\documentclass[11pt,a4paper]{amsart}
\usepackage[headings]{fullpage}
\usepackage{amssymb,amscd,hyperref}
\usepackage{amsthm}
\usepackage[alphabetic, nobysame]{amsrefs}
\usepackage[all,cmtip]{xy}
\usepackage{color}
\usepackage{xcolor}
\usepackage{tikz-cd} 
\numberwithin{equation}{section}
\newtheorem{thm}{Theorem}[section]
\newtheorem{lem}[thm]{Lemma}
\newtheorem{prop}[thm]{Proposition}
\newtheorem{cor}[thm]{Corollary}

\theoremstyle{definition}
\newtheorem{defn}[thm]{Definition}

\theoremstyle{remark}
\newtheorem{rem}[thm]{Remark}

\def\Z{\mathbb{Z}}

\def\F{\mathbb{F}}

\def\THR{\mathsf{THR}}
\def\ihh{\mathsf{iHH}}
\def\hr{\mathsf{HR}}
\def\hh{\mathsf{HH}}
\def\CH{\mathsf{CH}}

\def\Tor{\mathsf{Tor}}

\def\tamb{\text{-Tamb}}

\def\fix{\mathrm{fix}}
\def\Af{\underline R^\fix}
\def\Mf{\underline M^\fix}
\def\Nf{\underline N^\fix}
\newcommand{\und}[1]{{\underline{#1}}}
\def\norm{\mathsf{norm}}
\def\trace{\mathsf{tr}}
\def\tr{\mathsf{tr}}
\def\res{\mathsf{res}}

\def\ra{\rightarrow}

\def\cL{\mathcal{L}}

\def\HH{\mathsf{HH}}

\def\id{\mathrm{id}}
\def\op{\mathrm{op}}
\def\cone{\mathrm{cone}}

\begin{document}
\title[Reflexive \& involutive Hochschild homology as equivariant Loday constructions]{Reflexive homology and involutive Hochschild homology as
  equivariant Loday constructions}

\author{Ayelet Lindenstrauss}
\address{Mathematics Department, Indiana University, 831 East Third Street,
  Bloomington, IN 47405, USA}
\email{alindens@iu.edu}

\author{Birgit Richter}
\address{Fachbereich Mathematik der Universit\"at Hamburg,
  Bundesstra{\ss}e 55, 20146 Hamburg,  Germany}
\email{birgit.richter@uni-hamburg.de}

\date{\today}
\keywords{Real topological Hochschild homology, topological Hochschild 
  homology, involutive Hochschild homology, reflexive homology, crossed simplicial groups,
  equivariant Loday constructions}
\subjclass{55N91, 16E40}

\begin{abstract}
  For associative rings with anti-involution several homology theories exist,
  for instance reflexive homology as studied by Graves and 
involutive Hochschild homology defined by Fern\`andez-Val\`encia  and
Giansiracusa. We prove that the corresponding homology groups can be
identified with the homotopy groups of an equivariant Loday construction 
of the one-point compactification of the sign-representation evaluated at the
trivial orbit, if we assume that $2$ is invertible and if the underlying
abelian group of the ring is flat. We also show a relative version where we
consider an associative $k$-algebra with an anti-involution where $k$ is an
arbitrary commutative ground ring. 
  
\end{abstract}
\maketitle  

\section{Introduction}

In \cite{lrz-gloday} we introduced equivariant Loday
constructions. These generalize the non-equi\-variant Loday constructions, which include  (topological) Hochschild homology, higher order Hochschild homology
and torus homology.

In the equivariant case we fix a finite group $G$. The starting
point for a Loday construction is a $G$-commutative monoid in the 
sense of Hill and Hopkins \cite{hill-hopkins}. In the setting of
$G$-equivariant stable homotopy theory these are genuine
$G$-commutative ring spectra 
whereas in the algebraic setting of Mackey functors $G$-commutative
monoids are $G$-Tambara functors. Some equivariant homology theories
such as 
the twisted cyclic nerve of Blumberg-Gerhardt-Hill-Lawson \cite{bghl}
and Hesselholt-Madsen's 
Real topological Hochschild homology, $\THR$, \cite{dmpr} can be
identified with such equivariant Loday constructions \cite[\S
7]{lrz-gloday}. Here, $\THR$ is a homology theory for 
associative algebra spectra with anti-involution $A$ and we identified
this in the commutative case with the Loday construction over the one-point compactification of
the sign-representation,
$\THR(A) \simeq \cL^{C_2}_{S^\sigma} (A)$. 
In the following we will often refer to $S^\sigma$ as the \emph{flip circle}. 
In \cite[Proposition 6.1]{lrz-gloday}, we show that for 
any $G$-simplicial set $X$, if we apply the functor $\underline{\pi}_0$ levelwise to the equivariant
Loday construction of a connective genuine commutative $G$-algebra spectrum $A$ to obtain 
a simplicial $G$-Tambara functor, 
\[\underline{\pi}_0 (\cL_X^G(A)) \cong \cL^G_X(\underline{\pi}_0 (A)), \] 
 which relates $\cL^{C_2}_{S^\sigma}$ of $C_2$-Tambara functors to $\THR$.

There is an algebraic version of $\THR$, called Real Hochschild
homology \cite[Definition 6.15]{akgh} that takes associative algebras
with anti-involution as input. These are associative $k$-algebras for
some commutative ring $k$, such that $\tau(a) := \bar{a}$ satisfies
$\overline{ab} = \bar{b} \bar{a}$ and such that the $C_2$-action is
$k$-linear.  
In her thesis Chloe Lewis developed a B\"okstedt-type
spectral sequence for $\THR$ \cite{lewis} whose $E^2$-term consists of
Real Hochschild homology groups. Other homology theories for
associative algebras with anti-involution 
are reflexive homology \cite{graves} and involutive Hochschild
homology \cite{fvg}. Reflexive homology is a homology theory
associated to the crossed simplicial group that is 
the cyclic group of order two, $C_2 = \langle \tau\rangle$, in every
simplicial degree, where we do \emph{not} view $C_2$ as a constant
simplicial group, but let $\tau$ interact with the category $\Delta$ by
reversing the  
simplicial structure. Involutive Hochschild homology was defined in
\cite{fvg}; the corresponding cohomology theory was developed by Braun
\cite{braun}, who defined a 
cohomology theory for involutive $A_\infty$-algebras, motivated by
work of Costello on open Klein topological conformal field theories
\cite{costello}. We slightly generalize the definition in \cite{fvg} and work
over arbitrary commutative rings instead of fields.

We prove the identification of reflexive homology, $\hr_*$, with the
homotopy groups of an equivariant Loday construction in section
\ref{sec:hr} and the one for involutive Hochschild homology, $\ihh_*$,
in section \ref{sec:ihh}: 

\bigskip

\textbf{Theorem} (Theorems \ref{thm:lodayishr} and \ref{thm:ihh}) \, Assume that $R$
is a commutative ring with involution and that $2$ is invertible in $R$. If the underlying abelian group of $R$ is flat, then
\[ \ihh_*^\Z(R) \cong \pi_*(\cL^{C_2}_{S^\sigma}(\und{R}^\fix )(C_2/C_2)) \cong \hr^{+,\Z}_*(R,R). \] 
Here, $\cL^{C_2}_{S^\sigma}(\und{R}^\fix )$ is the $C_2$-equivariant
Loday construction of the fixed point Tambara functor for $R$,
$\Af$, for the representation sphere of the real sign-representation,
$S^\sigma$. This is a simplicial Tambara functor and 
$\cL^{C_2}_{S^\sigma}(\und{R}^\fix )(C_2/C_2)$ is its evaluation at
the trivial orbit $C_2/C_2$. This yields a simplicial abelian group
and we consider its homotopy groups.

If we work relative to a commutative ground ring $k$, then we obtain a corresponding result:

\bigskip

\textbf{Theorem} (Theorems \ref{thm:lodayishrk} and \ref{thm:ihhk}) \, 
Assume that $R$ is a commutative $k$-algebra with a $k$-linear
involution and that $2$ is 
invertible in $R$. If the underlying module of $R$ is flat over $k$,
then
\[ \ihh_*^k(R) \cong \pi_*(\cL^{C_2
    ,\und{k}^c}_{S^\sigma}(\und{R}^\fix)(C_2/C_2)) \cong 
  \hr^{+,k}_*(R,R).   \]  
In hindsight, this identifies the Loday construction over the $C_2$-Burnside
Tambara functor with the Loday construction relative to $\und{\Z}^c$ under the
above  assumptions (see Remark \ref{rem:znota}). We consider the examples of $\F_2$ and $\Z$ with
the trivial $C_2$-action in section  \ref{sec:exs} in order to
understand what happens if we drop these assumptions. There, the
homotopy groups of the Loday constructions
 \emph{differ} both from reflexive homology and from  involutive Hochschild homology.

The relationship to
the Real Hochschild homology of \cite{akgh} is more subtle: The latter
takes all dihedral groups into account and for $D_2 = C_2$ their definition
agrees with our equivariant Loday construction. We will establish a
full comparison also for the higher $D_{2m}$ with equivariant
Loday constructions in future work with Foling Zou (in preparation). 

\bigskip

In section \ref{sec:Green} we extend our results to the associative case,
where we consider
associative rings $R$ and associative $k$-algebras with anti-involution where
$k$ is an arbitrary commutative ground ring. Usually, one cannot form Loday
constructions without assuming commutativity, but the simplicial model of the
one-point compactification of the sign-representation consists of two glued
copies of the simplicial $1$-simplex with its intrinsic ordering, so we can
extend the definition to equivariant associative monoids in this case and we
get results generalizing the above theorems:

\bigskip

\textbf{Theorem} (Theorem \ref{thm:anti-invring}) \, Assume that $R$ 
is an associative ring with anti-involution and that $2$ is invertible in $R$. If the underlying abelian group of $R$ is flat, then
\[ \ihh_*^\Z(R) \cong \pi_*( \cL^{C_2}_{S^\sigma}(\und{R}^\fix )(C_2/C_2)) \cong \hr^{+,\Z}_*(R,R). \] 

If we work relative to a commutative ground ring $k$, then we obtain a corresponding result:

\bigskip

\textbf{Theorem} (Theorem \ref{thm:anti-invalg}) \, 
Assume that $A$ is an associative $k$-algebra with a $k$-linear
anti-involution and that $2$ is 
invertible in $A$. If the underlying module of $A$ is flat over $k$,
then
\[ \ihh_*^k(A) \cong \pi_*(\cL^{C_2 ,\und{k}^c}_{S^\sigma}(\und{A}^\fix)(C_2/C_2))
  \cong   \hr^{+,k}_*(A,A).   \]

The proofs, however, are different: In the case of an associative ring $R$ with
anti-involution the fixed point Mackey functor $\Af$ is
not an associative $C_2$-Green functor, so in particular it is not a
$C_2$-Tambara functor. But it has the structure of a discrete
$E_\sigma$-ring in the sense of \cite[\S 6.3]{akgh} and it is also 
a Hermitian Mackey
functor in the sense of \cite[Definition 1.1.1]{do}. Real Hochschild
homology \cite[\S 6.4]{akgh} and Real topological Hochschild homology
\cite[Example 2.4]{dmpr} are
defined for such objects, see also \cite[Proposition 7.1.1]{horev} for
the analogous result for factorization 
homology of the flip circle $S^\sigma$, so it is not surprising that
one can extend the Loday construction for $S^\sigma$ to fixed
point Mackey functors of rings with anti-involution. 
We endow the 
$C_2$-Mackey norm functor of $i_e^*\Af$ with the structure of an
associative Green functor so that  $\Af$ is a bimodule over it. 

\subsection*{Acknowledgements} 
We thank Sarah Whitehouse and Jack Davidson for helpful comments. 

The first author was supported by NSF grant DMS-2004300 at the beginning of working on this paper,
and by a grant from the Simons Foundation (\#359565, Ayelet Lindenstrauss) afterwards. The second author thanks the Department of Mathematics of
the Indiana University Bloomington for its hospitality and
support. An anonymous referee helped us to improve the paper with many 
insightful comments.  
\section{Equivariant Loday constructions}

We recall the basic facts about equivariant Loday constructions for
$G$-Tambara functors from \cite{lrz-gloday} for an arbitrary finite
group $G$. We work with unital rings. We assume that ring maps
preserve the unit, and that the unit acts as the identity on any module over the
ring.

We consider simplicial $G$-sets $X$ that are
finite in every degree and call them finite simplicial
$G$-sets. For every $G$-Tambara functor $\und{T}$ and every such $X$
the simplicial $G$-Tambara functor 
$\cL_X^G(\und{T})$ is the $G$-Loday construction for $X$ and
$\und{T}$. In simplicial degree $n$ we define:  
\[ \cL_X^G(\und{T})_n = X_n \otimes \und{T}\]
where the formation of the tensor product with the finite $G$-set $X_n$ uses
the fact that $G$-Tambara functors are the $G$-commutative monoids in
the setting of $G$-Mackey functors. This was proved by Mazur
\cite{mazur} for cyclic $p$-groups for a prime $p$ and by Hoyer
\cite{hoyer} in the case of a general finite group $G$. As they show
that the construction $X_n \otimes \und{T}$ is functorial in $X_n$,
the Loday construction is well-defined.

The above tensor can be made explicit. Every finite $G$-set is
isomorphic to a finite disjoint union of orbits and Mazur and Hoyer
show that for an orbit $G/H$ we obtain
\[ G/H \otimes \und{T} \cong  N_H^Gi_H^*\und{T}. \]
Here, $i_H^*$ restricts a $G$-Tambara functor to $H$, so for a finite
$H$-set $Y$, $i_H^*\und{T}(Y) := \und{T}(G \times_H Y)$. The restriction functor
has the norm functor $N_H^G$ as a left adjoint. A disjoint union of $G$-sets
$X, X'$, $X \sqcup X'$ is sent to
\[ (X \sqcup X') \otimes \und{T} \cong (X \otimes \und{T}) \Box
  (X' \otimes \und{T}), \] 
so this determines every $X_n \otimes \und{T}$ up to isomorphism. 

\section{Basic results about fixed point Tambara functors}
In this section we study $C_2$-Mackey and Tambara functors.
If $L$ is an abelian group with involution 
$a\mapsto \bar a$, there is a $C_2$-Mackey functor $\und{L}^\fix$
given by 
\[  \und{L}^\fix =
  \begin{cases}
   L^{C_2} & \text{ at } C_2/C_2, \\
    L & \text{ at } C_2/e,
  \end{cases}
  \] 
where $\trace(a)=a+\bar a$ for all $a\in L$ and $\res(a)=a$ for all
$a\in L^{C_2}$. 
If $R$ is a commutative ring whose multiplication is compatible with
its involution, then we can define $\norm(a)=a\bar a$ and get a
$C_2$-Tambara functor structure on $\und{R}^\fix$. 

\begin{rem}\label{adjoint}
Note that for an arbitrary finite group $G$, the functor that sends a
commutative $G$-ring $T$ to its fixed point $G$-Tambara functor is right
adjoint to the functor that takes a $G$-Tambara functor $\und{R}$ to
its underlying commutative $G$-ring $\und{R}(G/e)$ (see for instance
\cite[Lemma 2.9]{ssw}). This is completely analogous to the situation in Mackey functors, where the
functor that sends an abelian group $A$ with $G$-action to its fixed
point $G$-Mackey functor $\und{A}^\fix$  is right adjoint to the functor that takes a $G$-Mackey functor $\und{M}$ and sends it to the abelian group with $G$-action $\und{M}(G/e)$.
\end{rem} 

For $G=C_2$ a description of the counit map
$\varrho_T
\colon \und{T}^\fix(C_2/e) \ra T$ and the unit map $\eta_\und{R} \colon \und{R} \ra
\und{\und{R}(C_2/e)}^\fix$ of this adjunction are very straightforward (both in the Mackey case and in the Tambara case): The counit $\varrho_T \colon  \und{T}^\fix(C_2/e) = T \ra T$ is the identity
map.

At the
free orbit
\[ \eta_\und{R}(C_2/e) \colon \und{R}(C_2/e) \rightarrow
  \und{\und{R}(C_2/e)}^\fix(C_2/e) = \und{R}(C_2/e)\]
is the identity map and at the trivial orbit $C_2/C_2$
\[ \eta_\und{R}(C_2/C_2) \colon \und{R}(C_2/C_2) \rightarrow
  \und{\und{R}(C_2/e)}^\fix(C_2/C_2) = \und{R}(C_2/e)^{C_2}\]
is the restriction map.

For an arbitrary finite group $G$ this adjunction ensures that morphisms of
commutative $G$-rings $f \colon R \ra T$ are in bijective
correspondence with  morphisms of
$G$-Tambara functors $\und{f} \colon \Af \ra \und{T}^\fix$, because
$\Af(G/e) = R$.

In
particular if $k$ is a
commutative ring and if $f \colon R \ra T$ is a morphism of
commutative $k$-algebras with involution, then we get a commutative
diagram of $C_2$-Tambara functors 
\[ \xymatrix{ & \und{k}^c \ar[dr]^{\und{i_T}} \ar[dl]_{\und{i_R}}&  \\
    \Af \ar[rr]^{\und{f}} & & \und{T}^\fix}\]
where $i_R$ and $i_T$ are the unit maps of $R$ and $T$. Here
$\und{k}^c$ denotes the fixed point Tambara for the trivial
$C_2$-action, called the constant Tambara functor.

For a set $Y$ we denote by $\Z\{Y\}$ the free abelian group
generated by $Y$ and for $y \in Y$ the corresponding generator in
$\Z\{Y\}$ is $\{y\}$. When $R$ is a commutative ring with involution
the norm restriction of $\und{R}^\fix$ is given by  
 
\[     N^{C_2}_ei^{*}_e \Af = N_e^{C_2}R =
   \begin{cases}
 ( \Z\{ R\} \oplus (R\otimes R)/C_2)/\mathrm{TR} & \text{ at } C_2/C_2 \\
   R\otimes R & \text{ at } C_2/e,
  \end{cases}
  \] 
  where $C_2$ acts on $R\otimes R$ via $\tau(a\otimes b) = \bar
  b\otimes \bar a$, $[a\otimes b]$ denotes the equivalence class of
  $a\otimes b$ in $ (R\otimes R)/C_2 $,  and Tambara Reciprocity, TR,
  identifies  
  $\{a+b\} \sim \{a\} +\{b\} + [a\otimes \bar b]$.  Here
  $\norm(a\otimes b)=\{a\bar b\}$ and $\trace(a\otimes b)=[a\otimes b]$ for
  all $a\otimes b\in R\otimes R$, $\res(\{a\})=a\otimes \bar a$, and
  $\res([a\otimes b])=a\otimes b+\bar b\otimes \bar a$  (see \cite{hm}
  for properties of the norm functor, especially Fact 4.4 in loc.~cit.).

  \begin{lem}\label{lem:fixprod}
Assume that $M$ and $N$ are two abelian groups with involution and
assume that $2$ is invertible in $M$ or in $N$. Then there is an equivalence of
$C_2$-Mackey functors  
\[\Mf\Box\Nf \cong \und{(M\otimes N)}^\fix\]
which is natural in $M$ and $N$. Here $C_2$ acts on $M\otimes N$ by
the diagonal action.  If, in addition, $M$ and $N$ are both
commutative rings with 
involution, then $\Mf$, $\Nf$, and $\und{(M\otimes N)}^\fix$ are
$C_2$-Tambara functors, and the above equivalence is an equivalence of
$C_2$-Tambara functors. 
\end{lem}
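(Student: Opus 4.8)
The plan is to compute both sides orbit by orbit and read off the isomorphism. At the free orbit there is nothing to do: $(\Mf\Box\Nf)(C_2/e) = M\otimes N$ with the diagonal action $\tau(a\otimes b) = \bar a\otimes\bar b$, which is exactly $\und{(M\otimes N)}^\fix$ at $C_2/e$. At $C_2/C_2$ I would use a standard presentation of the box product of $C_2$-Mackey functors (as in \cite{mazur}), which for fixed-point Mackey functors reads
\[ (\Mf\Box\Nf)(C_2/C_2)\;\cong\;\bigl(M^{C_2}\otimes N^{C_2}\ \oplus\ (M\otimes N)_{C_2}\bigr)\big/\mathrm{FR}, \]
where $\mathrm{FR}$ (Frobenius reciprocity) identifies $(a+\bar a)\otimes n$ in the first summand with the class $[a\otimes n]$ in the second, and symmetrically $m\otimes(b+\bar b)$ with $[m\otimes b]$; the transfer out of $C_2/e$ is $a\otimes b\mapsto[a\otimes b]$, and the restriction sends the first summand to $M\otimes N$ by the evident map and the second by the norm $[a\otimes b]\mapsto a\otimes b+\bar a\otimes\bar b$. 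I would then define $\Psi\colon\Mf\Box\Nf\to\und{(M\otimes N)}^\fix$ to be the identity at $C_2/e$ and, at $C_2/C_2$, to send $M^{C_2}\otimes N^{C_2}$ into $(M\otimes N)^{C_2}$ by $m\otimes n\mapsto m\otimes n$ and $(M\otimes N)_{C_2}$ by $[x]\mapsto x+\tau x$. That $\Psi$ respects $\mathrm{FR}$ and commutes with the restrictions and transfers on both sides is a short direct check, and naturality in $M$ and $N$ is clear from the formulas.

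The crux is that $\Psi$ is an isomorphism, and this is precisely where the hypothesis that $2$ be invertible in $M$ or $N$ — hence in $M\otimes N$, and therefore in every group above — is used. First, the summand $M^{C_2}\otimes N^{C_2}$ is redundant: for $m\in M^{C_2}$ one has $m+\bar m = 2m$, so in the quotient $m\otimes n = \frac12\bigl((m+\bar m)\otimes n\bigr) = \frac12[m\otimes n]$ lies in the image of the transfer; thus the transfer $(M\otimes N)_{C_2}\to(\Mf\Box\Nf)(C_2/C_2)$ is surjective. Second, the composite of this transfer with $\Psi$ is the norm $(M\otimes N)_{C_2}\to(M\otimes N)^{C_2}$, $[x]\mapsto x+\tau x$, which is an isomorphism once $2$ is invertible (with inverse $y\mapsto\frac12[y]$). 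A surjection whose post-composition with $\Psi$ is an isomorphism forces $\Psi$ to be one as well. The main obstacle is simply to carry out the box-product bookkeeping cleanly and to see that without $2$ invertible neither the collapse of the first summand nor the invertibility of the norm need hold, so the hypothesis is genuinely needed.

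For the Tambara statement, suppose $M$ and $N$ are commutative rings with involution, so that $\Mf$, $\Nf$, and $\und{(M\otimes N)}^\fix$ carry the Tambara structures described before the lemma and $\Mf\Box\Nf$ carries the Tambara structure it has as the coproduct of $\Mf$ and $\Nf$ in $C_2$-Tambara functors, via the Mazur–Hoyer identification of Tambara functors with the $C_2$-commutative monoids for $\Box$. It remains to upgrade $\Psi$ to a map of Tambara functors. At $C_2/e$ it is the identity, hence a ring map; at $C_2/C_2$ one checks it is a ring map by transporting the product along the surjection from $(M\otimes N)_{C_2}$ and using the projection formula $\trace(x)\cdot\trace(y) = \trace\bigl(x\cdot\res\,\trace(y)\bigr)$, which makes the transported product $[x]\cdot[y] = [x(y+\tau y)]$ and sends it under the norm to $(x+\tau x)(y+\tau y)$. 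Finally, $\Psi$ commutes with the norm essentially for free: the Tambara axiom gives $\res\circ\norm(x) = x\bar x$ on both sides, the restriction $\res\colon(M\otimes N)^{C_2}\hookrightarrow M\otimes N$ is injective, and $\Psi$ commutes with $\res$ and is the identity at $C_2/e$; hence $\Psi(\norm(x))$ and $\norm(x)$ have equal restrictions and therefore agree.
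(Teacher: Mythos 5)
Your proof is correct and follows essentially the same route as the paper's: the same presentation of the box product at $C_2/C_2$, Frobenius reciprocity plus invertibility of $2$ to absorb the $M^{C_2}\otimes N^{C_2}$ summand, and the norm map identifying $(M\otimes N)_{C_2}$ with $(M\otimes N)^{C_2}$; your surjectivity-plus-norm-isomorphism argument merely repackages the paper's check that the Frobenius relations impose no further relations on the coinvariants. The multiplicative part also matches up to cosmetic differences (the paper derives the product of transfers via injectivity of $\res$ where you invoke the projection formula, and both treat the norm by comparing restrictions), so no genuinely different ideas are involved.
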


\begin{proof}
We have by the definition of the box product (see e.g.~\cite[Definition 3.1]{hm})
\[
\Mf\Box\Nf =
\begin{cases}
[M^{C_2} \otimes N^{C_2} \oplus (M\otimes N)/C_2]/\text{FR} &\text{at }C_2/C_2\\
M\otimes N &\text{at }C_2/e,
\end{cases}
\]
and we map it to $\und{(M\otimes N)}^\fix$ by using the Mackey or Tambara functor map corresponding to the identity map on the free orbit $C_2/e$ via the adjunction in Remark \ref{adjoint}.   Then our map will consist of the identity on the free orbit and $\res$ on the trivial orbit, which will land in $(M\otimes N)^{C_2}$.  

On the trivial orbit the map $\res$ includes $M^{C_2} \otimes
N^{C_2}$ into $(M\otimes N)^{C_2}$ and maps $(M\otimes N)/C_2$ to $(M\otimes N)^{C_2}$ by the map $[m\otimes n]\mapsto m\otimes n +\bar m\otimes\bar n$ (which is an equivalence since $2$ is invertible in $M\otimes N$).

Then $\res$ maps $\Mf\Box\Nf (C_2/C_2)$ surjectively onto $ (M\otimes
N)^{C_2}$ because its restriction to the second summand does, and it
is injective because its restriction to the second summand is
injective and Frobenius Reciprocity allows us to identify the first
summand into the second one: if $2$ is invertible in $M$, any $m\in
M^{C_2}$ is equal to $\trace (m/2)$ so $m\otimes n$ is identified with
$[m/2\otimes n]$, and if $2$ is invertible in $N$ similarly $m\otimes
n$ is identified with $[m\otimes n/2]$.
\end{proof}

\begin{lem}\label{lem:prodnorm}
If $R$ is a  commutative ring with involution in which $2$ is
invertible and if $M$ is an abelian group with an involution, then there
is an equivalence of $C_2$-Mackey functors 
\begin{equation} \label{eq:normmodule} (N^{C_2}_ei_e^*\Af) \Box \Mf =  (N^{C_2}_eR) \Box \Mf \cong \und{(R \otimes  R \otimes
    M)}^\fix, \end{equation}
which is natural in $M$ and $R$. Here, $C_2$ acts on $R \otimes R
\otimes M$ by $\tau(a\otimes b \otimes m)=\bar b\otimes \bar a\otimes
\bar m$. 
If $M$ is also a commutative ring with involution, then
\eqref{eq:normmodule}  is an equivalence of $C_2$-Tambara functors. 
\end{lem}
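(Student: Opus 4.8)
The plan is to reduce everything to the two orbits, since a $C_2$-Mackey functor is determined by its values at $C_2/e$ and $C_2/C_2$ together with $\res$ and $\tr$. Write $\und S := N^{C_2}_e i^{*}_e\Af$, so that $\und S(C_2/e) = R\otimes R$ with $\tau(a\otimes b) = \bar b\otimes\bar a$ and $\und S(C_2/C_2) = (\Z\{R\}\oplus (R\otimes R)/C_2)/\mathrm{TR}$. At the free orbit the explicit description of the box product gives $(\und S\Box\Mf)(C_2/e) = \und S(C_2/e)\otimes M = R\otimes R\otimes M$ with the diagonal action $\tau(a\otimes b\otimes m)=\bar b\otimes\bar a\otimes\bar m$, which is exactly $\und{(R\otimes R\otimes M)}^\fix$ at $C_2/e$, so there is nothing to prove there. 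The whole content is therefore at $C_2/C_2$, and the organizing observation is that, once $2$ is invertible in $R$ and hence in the $R\otimes R$-module $R\otimes R\otimes M$, the canonical restriction
\[ \res\colon (\und S\Box\Mf)(C_2/C_2)\longrightarrow \bigl(R\otimes R\otimes M\bigr)^{C_2} = \und{(R\otimes R\otimes M)}^\fix(C_2/C_2) \]
is an isomorphism; this is precisely the asserted identification, and the compatibility with $\res$ and $\tr$ comes for free.

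To prove this I would use the presentation of the box product recalled in the proof of Lemma~\ref{lem:fixprod}: $(\und S\Box\Mf)(C_2/C_2)$ is the quotient by Frobenius reciprocity of $\und S(C_2/C_2)\otimes_\Z M^{C_2}\ \oplus\ (R\otimes R\otimes M)/C_2$, and the transfer of the box product is the evident map $\overline{\tr}\colon (R\otimes R\otimes M)/C_2\to(\und S\Box\Mf)(C_2/C_2)$ onto the second summand. The first step is to show $\overline{\tr}$ is surjective. For the generators $[a\otimes b]\otimes n$ with $[a\otimes b]\in(R\otimes R)/C_2\subseteq\und S(C_2/C_2)$ and $n\in M^{C_2}$ this is immediate: as $[a\otimes b]=\tr_{\und S}(a\otimes b)$, Frobenius reciprocity identifies $[a\otimes b]\otimes n$ with the transfer of $(a\otimes b)\otimes n$. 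For the norm generators $\{a\}\otimes n$ I would use Tambara reciprocity in $\und S$ to write $\{a\}=2\{a/2\}+\frac14[a\otimes\bar a]$ — which is allowed precisely because $a/2\in R$ and $(R\otimes R)/C_2$ is $2$-divisible — so $\{a\}\otimes n=2\bigl(\{a/2\}\otimes n\bigr)+\frac14\bigl([a\otimes\bar a]\otimes n\bigr)$, in which the second term is a transfer by the previous case, and, since $2n=n+\bar n=\tr_{\Mf}(n)$ for $n\in M^{C_2}$, the first term equals $\{a/2\}\otimes\tr_{\Mf}(n)$, which Frobenius reciprocity rewrites as the transfer of $\res_{\und S}(\{a/2\})\otimes n=\frac14(a\otimes\bar a)\otimes n$. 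Hence $\{a\}\otimes n$ is a transfer too, and $\overline{\tr}$ is onto.

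It then suffices to observe that the composite $(R\otimes R\otimes M)/C_2\xrightarrow{\overline{\tr}}(\und S\Box\Mf)(C_2/C_2)\xrightarrow{\res}(R\otimes R\otimes M)^{C_2}$ is induced by $\res\circ\tr=1+\tau$, i.e.\ it is $[x]\mapsto x+\tau(x)$, and this is an isomorphism when $2$ is invertible: it is onto because $x=\res\bigl(\frac12\tr(x)\bigr)$ for $x$ fixed, and injective on coinvariants because $x+\tau(x)=0$ forces $x=(1-\tau)(x/2)$, so $[x]=0$. A surjection whose composite with a further map is an isomorphism is itself an isomorphism, and then so is that further map; thus $\overline{\tr}$ and $\res$ are isomorphisms, giving $(\und S\Box\Mf)(C_2/C_2)\cong(R\otimes R\otimes M)^{C_2}$ compatibly with $\res$ and $\tr$. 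Naturality in $R$ and $M$ is clear since every construction used is natural. When $M$ is also a commutative ring with involution, $\Mf$ and $\und S$ are $C_2$-Tambara functors and $\und S\Box\Mf$ is their coproduct, hence a $C_2$-Tambara functor; since $\res$ is always a ring homomorphism and has just been shown to be bijective, the identification is automatically multiplicative, and since $\res\circ\norm$ is the conjugation norm $x\mapsto x\,\tau(x)=x\bar x$ on both $\und S\Box\Mf$ and $\und{(R\otimes R\otimes M)}^\fix$ while $\res$ is injective, it respects the norm as well, so the equivalence is one of $C_2$-Tambara functors.

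I expect the main obstacle to be the absorption of the norm generators $\{a\}\otimes n$ in the second paragraph: this is precisely where the hypothesis "$2$ invertible in $R$" — as opposed to "$2$ invertible in $M$", as in Lemma~\ref{lem:fixprod} — enters, namely through halving $a$ inside the norm symbol together with the fact that $2n$ is a transfer for every $n\in M^{C_2}$. In particular one cannot simply identify $N^{C_2}_e i^{*}_e\Af$ with $\und{(R\otimes R)}^\fix$ first and then invoke Lemma~\ref{lem:fixprod}, since the symbols $\{a\}$ are not identified with transfers inside $N^{C_2}_e i^{*}_e\Af$ itself, even after inverting $2$; it is the passage to $\und S\Box\Mf$ that makes them collapse.
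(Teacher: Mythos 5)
Your argument is correct, and while it lands on the same underlying map as the paper (at $C_2/C_2$ your identification is exactly restriction, which on the three kinds of generators gives $\{a\}\otimes m\mapsto a\otimes\bar a\otimes m$, $[a\otimes b]\otimes m\mapsto (a\otimes b+\bar b\otimes\bar a)\otimes m$, $[a\otimes b\otimes m]\mapsto a\otimes b\otimes m+\bar b\otimes\bar a\otimes\bar m$), the route to proving it is an isomorphism is organized differently. The paper writes down that map summand by summand, checks by hand that it kills the Tambara and Frobenius reciprocity relations, and then argues that the existence of this extension shows FR imposes no new relations on the coinvariants summand, on which the map is the usual norm isomorphism $(R\otimes R\otimes M)/C_2\cong(R\otimes R\otimes M)^{C_2}$; the Mackey compatibility is quoted as analogous to Lemma \ref{lem:fixprod} and the Tambara compatibility is verified by explicit product and norm computations. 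You instead prove surjectivity of the transfer out of the coinvariants summand — with the key step being the TR-halving $\{a\}=2\{a/2\}+[a/2\otimes\overline{a/2}]$ together with $2n=\tr(n)$ for $n\in M^{C_2}$, which is exactly where ``$2$ invertible in $R$ rather than in $M$'' enters and which the paper leaves implicit when it asserts that FR identifies the first two summands into the last — and then use that $\res\circ\tr=1+\tau$ is an isomorphism from coinvariants to invariants to get injectivity of both $\tr$ and $\res$ for free, so you never have to address whether FR creates extra relations. Your treatment of the multiplicative and norm structure is also more formal than the paper's: since the identification is $\res$, which is a ring map with image in the fixed subring and satisfies $\res\circ\norm(x)=x\cdot\tau(x)$ in any $C_2$-Tambara functor, compatibility is automatic, whereas the paper computes products and norms explicitly. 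What the paper's explicit generator-level formulas buy is that they are reused later (in Theorem \ref{thm:barfix}) to see that the identifications are compatible with the simplicial structure maps; your argument yields the same formulas a posteriori, so nothing downstream is lost.
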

\begin{proof}
Applying the formula for the box product (\cite[Definition 3.1]{hm}) to the formula for the norm yields
\[     (N^{C_2}_eR) \Box \Mf =
   \begin{cases}
\bigl( ( \Z\{R\} \oplus (R\otimes R)/C_2 ) /\mathrm{TR}  \otimes M^{C_2} \oplus (R\otimes R \otimes M)/C_2 \bigr)/\mathrm{FR}& \text{ at } C_2/C_2 \\
   R \otimes R \otimes M& \text{ at } C_2/e,
  \end{cases}
  \] 
 and again we send it to $\und{(R \otimes  R \otimes
    M)}^\fix$ by the Mackey or Tambara functor map that corresponds via the adjunction of Remark \ref{adjoint} to the identity on the free orbit.  So the resulting map is the identity on the free orbit  and $\res$ on the trivial orbit, which will land in $(R \otimes R \otimes M)^{C_2}$.
 
Again, since $2$ is invertible the restriction of $\res$ to $(R\otimes R \otimes M)/C_2$, which sends $[a\otimes b\otimes m]  \mapsto a\otimes b \otimes m + \bar b\otimes
                        \bar a\otimes \bar m $
is an isomorphism $(R\otimes R \otimes M)/C_2 \to (R\otimes R \otimes M)^{C_2} $.
Frobenius Reciprocity identifies 
$\trace(a\otimes b)\otimes m$ with $[a\otimes b\otimes
  \res(m)]\in (R\otimes R \otimes M)/C_2$ for all $a, b \in R$, $m\in M^{C_2}$.
  It also identifies  $\{ a\} \otimes \trace(m)$ with $[a\otimes \bar a \otimes m ] \in (R\otimes R \otimes M)/C_2$ for all $a\in R$, $m\in M$, and since $2$ is invertible, $\trace(M) =M^{C_2}$ (any $m\in M^{C_2}$ is equal to $\trace (m/2)$).
  So in fact, 
  \[(N^{C_2}_eR) \Box \Mf (C_2/C_2) \cong (R \otimes R \otimes
    M)/{C_2} \cong (R \otimes R \otimes M)^{C_2} \] 
  and our map between  $(N^{C_2}_eR)\Box \Mf $ and $\und{(R \otimes
    R\otimes  M)}^\fix$ is an isomorphism at both levels.   \end{proof}

Note also that for a map of commutative $C_2$-rings
$f\colon  R \to C$ where $2$ is invertible in both rings, the sequence of maps 
\[N^{C_2}_ei^{*}_e \Af \Box \und{C}^\fix \to \Af\Box \und{C}^\fix
  \cong \und{(R \otimes
    C)}^\fix \to \und{(C\otimes C)}^\fix \to \und{C}^\fix \] 
that is given by the counit of the $(N_e^{C_2},i^*_e)$-adjunction, the
identification of 
Lemma \ref{lem:fixprod}, the map $f$, and multiplication, the
corresponding map $\und{(R \otimes R \otimes C)}^\fix 
\to \und{C}^\fix$ is given by the multiplication in $R$ and the
$R$-module structure on $C$ induced by the map
$f$.

\section{Working relative to a commutative ground ring}

In \cite[\S 8]{lrz-gloday} we defined a $G$-equivariant Loday
construction relative to  a map of $G$-Tambara functors $\und{k} \ra
\und{R}$. In general, this construction is rather involved because its
building blocks are relative norm-restriction terms: For an orbit
$G/H$ we set 
\begin{equation} \label{eq:relnorm} G/H \otimes_{\und{k}} \und{R} := (G/H \otimes \und{R} ) \Box_{(G/H \otimes{\und{k}} )}  \und{k}  =N_H^Gi_H^*(\und{R})
  \Box_{N_H^Gi_H^*(\und{k})} \und{k} =: N_H^{G,\und{k}}i_H^*(\und{R}). \end{equation}
This uses the naturality of $N_H^Gi_H^*(-)$ and the counit
$N_H^Gi_H^*(\und{k}) \ra \und{k}$ of the norm-restriction adjunction.

In \cite{lrz-gloday} we define the relative equivariant Loday
construction for any finite simplicial $G$-set  $X$: 
\[ \cL^{G, \und{k}}_X(\und{R}) := \cL^G_X (\und{R})
  \Box_{\cL^G_X(\und {k})} \und{k}.\] 

If we consider fixed point $C_2$-Tambara functors $\und{R}^\fix$ and  if we work
relative to a constant Tambara functor $\und{k}^c$, then these terms simplify
drastically.  Recall
that we denote by 
$\und{k}^c$ the constant Tambara functor which is the fixed point
Tambara functor for the trivial $C_2$-action.

The purpose of this section is to relate the relative Loday
construction of $C_2$-fixed point Tambara functors to the fixed point
Tambara functor of the non-equivariant relative Loday construction. To
that end we prove two crucial auxiliary results. 

\begin{prop} \label{prop:orbits} 
  Let $k \ra R$ be a map of commutative $C_2$-rings where $C_2$ acts
  trivially on $k$ and  $2$ is invertible in $R$.
  Then 
\[  N_e^{C_2,\und{k}^c}i_e^*(\und{R}^\fix)  \cong \und{(R \otimes_k R)}^\fix, \] 
where $C_2$ acts on $R \otimes_k R$ by $\tau(a\otimes b)=\bar b
\otimes \bar a$. 
\end{prop}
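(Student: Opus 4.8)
The plan is to compute the relative norm-restriction $N_e^{C_2,\und{k}^c}i_e^*(\und{R}^\fix)$ directly from its defining formula \eqref{eq:relnorm}, namely
\[ N_e^{C_2,\und{k}^c}i_e^*(\und{R}^\fix) = N_e^{C_2}i_e^*(\und{R}^\fix) \Box_{N_e^{C_2}i_e^*(\und{k}^c)} \und{k}^c, \]
and to identify each of the three ingredients. The middle term $N_e^{C_2}i_e^*(\und{k}^c)$ is the case $R = k$ with trivial involution of the explicit norm-restriction formula recalled before Lemma \ref{lem:fixprod}; it is the $C_2$-Burnside-type Tambara functor with $(\Z\{k\}\oplus (k\otimes k)/C_2)/\mathrm{TR}$ at $C_2/C_2$ and $k\otimes k$ (diagonal swap action) at $C_2/e$. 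The augmentation $N_e^{C_2}i_e^*(\und{k}^c)\to \und{k}^c$ is multiplication, sending $\{x\}\mapsto x$ and $[x\otimes y]\mapsto xy$ at the top level and $x\otimes y\mapsto xy$ at the free level.

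Next I would unwind the relative box product. At the free orbit $C_2/e$ everything is non-equivariant: the left factor contributes $R\otimes_\Z R$, the base contributes $k\otimes_\Z k$, and tensoring down over $k\otimes k$ gives $R\otimes_k R$, which is exactly $\und{(R\otimes_k R)}^\fix$ at $C_2/e$, with the swap action $\tau(a\otimes b)=\bar b\otimes\bar a$ matching. At $C_2/C_2$ the computation is the genuine content: one must run the box-product-over-a-base formula for Tambara functors, which involves the top-level groups glued by Frobenius and Tambara reciprocity together with the extra relations coming from coequalizing the two $N_e^{C_2}i_e^*(\und{k}^c)$-module structures. Here the hypothesis that $2$ is invertible in $R$ is what makes this tractable: as in Lemmas \ref{lem:fixprod} and \ref{lem:prodnorm}, invertibility of $2$ forces the free-orbit-generated summands $\Z\{-\}$ to collapse into the coinvariant summand (via $\{a\}\sim \frac12[a\otimes a]$ up to the TR relations), and the $C_2$-fixed points and $C_2$-coinvariants of $R\otimes_k R$ become identified via $x\mapsto[x/2]$, $[x]\mapsto x+\bar x$. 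So I expect the top level to reduce to $(R\otimes_k R)/C_2\cong (R\otimes_k R)^{C_2}$, i.e.\ to $\und{(R\otimes_k R)}^\fix(C_2/C_2)$.

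Concretely, I would write down a candidate map from each summand of $(N_e^{C_2}i_e^*\Af \Box_{N_e^{C_2}i_e^*\und{k}^c}\und{k}^c)(C_2/C_2)$ to $(R\otimes_k R)^{C_2}$ — on the $\Z\{R\}$-part send $\{a\}\otimes 1\mapsto a\otimes\bar a$, on the $(R\otimes R)/C_2$-part send $[a\otimes b]\otimes 1\mapsto a\otimes b+\bar b\otimes\bar a$ (tensor over $k$), mimicking the maps in the proof of Lemma \ref{lem:prodnorm} — then check that all three families of relations (Tambara reciprocity within the left factor, Frobenius reciprocity, and the $k\otimes k$-balancing relations of the coequalizer) are respected and impose no further relations, so the map is an isomorphism of abelian groups. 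The Mackey structure ($\res$, $\trace$, and the $C_2$-action) is then checked on the surviving coinvariant classes exactly as in the previous two lemmas. Finally, when $R$ (and hence $R\otimes_k R$) is a commutative ring, one verifies that the product $[a\otimes b]\cdot[c\otimes d]=[ac\otimes bd]+[a\bar d\otimes b\bar c]$ and the norm $N(a\otimes b)=\{a\bar b\}$ are preserved, upgrading to an equivalence of $C_2$-Tambara functors; naturality in $k$ and $R$ is clear from the explicit formulas. The main obstacle is the bookkeeping at $C_2/C_2$: correctly setting up the coequalizer defining $\Box_{N_e^{C_2}i_e^*\und{k}^c}$ and verifying that the three overlapping systems of reciprocity and balancing relations collapse the two free-orbit-type summands onto the coinvariants without either over-collapsing or leaving residual relations — everything hinges on using $2\in R^\times$ at exactly the right places, so I would be careful to track where that hypothesis is genuinely needed.
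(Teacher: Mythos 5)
Your route is genuinely different from the paper's, and it is worth saying what the paper does instead: it never unwinds the top level of the coequalizer by generators and relations. It uses that $N_e^{C_2}$ and $i_e^*$ are strong symmetric monoidal together with Lemmas \ref{lem:fixprod} and \ref{lem:prodnorm} to rewrite both corners of the coequalizer diagram as fixed-point Tambara functors $\und{((R\otimes k)\otimes(R\otimes k)\otimes k)}^\fix \rightrightarrows \und{(R\otimes R\otimes k)}^\fix$, and then proves that on commutative $C_2$-rings with $2$ invertible the fixed-point functor $(-)^\fix$ is \emph{left adjoint} to evaluation at $C_2/e$ (the top-level map being $g(x)=\frac{1}{2}\tr f(\res(x))$, checked to respect the ring structure, $\res$, $\tr$ and $\norm$). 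Since a left adjoint preserves coequalizers, the claim reduces to the ring-level identity $R\otimes_k R\cong (R\otimes R)\otimes_{k\otimes k}k$. That adjunction argument exists precisely to avoid the bookkeeping you identify as your main obstacle.

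The gap in your proposal is that this bookkeeping is exactly where the content lies, and the one mechanism you offer for it is incorrect as stated: Tambara reciprocity alone does \emph{not} collapse the $\Z\{R\}$ summand into the coinvariants, even with $2$ invertible (``$\{a\}\sim\frac12[a\otimes a]$ up to the TR relations'' would make $N_e^{C_2}i_e^*\Af$ itself a fixed-point functor, which the paper's remarks explicitly rule out: $\und{A}[\frac{1}{2}](C_2/C_2)=\Z[\frac{1}{2}][s]/(s^2-s)\neq\Z[\frac{1}{2}]$). The collapse only happens after boxing against the second factor, by combining TR with Frobenius reciprocity: $\{a\}=2\{a/2\}+[a/2\otimes\overline{a/2}]$ and $2\{b\}\otimes x=\{b\}\otimes\tr(x)\sim[\res\{b\}\otimes x]$. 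Note also that this must run through $2$ being invertible in $R$: the Proposition does not assume $2$ invertible in $k$, so elements of $k$ need not be traces in $\und{k}^c$ and you cannot collapse from the $\und{k}^c$ side. Beyond this, your plan asserts but does not verify the decisive point that the three systems of relations (TR, FR, and the $N_e^{C_2}i_e^*\und{k}^c$-balancing relations) identify the top level with $(R\otimes_k R)_{C_2}\cong (R\otimes_k R)^{C_2}$ and impose nothing more than $k$-bilinearity inside the coinvariant summand; to complete the direct argument you would have to check your candidate map on all generators and all three kinds of relations (as is done for two of them in the proof of Lemma \ref{lem:prodnorm}) and establish surjectivity via the collapse above. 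As written, that step is missing, so the proof is not yet complete, although the direct approach could in principle be pushed through.
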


\begin{proof}  
  The relative box-product $ N_e^{C_2,\und{k}^c}i_e^*(\und{R}^\fix) = N_e^{C_2}i_e^*(\und{R}^\fix
  )\Box_{N_e^{C_2}i_e^*(\und{k}^c)} \und{k}^c$ is the coequalizer of
  the diagram 
  \[
\xymatrix@1{ N_e^{C_2}i_e^*(\und{R}^\fix) \Box N_e^{C_2}i_e^*\und{k}^c
  \Box \und{k}^c \ar@<0.7ex>[rr]^(0.55){\nu \Box \id}
  \ar@<-0.7ex>[rr]_(0.55){\id \Box \nu'} & &
  N_e^{C_2}i_e^*(\und{R}^\fix) \Box \und{k}^c} \] 
where $\nu$ is the composite of the map $N_e^{C_2}i_e^*\und{k}^c \ra
N_e^{C_2}i_e^*(\und{R}^\fix)$ and the multiplication map of
$N_e^{C_2}i_e^*(\und{R}^\fix)$ and $\nu'$ uses the counit of the
adjunction $\varepsilon \colon N_e^{C_2}i_e^*\und{k}^c \ra \und{k}^c$
and the multiplication in $\und{k}^c$. As $\und{k}^c$ is the
fixed point Tambara functor for the trivial action we can use the fact
that $i_e^*$ and $N_e^{C_2}$ are strong symmetric monoidal and Lemma
\ref{lem:fixprod} to get that  
\[N_e^{C_2}i_e^*(\und{R}^\fix) \Box N_e^{C_2}i_e^*(\und{k}^c) =
  N_e^{C_2}i_e^*(\und{R}^\fix \Box \und{k}^c) =
N_e^{C_2}i_e^*(\und{R}^\fix \Box \und{k}^\fix)
\cong N_e^{C_2}i_e^*(\und{(R\otimes k)}^\fix).\] 
Then we can use Lemma \ref{lem:prodnorm} to rewrite the diagram as
\[ \xymatrix@1{ \und{((R \otimes k) \otimes (R \otimes k) \otimes
      k)}^\fix \ar@<0.7ex>[rr]^(0.55){\nu \Box \id}
    \ar@<-0.7ex>[rr]_(0.55){\id \Box \nu'} & & \und{(R \otimes R
      \otimes k)}^\fix} \] 
where now $\nu$ uses the map $k \ra R$ and the induced $k$-module
structure on $R$ and $\nu'$ uses the multiplication in $k$.

Note that $R \otimes_k R \cong (R \otimes R) \otimes_{k \otimes k}
k$. We will show that taking the fixed point Tambara
functor commutes with forming coequalizers.

To this end we consider the full subcategory of $C_2$-Tambara functors
whose objects are fixed point Tambara functors and denote it by
$C_2\tamb^{\fix}$. But then the fact that the functor $T \mapsto
\und{T}^\fix$ is right adjoint to evaluation at the free level implies that
\begin{align*} C_2\tamb^{\fix}(\Af, \und{T}^\fix) & = C_2\tamb(\Af, \und{T}^\fix)\cong cC_2\text{-rings}(\Af(C_2/e),
                                                    T) \\
  & = cC_2\text{-rings}(R,
  T) = cC_2\text{-rings}(R, \und{T}^\fix(C_2/e)),  \end{align*}
where $cC_2\text{-rings}$ denotes the category of commutative
$C_2$-rings. Therefore, if we restrict to the above full subcategory,
taking the fixed point Tambara functor is left adjoint, hence preserves coequalizers.
\end{proof}


\begin{lem}\label{lem:assemble} 
If $k$ is a commutative ring with trivial $C_2$ action and 
$M$ and $N$ are two $k$-modules with a $k$-linear involution   and $2$
is invertible in $M$ or in $N$, then there is
an equivalence of $C_2$-Mackey functors 
\[\Mf\Box_{\und{k}^c} \Nf \cong \und{(M\otimes_k N)}^\fix\]
which is natural in $M$ and $N$. Here $C_2$ acts on $M\otimes N$ by
the diagonal action.  If $M$ and $N$ are both also commutative
$k$-algebras, this is an equivalence of $C_2$-Tambara functors. 
\end{lem}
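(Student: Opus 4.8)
The plan is to reduce this relative statement to the absolute statement of Lemma \ref{lem:fixprod} by the standard coequalizer description of the relative box product, in exactly the same spirit as the proof of Proposition \ref{prop:orbits}. Without loss of generality assume $2$ is invertible in $M$. At the free orbit $C_2/e$ both sides are $M \otimes_k N$, so the content is at $C_2/C_2$. First I would write $\Mf \Box_{\und{k}^c} \Nf$ as the coequalizer of
\[ \xymatrix@1{\Mf \Box \und{k}^c \Box \Nf \ar@<0.7ex>[rr] \ar@<-0.7ex>[rr] & & \Mf \Box \Nf} \]
where the two maps use the $\und{k}^c$-module structures on $\Mf$ and on $\Nf$ (i.e.\ the unit $\und{k}^c \to \Mf$ composed with multiplication, and similarly on the $\Nf$ side, together with the canonical identification $\und{k}^c = \und{k}^\fix$ since the action is trivial). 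Applying Lemma \ref{lem:fixprod} three times — noting that $2$ is invertible in $M$, hence in $M \otimes k$ and in $M \otimes k \otimes N$ — rewrites this diagram as
\[ \xymatrix@1{\und{(M \otimes k \otimes N)}^\fix \ar@<0.7ex>[rr] \ar@<-0.7ex>[rr] & & \und{(M \otimes N)}^\fix} \]
with the two maps now induced, respectively, by the $k$-module structure $k \otimes M \to M$ (together with $\id_N$) and by $\id_M$ together with $k \otimes N \to N$.

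Next I would invoke the adjunction established inside the proof of Proposition \ref{prop:orbits}: the functor sending a commutative $C_2$-ring in which $2$ is invertible to its fixed-point Tambara functor is a left adjoint (to evaluation at $C_2/e$), hence preserves colimits, and in particular coequalizers. Strictly speaking Proposition \ref{prop:orbits} phrases this for commutative rings, so for the Mackey-functor part of the present lemma I would instead argue directly: on fixed-point Mackey functors the restriction map to level $C_2/e$ is injective, and at level $C_2/e$ the coequalizer of the diagram above is visibly $M \otimes_k N$; since $2$ is invertible the fixed points and coinvariants at $C_2/C_2$ agree, so the coequalizer at $C_2/C_2$ is computed by the same coequalizer of $C_2$-modules, which is $(M \otimes_k N)$ with its diagonal action — i.e.\ the colimit is again a fixed-point Mackey functor and equals $\und{(M \otimes_k N)}^\fix$. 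The naturality in $M$ and $N$ is immediate since every map in sight is natural.

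For the Tambara (ring) part, when $M$ and $N$ are commutative $k$-algebras all four objects are $C_2$-Tambara functors, the two parallel maps are maps of Tambara functors, and the coequalizer in commutative $C_2$-rings of the level-$C_2/e$ diagram is $M \otimes_k N$ with its evident ring structure; by the adjunction of Proposition \ref{prop:orbits} the fixed-point functor carries this coequalizer to the coequalizer of Tambara functors, yielding $\und{(M \otimes_k N)}^\fix$ compatibly with the identification already obtained on underlying Mackey functors. I expect the only mild subtlety — the ``main obstacle'' — to be bookkeeping: checking that the two maps out of $\und{(M \otimes k \otimes N)}^\fix$ produced by Lemma \ref{lem:fixprod} really are the ones induced by the $k$-actions (so that their coequalizer is $M \otimes_k N$ and not some other quotient), and confirming that no extra Frobenius-reciprocity-type relations are introduced when passing through the $\Box$ with $\und{k}^c$; both of these are handled exactly as in the relative-box-product computation in the proof of Proposition \ref{prop:orbits}, using that $2$ is invertible to identify coinvariants with fixed points at each stage.
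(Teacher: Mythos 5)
Your proposal is correct and follows essentially the same route as the paper: write $\Mf\Box_{\und{k}^c}\Nf$ as the coequalizer of $\Mf\Box\und{k}^c\Box\Nf \rightrightarrows \Mf\Box\Nf$, identify both terms via Lemma \ref{lem:fixprod}, and use that (with $2$ invertible) the fixed-point Mackey/Tambara functor commutes with this coequalizer, exactly as in the proof of Proposition \ref{prop:orbits}. Your direct argument for the Mackey-functor case (fixed points agree with coinvariants, which commute with colimits levelwise) is just an explicit version of the paper's ``completely analogous'' remark, so there is no substantive difference.
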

\begin{proof}
Using Lemma \ref{lem:fixprod} we know that 
\[\Mf\Box{\und{k}^c} \Box \Nf = \Mf\Box{\und{k}^\fix} \Box \Nf \cong
  \und{(M\otimes k \otimes N)}^\fix\] 
and
\[\Mf\Box \Nf  \cong \und{(M\otimes N)}^\fix.\]
The result in the $k$-module case then follows from the fact that
taking the fixed point  Mackey functor commutes with forming
coequalizers, which is completely analogous to the fact that the
fixed point Tambara functor commutes with forming coequalizers which
was shown in the proof of Proposition \ref{prop:orbits}  
above.  In the case of $k$-algebras, it follows directly by the
argument in the proof there. 
\end{proof}

\begin{thm}\label{thm:relativeisgood}
Assume that $k \ra R$ is a map of commutative $C_2$-rings where $C_2$ acts
trivially on $k$ and $2$ is invertible in $R$, and let $X$ be a finite
simplicial $C_2$-set. Then 
\[ \cL^{C_2, \und{k} ^c}_X(\und {R}^\fix) \cong \und{\cL^k_X(R)}^\fix,\]
where $C_2$ acts on each level $\cL^k_{X_n}(R)$ by simultaneously using the action induced from the $C_2$-action on $X_n$ (exchanging copies of $R$ as needed) by naturality and acting on all copies of $R$.
\end{thm}

\begin{proof}
Theorem \ref{thm:relativeisgood} follows from the previous two results: Proposition \ref{prop:orbits} says that for free orbits $C_2/e$,
\[ \cL^{C_2, \und{k} ^c}_{C_2/e}(\und {R}^\fix) \cong \und{\cL^k_{C_2/e}(R)}^\fix.\]
Clearly
 for one-point orbits, 
 \[ \cL^{C_2, \und{k} ^c}_{C_2/C_2}(\und {R}^\fix) = \und {R}^\fix= \und{\cL^k_{C_2/C_2}(R)}^\fix.\]
If $X$ and $Y$ are disjoint $C_2$-sets
\[ \cL^{C_2, \und{k} ^c}_{X\sqcup Y} (\und {R}^\fix) \cong \cL^{C_2, \und{k} ^c}_{X} (\und {R}^\fix) 
\Box_{\und{k} ^c}
\cL^{C_2, \und{k} ^c}_{Y} (\und {R}^\fix). \]
Then Lemma \ref{lem:assemble} implies that the identification for the free
and trivial orbits can be assembled into a statement about disjoint
unions of orbits. This gives the desired identification in each fixed
simplicial degree. 

Face maps in $X$ are surjective and the identifications above are
compatible with fold maps, orbit surjections $C_2/e \ra C_2/C_2$ and
isomorphisms. As degeneracy maps just insert units, they are also
compatible with the degreewise isomorphisms.

\end{proof}

\section{Identifying $\cL_{S^\sigma }^{C_2}(\Af)$}
In this section we will continue to work with the cyclic group of order
$2$, $C_2 = \langle \tau \mid \tau^2=e\rangle$, and we will consider the
$C_2$-simplicial set $S^\sigma$  which models the 
one-point compactification of the real sign-representation, 
\[  \xymatrix@R=0.2cm{ & \bullet \\ S^\sigma = & \\ & \bullet \ar@/_3ex/[uu] \ar@/^3ex/[uu]}\]
where the $C_2$-action flips the two arcs.  We will call $S^\sigma$ with this action the flip circle.

By \cite[(7.4)]{lrz-gloday}, for any $C_2$-Tambara functor $\und{T}$
we can express the $C_2$-Loday construction of $\und{T}$ with respect
to $S^\sigma$ as a two-sided bar construction
\begin{equation}
  \label{eq:compare-flip}
\cL^{C_2}_{S^{\sigma}}(\und{T}) \cong B(\und{T}, N^{C_2}_ei^{*}_e\und{T}, \und{T}).
\end{equation}
We will simplify this for the $C_2$-Tambara functor $\und{R}^\fix$ associated to a commutative ring  $R$ with involution $a\mapsto \bar a$.    
We will repeatedly use  the commutative $C_2$-ring $R \otimes R$, with
 \begin{equation}\label{eq:algnorm}
 \tau(a\otimes b) = \bar b\otimes \bar a.
 \end{equation}

For a ring spectrum $A$ with an anti-involution, Dotto, Moi,
Patchkoria and Reeh observed \cite[p.~84]{dmpr}, that
\[ B(A, N_e^{C_2}i_e^*A, A) \simeq B(A, A \wedge A,
  A), \]
where they use the flip-$C_2$-action on $A \wedge A$ (switching
coordinates and acting on them, as in  \eqref{eq:algnorm}). They
identify 
$\THR(A)$ with $B(A, N_e^{C_2}i_e^*A, A)$ in \cite[Theorem 2.23]{dmpr}
under a flatness assumption on $A$. 
    
The following result is an algebraic version of this result where we
use the $C_2$-action on $R \otimes R$ that exchanges the coordinates and acts
on both tensor factors.  

 \begin{thm} \label{thm:barfix}
 If $R$ is a commutative ring with involution and $2$ is invertible in
 $R$, then  there is a natural equivalence of simplicial $C_2$-Tambara functors
 \[
 \cL^{C_2}_{S^{\sigma}}(\Af) \cong 
 B (\Af, N^{C_2}_ei^{*}_e\Af, \Af) \cong
 \und{B(R, R\otimes R, R) }^\fix
 \]
 where $C_2$ acts on $R\otimes R$ as in  \eqref{eq:algnorm}.
 \end{thm}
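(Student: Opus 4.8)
The plan is to establish the two equivalences separately. The first equivalence, $\cL^{C_2}_{S^\sigma}(\Af) \cong B(\Af, N^{C_2}_ei^*_e\Af, \Af)$, is exactly \eqref{eq:compare-flip} applied to the $C_2$-Tambara functor $\und{T} = \Af$, so there is nothing new to prove there; it is cited from \cite[(7.4)]{lrz-gloday}. The work is therefore concentrated in the second equivalence, $B(\Af, N^{C_2}_ei^*_e\Af, \Af) \cong \und{B(R, R\otimes R, R)}^\fix$. My approach is to identify the two-sided bar construction levelwise and check that the identifications are compatible with the simplicial structure maps.

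In simplicial degree $n$, the bar construction $B(\Af, N^{C_2}_ei^*_e\Af, \Af)_n$ is the box product
\[
\Af \Box \underbrace{N^{C_2}_ei^*_e\Af \Box \cdots \Box N^{C_2}_ei^*_e\Af}_{n} \Box \Af.
\]
The strategy is to rewrite this using the lemmas from the previous section. First, since $2$ is invertible in $R$, Lemma \ref{lem:fixprod} gives $\Af \Box \Af \cong \und{(R\otimes R)}^\fix$, and more generally a repeated application identifies any box product of fixed-point Tambara functors of $R$ with $\und{(R^{\otimes j})}^\fix$. The factors $N^{C_2}_ei^*_e\Af$ are handled by Lemma \ref{lem:prodnorm}, which states $(N^{C_2}_ei^*_e\Af)\Box\Mf \cong \und{(R\otimes R\otimes M)}^\fix$ for any abelian group $M$ with involution; iterating with $M$ itself a tensor product of copies of $R$, I can absorb each norm-restriction factor into the fixed-point functor at the cost of introducing two extra tensor factors of $R$. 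Carrying this out, $B(\Af, N^{C_2}_ei^*_e\Af, \Af)_n$ becomes $\und{(R^{\otimes(2n+2)})}^\fix$, and I should check this matches $\und{B(R, R\otimes R, R)_n}^\fix = \und{(R \otimes (R\otimes R)^{\otimes n} \otimes R)}^\fix$, which indeed has $2+2n$ tensor factors. Because all the lemmas assert naturality in their inputs and (when the inputs are rings) compatibility with the Tambara structure, the identifications are natural and multiplicative, so they assemble into an isomorphism of simplicial $C_2$-Tambara functors rather than merely a levelwise one.

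The main obstacle will be verifying that the face and degeneracy maps of the bar construction translate correctly under these identifications — in particular that the multiplication maps $N^{C_2}_ei^*_e\Af \Box N^{C_2}_ei^*_e\Af \to N^{C_2}_ei^*_e\Af$ and the module action maps $\Af \Box N^{C_2}_ei^*_e\Af \to \Af$ correspond, under Lemmas \ref{lem:fixprod} and \ref{lem:prodnorm}, to the expected face maps of $B(R, R\otimes R, R)$, which collapse adjacent tensor factors via the ring multiplication of $R$ and of $R\otimes R$. The remark following Lemma \ref{lem:prodnorm} is exactly the tool for this: it spells out that the composite built from the counit of $(N^{C_2}_e, i^*_e)$, the identification of Lemma \ref{lem:fixprod}, and multiplication recovers the multiplication in $R$ together with the induced module structure. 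I would invoke that observation (and its evident variants for the inner multiplications) to pin down each structure map, so the verification reduces to bookkeeping of tensor factors rather than any genuinely new computation. The flatness hypothesis from the introduction does not appear here because we are working with fixed-point Tambara functors of ordinary rings and only need $2$ invertible; flatness enters only later when comparing with $\THR$ or passing to derived tensor products.
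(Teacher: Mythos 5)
Your proposal matches the paper's own argument: the first equivalence is quoted from \eqref{eq:compare-flip}, and the second is obtained by an induction that starts with Lemma \ref{lem:fixprod} for $\Af\Box\Af$ and then absorbs the norm-restriction factors one at a time via Lemma \ref{lem:prodnorm} with $M = R\otimes(R\otimes R)^{\otimes(n-1)}\otimes R$, exactly as you describe. Your appeal to the comment following Lemma \ref{lem:prodnorm} to handle compatibility of the face and degeneracy maps is also precisely how the paper settles the simplicial structure, so there is nothing to add.
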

 
So  in every simplicial degree $n$,  $\cL^{C_2}_{S^{\sigma}}(\Af)_n= \Af \Box
(N^{C_2}_ei^{*}_e\Af )^{\Box n} \Box \Af$ is the fixed point Tambara
functor of the $C_2$-ring $R \otimes (R\otimes R)^{\otimes n} \otimes
R$ with $C_2$-action given by 
\begin{align*}
\tau&  (a_0\otimes (a_1\otimes a_{2n+1})\otimes (a_2\otimes a_{2n})\otimes \cdots\otimes (a_n\otimes a_{n+2} )\otimes a_{n+1}) \\
& = \bar{a}_0\otimes (\bar{a}_{2n+1} \otimes \bar{a}_{1})\otimes (\bar{a}_{2n}\otimes \bar{a}_{2})\otimes \cdots\otimes (\bar{a}_{n+2} \otimes\bar{a}_{n} )\otimes \bar{a}_{n+1}. 
\end{align*} 

One can visualize this $C_2$-action as

\begin{center}
\setlength{\unitlength}{1cm}
\begin{picture}(9,3)
\put(0,2){$a_1$}
\put(1,2.5){$a_0$}
\put(2,2){$a_{2n+1}$}
\put(0.5,2.2){$\otimes$}
\put(2.3,1.5){$\otimes$}
\put(2.4,1){$\vdots$}
\put(0,1){$\vdots$}
\put(1.5,2.2){$\otimes$}
\put(2.3,0.6){$\otimes$}
\put(-0.1,0.6){$\otimes$}
\put(-0.1,1.5){$\otimes$}
\put(0,0.3){$a_n$}
\put(2,0.3){$a_{n+2}$}
\put(0.5,0){$\otimes$}
\put(1.9,0){$\otimes$}
\put(1,-0.3){$a_{n+1}$}
\put(3.5,1){$\mapsto$}
\put(5,2){$\bar{a}_{2n+1}$}
\put(6,2.5){$\bar{a}_0$}
\put(7,2){$\bar{a}_{1}$}
\put(5.5,2.2){$\otimes$}
\put(7.3,1.5){$\otimes$}
\put(7.4,1){$\vdots$}
\put(5,1){$\vdots$}
\put(6.5,2.2){$\otimes$}
\put(7.3,0.6){$\otimes$}
\put(4.9,0.6){$\otimes$}
\put(4.9,1.5){$\otimes$}
\put(5,0.3){$\bar{a}_{n+2}$}
\put(7,0.3){$\bar{a}_{n}$}
\put(5.5,-0.1){$\otimes$}
\put(6.9,-0.1){$\otimes$}
\put(6,-0.3){$\bar{a}_{n+1}$}
\end{picture}
\end{center}

\bigskip 

 \begin{rem}
Note that in contrast to Proposition \ref{prop:orbits} $N^{C_2}_ei^{*}_e \Af$ is
\emph{not} isomorphic to 
$\und{(R\otimes R)}^\fix$, even in very simple cases!  For example, for
 $R=\Z$ with the trivial $C_2$-action, $\und{(R\otimes R)}^\fix$ is just
 $\underline{\Z}^c$ with respect to the constant action, while
$N^{C_2}_ei^{*}_e \underline{\Z}^c$ is the $C_2$-Burnside Tambara
functor (see for instance \cite[(5.1)]{lrz-gloday}). We need an outer
copy of $\und{R}^\fix$ in Theorem \ref{thm:barfix} as a catalyst in order to
achieve the desired simplification. 
  \end{rem}

\begin{proof}
The proof follows by induction on $n$.
The base case $n=0$ is Lemma \ref{lem:fixprod} applied to $M=N=R$, and the
inductive step can be done with the help of Lemma \ref{lem:prodnorm}
for $M=  R \otimes 
(R\otimes R)^{\otimes(n-1)} \otimes R$.  Note that both lemmas proceed
by identifying all the terms to the $C_2$-coinvariant (second) part of
the box product on $C_2/C_2$, so these identifications of the term 
$\Af \Box (N^{C_2}_ei^{*}_e\Af )^{\Box n} \Box \Af$ with $\und
{(R\otimes (R\otimes R)^{\otimes n} \otimes R)} ^\fix$ behave as one
would expect for internal multiplications and insertions of units.
See also the comment 
below the proof of Lemma  \ref{lem:prodnorm}.  Therefore, these identifications are
compatible with the simplicial structure maps.  
\end{proof}

\begin{rem} \label{rem:znota}
It is important to remember that the equivariant Loday construction $\cL_{X}^{C_2}(\Af)$ is
\emph{not} the Loday construction relative to $\und{\Z}^c$, but rather
the Loday construction relative to the $C_2$-Burnside Tambara functor,
and these are different. For example, taking the relative norm-restriction term from \eqref{eq:relnorm}  $N_e^{C_2,
  \und{\Z}^c}i_e^*\und{\Z}^c $  gives $\und{\Z}^c$, whereas taking
$N_e^{C_2}i_e^*\und{\Z}^c $ gives the $C_2$-Burnside Tambara functor.

However, Theorem
\ref{thm:barfix} shows that if $2$ is invertible 
in $R$, 
 \[
 \cL^{C_2}_{S^{\sigma}}(\Af) \cong 
 B(\Af, N^{C_2}_ei^{*}_e\Af, \Af) \cong
 \und{B(R, R\otimes R, R)}^\fix
 \]
 where $C_2$ acts on $R\otimes R$ as in \eqref{eq:algnorm}.
Note that in the bar construction $B(R, R\otimes R, R)$ the ground
ring is the ring of integers whereas for the Loday construction we
work relative to the Burnside Tambara functor. Similarly, Theorem
\ref{thm:relativeisgood} implies that for $k = \Z$ we also obtain that
$\cL^{C_2, \und{\Z}^c}_{S^\sigma}(\und{R}^\fix)$ can be identified with
$\und{B^{\Z}(R, R\otimes R, R)}^\fix$ and therefore in
hindsight we obtain that in this case the Loday construction relative to the
Burnside Tambara functor agrees with the one relative to $\und{\Z}^c$. 
\end{rem}

\begin{rem}
If $R$ is a commutative ring with involution and if $M$ is an
$R$-module with involution compatible with the involution on $R$  in
the sense that $\overline{rm}=\bar{r}\bar{m}$ for all $r \in R$, $m
\in M$, then the $C_2$-Mackey functor $\und{M}^\fix$ is a symmetric
bimodule over the $C_2$-Tambara functor $\Af$.

Equivariant Loday constructions on based $G$-simplicial sets $X$ of a
$G$-Tambara
functor $\und{T}$ with coefficients in a $G$-Mackey functor $\und{N}$
which is a symmetric $\und{T}$-bimodule are defined analogously to those in the
non-equivariant case. We place the coefficients at the basepoint
in each simplicial degree. Then $\cL_X^G(\und{T}; \und{N})$ is a
simplicial $G$-Mackey functor. 
\end{rem}

\section{Relating $\cL_{S^\sigma}^{C_2}(\und{R})$  to reflexive homology} \label{sec:hr}

Let us for now consider a more general context: Let $k$ be a
commutative ring and let $A$ be an  associative $k$-algebra. We assume
that $A$ carries an anti-involution that we denote by $a\mapsto
\bar{a}$ and which we assume to be $k$-linear. Let $M$ be an
$A$-bimodule with an 
involution $m\mapsto \bar{m}$ that is compatible with the bimodule
structure over $A$ in
the sense
that $\overline{amb}=  \bar{b} \bar{m} \bar{a}$ for all $a,b\in A$,
$m\in M$. All tensor products will be over $k$ in this section, unless
otherwise indicated.

Graves \cite[Definition 1.8]{graves} defines an involution on every
level of the Hochschild complex $\CH^k_n(A; M) =M\otimes A^{\otimes n}$
by  
\begin{equation}\label{eq:rn}
r_n(m\otimes a_1\otimes a_2\otimes\cdots\otimes a_n) =\bar m \otimes \bar a_n\otimes\cdots\otimes \bar a_2\otimes \bar a_1.
\end{equation}
For the face maps of the Hochschild complex we get that $r_{n-1}\circ
d_i=d_{n-i} \circ r_n$, so these levelwise 
maps do not preserve the simplicial structure but they reverse it.
Since this relation implies that $d\circ r_n=(-1)^n r_{n-1}\circ d$,
applying $r_n$ at each level $n$ does \emph{not} induce a map on the
associated chain complexes, unless we adjust the signs.

The $C_2$-actions given by the $r_n$-maps together with the simplicial
structure maps on $\CH^k_\cdot(A;M)$ turn $\CH^k_\cdot(A;M)$ into a
functor from the crossed simplicial group $\Delta R^{op}$ in the sense
of Fiedorowicz-Loday \cite{fl} to the
category of $k$-modules. 
In \cite[Definition 1.9]{graves}, Graves defines reflexive homology as
functor homology as follows: 
\[ \hr^{+,k}_*(A;M)=\Tor_*^{\Delta R^\op}(k^*, \CH^k_\cdot(A;M)).\]
Here $k^*$ is the constant right $\Delta R^\op$-module with value $k$
at all objects.  In \cite[Definition 2.1]{graves}, he defines a
bicomplex $C_{*,*}$ which is a bi-resolution of $k^*$. With its help
he shows in \cite[Proposition 2.4]{graves} that $ \hr^{+,k}_*(A;M)$ is the
homology of the complex $\CH^k_*(A;M)/(1-r)$, where $r$ is obtained from
the maps $r_n$ of \eqref{eq:rn} by
\begin{equation}\label{eq:r}
r(m\otimes a_1\otimes\cdots\otimes a_n) 
=
(-1)^{\frac{n(n+1)}{2}} r_n(m\otimes a_1\otimes\cdots\otimes a_n) 
=  (-1)^{\frac{n(n+1)}{2}} \bar m \otimes \bar a_n\otimes\cdots\otimes  \bar a_1.
\end{equation}
With this choice of sign, the map $r$ is a chain map, so the quotient
by $1-r$ is still a chain complex. In the following we denote by
$B^k_*(A, A\otimes A^\op, M)$ the chain complex associated to the
simplicial $k$-module $B^k(A, A\otimes A^\op, M)$. 

\begin{thm}\label{thm:resolutions}
  Assume that $k$ is a commutative ring and that $A$ is an associative
  $k$-algebra with an anti-involution as above whose underlying
  $k$-module is flat. Let $M$ be an $A$-bimodule with a compatible involution
  as above, and assume that $2$ is invertible in
  $A$. Then there is  a
  $C_2$-equivariant quasi-isomorphism of chain complexes 
\begin{equation}\label{eq:sq}
B^k_*(A, A\otimes A^\op, M)\to \CH^k_*(A;M).
\end{equation}
Here the generator $\tau$ of $C_2$ acts diagonally on $B^k_*(A, A\otimes
A^\op, M)$, where the action on $A\otimes A^\op$ is given by
$\tau(a\otimes b)= \bar b\otimes \bar a$. On
the Hochschild chain complex $C_2$ acts via $r$.
\end{thm}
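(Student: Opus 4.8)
The target is a $C_2$-equivariant quasi-isomorphism $B^k_*(A, A\otimes A^\op, M) \to \CH^k_*(A;M)$, and the natural first move is to write down an explicit simplicial (or chain-level) map. The classical non-equivariant fact is that for a flat $k$-algebra $A$ one has $\THH$-type identifications $B^k_*(A, A\otimes A^\op, M) \simeq \CH^k_*(A;M)$; the standard comparison map sends a bar-chain $m\otimes(a_1\otimes a_1')\otimes\cdots\otimes(a_n\otimes a_n')$ to an alternating sum of Hochschild chains obtained by ``shuffling'' the left factors $a_i$ to the left of $m$ and the right factors $a_i'$ to the right, in the appropriate order. So the plan is: (1) recall/construct this shuffle quasi-isomorphism $\sq$ carefully, with its signs; (2) verify it is a quasi-isomorphism using the flatness of $A$ over $k$ (this is the non-equivariant input, which one may cite or reprove by a spectral-sequence or direct resolution argument, since $A\otimes A^\op$ is flat and the bar construction computes $\Tor_{A\otimes A^\op}(A,M)=\CH^k_*(A;M)$ up to quasi-iso); (3) check that $\sq$ intertwines the two $C_2$-actions, i.e.\ $\sq\circ\tau_{\mathrm{bar}} = r\circ\sq$, where $\tau_{\mathrm{bar}}$ acts diagonally via $a\otimes b\mapsto \bar b\otimes\bar a$ on $A\otimes A^\op$ and via the involution on $M$, and $r$ is the signed reversal map of \eqref{eq:r}.

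The heart of the argument — and the step I expect to be the main obstacle — is step (3), the sign bookkeeping for $C_2$-equivariance. On the bar side, $\tau$ reverses each tensor pair $a_i\otimes a_i'\mapsto \bar a_i'\otimes\bar a_i$ but does \emph{not} reverse the order of the $n$ bar-entries; on the Hochschild side, $r$ fully reverses the order of $a_1,\dots,a_n$ and multiplies by $(-1)^{n(n+1)/2}$. When one expands $\sq$ as a sum over shuffles (equivalently, over ways of interleaving the ``left-moving'' $a_i$'s and ``right-moving'' $a_i'$'s around $m$ in the cyclic word), applying $\tau$ first and then $\sq$ produces the same set of Hochschild monomials as applying $\sq$ first and then $r$, but each monomial acquires a sign coming from (a) the bars being reversed under $r$ and (b) the Koszul signs in the shuffle sum. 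The claim is that these signs match up exactly, and the factor $(-1)^{n(n+1)/2}$ in \eqref{eq:r} is precisely the correction needed. I would verify this by a direct computation on a general shuffle summand: track the permutation that $r$ applies, decompose it, and compare its sign with the Koszul sign of the corresponding shuffle; small cases $n=1,2,3$ should make the pattern transparent before writing the general identity.

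Concretely, I would organize the write-up as follows. First state the formula for $\sq$ (as a chain map $B^k_*(A,A\otimes A^\op,M)\to\CH^k_*(A;M)$) and record that it is the classical $E_1$-comparison; cite that it is a quasi-isomorphism under flatness of $A$ over $k$ — here the invertibility of $2$ is \emph{not} needed, only flatness — or include a short argument via the two-sided bar resolution $B^k_*(A\otimes A^\op, A\otimes A^\op, A)\otimes_{A\otimes A^\op} M$. Then, in a lemma, prove the identity $\sq\circ\tau = r\circ\sq$ by the sign computation above. Since both $\tau$ and $r$ are honest chain maps (the signs in \eqref{eq:r} were chosen to make $r$ a chain map, and $\tau$ is simplicial after the same sign twist), and $\sq$ is a chain map, the intertwining identity upgrades $\sq$ to a map of $C_2$-chain-complexes; being a quasi-isomorphism of underlying complexes, it is a $C_2$-equivariant quasi-isomorphism. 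The role of ``$2$ invertible'' in the theorem statement is presumably only to guarantee that passing to fixed points / the relevant homotopy orbits is well-behaved downstream (it is used in the companion identifications with $\hr^{+,k}_*$ and $\ihh^k_*$), so I would remark that the quasi-isomorphism \eqref{eq:sq} itself holds without inverting $2$, with the hypothesis retained only for uniformity with the surrounding results.
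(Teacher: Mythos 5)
Your overall framing (compare two flat resolutions of $A$ over $A\otimes A^\op$, use flatness of $A$ over $k$ to descend a quasi-isomorphism after applying $-\otimes_{A\otimes A^\op}M$, then check equivariance) is the same as the paper's, but the step you yourself single out as the heart of the argument is exactly where the proposal has a genuine gap, in two ways. First, the map $\sq$ is never actually written down, and as described it does not typecheck: $B^k_n(A,A\otimes A^\op,M)$ has $2n+1$ tensor factors of $A$ besides $M$, while $\CH^k_n(A;M)$ has only $n$, so a degree-preserving chain map cannot simply ``shuffle'' the left and right factors around $m$ --- it must multiply $n+1$ of them together, and the choice of which ones to multiply is the entire content of the comparison map. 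Second, and more seriously, the equivariance identity $\sq\circ\tau=r\circ\sq$ is asserted (``the signs match up exactly'') but never verified, and for the natural comparison map it is simply false: the evident simplicial $A\otimes A^\op$-map $f$ covering $\id_A$ (multiply the right-hand entries onto $a_0$) is not $C_2$-equivariant for any choice of sign in $r$, because $\tau$ on the bar side does not reverse the order of the bar entries while $r$ does. This is precisely why the paper does not use a single explicit map: it forms $g:=r\circ f\circ\tau$ (an $A\otimes A^\op$-chain map, no longer simplicial, still covering the identity on $A$) and takes the average $\tfrac{f+g}{2}$, which is equivariant by a one-line computation --- and this averaging is where the hypothesis that $2$ is invertible enters the proof of this very theorem, not merely ``downstream''.

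Consequently your closing remark --- that the quasi-isomorphism \eqref{eq:sq} holds without inverting $2$, the hypothesis being kept only for uniformity --- is unsubstantiated and contradicts the role this hypothesis plays in the paper's proof. It may or may not be true that some integral equivariant quasi-isomorphism exists, but your proposal does not produce one; the candidate maps one naturally writes down come in pairs interchanged by the involution, and making them equivariant is exactly an averaging problem. To repair the proposal you would either have to exhibit a genuinely equivariant chain map with the signs verified in all degrees (not just checked in small cases), or adopt the averaging trick, in which case you are back to the paper's argument and must retain $2$-invertibility. The non-equivariant part of your plan (both complexes compute $\Tor_*^{A\otimes A^\op}(A,M)$ by flatness; a map of flat resolutions covering $\id_A$ is a quasi-isomorphism since only $H_0$ is nonzero; tensoring a quasi-isomorphism of bounded-below flat complexes with $M$ preserves quasi-isomorphisms) is fine and is essentially the paper's use of Lemma \ref{lem:flatalsogood}.
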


\begin{cor} \label{cor:hr}
Under the assumptions of Theorem \ref{thm:resolutions}, we get
homology isomorphisms  
\begin{equation}\label{eq:easy}
H_*(B^k_*(A, A\otimes A^\op, M)) \cong \HH^k_*(A;M)
\end{equation}
\begin{equation}\label{eq:hard}
H_*(B^k_*(A, A\otimes A^\op, M)^{C_2} ) \cong \hr^{+,k}_*(A;M).
\end{equation}
\end{cor}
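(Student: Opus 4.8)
The plan is to deduce Corollary \ref{cor:hr} from Theorem \ref{thm:resolutions} by passing through the quasi-isomorphism \eqref{eq:sq} and using that $2$ is invertible, so that all $C_2$-constructions are exact. For \eqref{eq:easy}, I would simply observe that \eqref{eq:sq} is a quasi-isomorphism of chain complexes of flat $k$-modules (using that $A$ is $k$-flat, hence $A \otimes A^\op$ is $k$-flat, so the bar complex $B^k_*(A, A\otimes A^\op, M)$ computes $\Tor^{A\otimes A^\op}_*(A,M)$, which is by definition $\HH^k_*(A;M)$); applying $H_*$ to \eqref{eq:sq} then gives the first isomorphism. Strictly speaking, \eqref{eq:easy} does not even need the involution or the invertibility of $2$; it is the standard identification of the two-sided bar resolution with Hochschild homology, recorded here for convenient reference alongside \eqref{eq:hard}.

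For \eqref{eq:hard}, the key input is that a $C_2$-equivariant quasi-isomorphism between complexes of $\Z[\tfrac{1}{2}]$-modules induces a quasi-isomorphism on $C_2$-fixed points. This holds because, with $2$ invertible, the idempotent $e = \tfrac{1}{2}(1 + \tau)$ splits each chain group as a direct sum of its $+1$- and $-1$-eigenspaces for $\tau$, and taking $\tau$-fixed points is the same as applying $e$; thus the fixed-point functor is an exact functor (a direct summand of the identity, picked out by a natural idempotent), and exact functors preserve quasi-isomorphisms. Applying this to \eqref{eq:sq} gives $H_*(B^k_*(A, A\otimes A^\op, M)^{C_2}) \cong H_*(\CH^k_*(A;M)^{C_2})$, where on the right $C_2$ acts via the chain map $r$ of \eqref{eq:r}. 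Finally I would identify $\CH^k_*(A;M)^{C_2}$ with $\CH^k_*(A;M)/(1-r)$: again because $2$ is invertible, for any chain complex $D_*$ with an action of $C_2$ via an involution $r$, the inclusion $D_*^{C_2} \hookrightarrow D_*$ composed with the projection $D_* \twoheadrightarrow D_*/(1-r)$ is an isomorphism (its inverse sends $[x] \mapsto \tfrac{1}{2}(x + rx)$). Since Graves's \cite[Proposition 2.4]{graves} identifies $H_*(\CH^k_*(A;M)/(1-r))$ with $\hr^{+,k}_*(A;M)$, chaining these isomorphisms yields \eqref{eq:hard}.

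I do not expect a serious obstacle here: Corollary \ref{cor:hr} is a formal consequence of Theorem \ref{thm:resolutions} together with two elementary homological facts about the prime $2$ being invertible. The only points that require a little care are bookkeeping rather than substance: first, making sure the $C_2$-action referred to in \eqref{eq:hard} is exactly the $r$-action of \eqref{eq:r} and not the unsigned $r_n$-action (these differ, and only the signed one is a chain map, which is why we must quotient by $1-r$ rather than by $1-r_n$ levelwise); and second, checking that the diagonal $C_2$-action on the bar complex in Theorem \ref{thm:resolutions} is literally an action by a chain-level involution, so that the same idempotent-splitting argument applies on the left-hand side. Both are already guaranteed by the statement of Theorem \ref{thm:resolutions}, so the proof of the corollary is short.
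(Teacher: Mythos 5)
Your proposal is correct and follows essentially the same route as the paper: \eqref{eq:easy} comes directly from the quasi-isomorphism of Theorem \ref{thm:resolutions} (equivalently, both complexes compute $\Tor_*^{A\otimes A^\op}(A,M)$), and \eqref{eq:hard} is obtained by splitting both complexes into $\pm 1$-eigenspaces for the $C_2$-action (your idempotent $\tfrac{1}{2}(1+\tau)$ is just this splitting), concluding that the equivariant quasi-isomorphism restricts to one on fixed points, identifying fixed points with coinvariants $\CH^k_*(A;M)/(1-r)$ since $2$ is invertible, and invoking \cite[Proposition 2.4]{graves}. Your bookkeeping remarks about the signed action $r$ versus $r_n$ match the paper's setup, so there is nothing to add.
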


\begin{proof}
We get the first isomorphism because the map of Theorem
\ref{thm:resolutions} is a quasi-isomorphism. It also follows from the
fact that both complexes calculate $ \Tor_*^{A\otimes A^\op }(A,M)$
because of the assumption that $A$ is flat over $k$.  Note that in the
case $M=A$ the first isomorphism also follows from the fact that the
bar construction on the left is isomorphic to the Segal-Quillen
subdivision of the Hochschild complex \cite{segal}. 

The second isomorphism follows from the fact that $2$ is invertible in
both complexes: As $2$ is invertible in $A$, the unit of $A$, $k \ra
A$ factors through $k[\frac{1}{2}]$.  We can express every level of each of
the complexes 
as the direct sum of the $+1$-eigenspace and the $-1$-eigenspace of
the action of the generator of $C_2$ on them.  Since the actions
commute with $d$, in fact each of the complexes breaks up as the
direct sum of a positive subcomplex and a negative subcomplex.

Since the quasi-isomorphism is a $C_2$-map, it preserves this
decomposition, and as it is a quasi-isomorphism,  it must be a
quasi-isomorphism on the positive and negative subcomplexes,
respectively.  That means that we get an isomorphism 
\[H_*( B^k_*(A, A\otimes A^\op, M)^{C_2}) \to H_*(\CH^k_*(A;M) ^{C_2}),\]
but since $2$ is invertible, we have a chain  isomorphism 
\[ \CH^k_*(A;M) ^{C_2} \to  \CH^k_*(A;M) _{C_2}= \CH^k_*(A;M)/(1-r),\]
and hence the claim follows with \cite[Proposition 2.4]{graves}. 
\end{proof}

\begin{proof}[Proof of Theorem \ref{thm:resolutions}]
  We consider two $A\otimes A^\op$-flat resolutions of $A$:
We use $B^k_*(A, A\otimes A^\op, A\otimes A^\op)$ with $A\otimes A^\op$
acting on the rightmost coordinate and $B^k_*(A,A,A)$ where $A^\op$ acts
on the left and $A$ on the right, as in the $\Tor$-identification of
Hochschild homology.  We let $C_2$ act on $B_*(A, A\otimes A^\op,
A\otimes A^\op)$ by acting diagonally on all the coordinates, and
denote the action of the generator on it by $\tau$.  This action is
simplicial, and therefore commutes with $d$.  We let $C_2$ act on
$B^k_*(A,A,A)$ by setting  
\[r(a_0\otimes a_1\otimes \cdots \otimes a_n\otimes a_{n+1})
  =(-1)^\frac{n(n+1)}{2} \bar  a_{n+1}\otimes \bar a_n \otimes
  \cdots\otimes \bar a_1 \otimes \bar a_0.\] 
Because of the sign adjustment, $r$ is a chain map.  We only know that
the two resolutions are flat, not that they are projective.  But any
chain map between them that covers the identity on $A$ induces an
isomorphism on $H_0$, which is the only nontrivial homology group for
both complexes, and therefore is a quasi-isomorphism. 

We define 
$f_n \colon  B^k_n(A, A\otimes A^\op, A\otimes A^\op) \to B^k_{n}(A,A,A)$ as
\begin{align*} f_n(a_0 
\otimes  &(a_1\otimes a_{2n+2})
\otimes (a_2\otimes a_{2n+1}) \otimes \cdots \otimes (a_{n+1}\otimes a_{n+2}))\\
= & a_{n+2}a_{n+3}\cdots a_{2n+1} a_{2n+2} a_0\otimes a_1\otimes a_2\otimes\cdots\otimes a_{n+1}.
\end{align*}
This is a simplicial $A\otimes A^\op$-module map, and covers the identity on
the $A$ being resolved since in level $0$ it sends $a_0\otimes
(a_1\otimes a_2)$ to $a_2 a_0 \otimes a_1$ and both of these map
down to $a_2 a_0 a_1\in A$.
This map  is not $C_2$-equivariant, but if we define $g := r\circ
f\circ\tau$, we get $g_n \colon  B^k_n(A, A\otimes A^\op, A\otimes
A^\op) \to B^k_{n}(A,A,A)$ with 
\begin{align*} g_n(a_0 
\otimes  &(a_1\otimes a_{2n+2})
\otimes (a_2\otimes a_{2n+1}) \otimes \cdots \otimes (a_{n+1}\otimes a_{n+2}))\\
= & (-1)^\frac{n(n+1)}{2}  a_{n+2}\otimes a_{n+3}\otimes \cdots \otimes a_{2n+1} \otimes a_{2n+2}\otimes  a_0  a_1  a_2 \cdots  a_{n+1}.
\end{align*}
This is not a simplicial map but it is an $A\otimes A^\op$-module map
and it is a
chain map since $r$, $f$, and $\tau$ are chain maps. Again, it covers the
identity on $A$ since on level $0$, $a_0\otimes (a_1\otimes
a_2)\mapsto a_2 \otimes a_0 a_1$ and both of these map down to $a_2
a_0 a_1\in A$.  

We now use the fact that $2$ is invertible in $A$ 
and consider the map
\[\frac {f+g}{2} \colon B^k_*(A, A\otimes A^\op, A\otimes A^\op)\to B^k_*(A,A,A),\]
which is a map of  $A\otimes A^\op$-chain complexes and covers the
identity on $A$ since  
$f$ and $g$ are such maps. This map  is also equivariant because
\[ r\circ\frac {f+g}{2}
= r\circ\frac {f+r\circ f\circ\tau}{2}
=\frac {r\circ f + f\circ\tau}{2}
=\frac {r\circ f \circ\tau + f}{2}\circ\tau
=\frac {f+g}{2}\circ\tau.
\]
So  $\frac{f+g}{2}$ is a quasi-isomorphism of flat $A\otimes A^\op$-complexes.  By Lemma \ref{lem:flatalsogood} below, if we tensor it over $A\otimes A^\op$ with the $A\otimes A^\op$-module $M$, we get a quasi-isomorphism 
\[ \frac{f+g}{2}\otimes\id_M \colon B^k_*(A, A\otimes A^\op, M)\to
  \CH^k_*(A;M). \]
This map is equivariant because it is the tensor product of two
equivariant maps. 
\end{proof}

\begin{lem}\label{lem:flatalsogood} Let $R$ be an associative ring and
  let $\phi\colon  C_*\to D_*$ be a quasi-isomorphism between two
  bounded below chain complexes of flat right $R$-modules.  Let $M$ be
  a left $R$-module.  Then 
$\phi\otimes \id_M \colon C_*\otimes_R M \to D_*\otimes_R M$ 
is a quasi-isomorphism as well.
\end{lem}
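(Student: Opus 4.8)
The plan is to prove Lemma \ref{lem:flatalsogood} by reducing to the well-known statement for \emph{projective} resolutions via the mapping cone. First I would form the mapping cone $\cone(\phi)_*$, which is a bounded-below chain complex of flat right $R$-modules (flatness is closed under finite direct sums), and which is acyclic precisely because $\phi$ is a quasi-isomorphism. The claim then reduces to showing that for an acyclic, bounded-below chain complex $E_*$ of flat right $R$-modules, $E_* \otimes_R M$ is again acyclic; applying this to $E_* = \cone(\phi)_*$ and using that $\cone(\phi)_* \otimes_R M \cong \cone(\phi \otimes \id_M)_*$ (tensoring commutes with forming cones) gives that $\phi \otimes \id_M$ is a quasi-isomorphism.

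To prove that an acyclic bounded-below complex of flats stays acyclic after $\otimes_R M$, I would use the standard argument that such a complex is a filtered colimit (in fact a union) of its ``brutal'' truncations and is built from short exact sequences $0 \to Z_n \to E_n \to Z_{n-1} \to 0$ with all syzygies $Z_n$ flat: bounded-belowness lets one check flatness of the cycles inductively from the bottom using that $E_n$ and $Z_{n-1}$ are flat, and then each short exact sequence stays exact after tensoring with $M$ because $Z_{n-1}$ is flat (so $\Tor_1^R(Z_{n-1}, M) = 0$). Splicing these back together shows $E_* \otimes_R M$ is acyclic. Alternatively, and more slickly, one can invoke the hyper-Tor spectral sequence: for a bounded-below complex $E_*$ of flat modules, $E_* \otimes_R M$ computes the hyper-derived tensor product $E_* \otimes_R^{\mathbb L} M$, which depends only on the quasi-isomorphism class of $E_*$, so an acyclic $E_*$ yields an acyclic $E_* \otimes_R M$.

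Concretely in the paper's setting, it is cleanest to observe that $\frac{f+g}{2}$ is already a map between bounded-below complexes of flat $A \otimes A^{\op}$-modules whose cone is acyclic, and that $M$ is the left $A \otimes A^{\op}$-module we tensor with; the lemma then upgrades the unenriched statement ``flat complexes compute $\Tor$'' to the derived-category fact that a quasi-isomorphism of flat complexes remains one after $- \otimes_R M$. I would state it at the level of generality given because this is exactly what is needed in the proof of Theorem \ref{thm:resolutions}: there both source and target of $\frac{f+g}{2}$ are flat but only one of them ($B^k_*(A,A,A)$) is recognizably ``nice,'' so the usual comparison of projective resolutions does not directly apply.

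The main obstacle is purely bookkeeping: one must be careful that ``bounded below'' is genuinely used (over a general ring, an unbounded acyclic complex of flats need not remain acyclic after tensoring — the relevant counterexamples involve infinite products or non-coherent rings), and that the inductive flatness argument for the syzygies is set up so the induction has a base case. Since the complexes arising here (bar constructions) are non-negatively graded, this causes no difficulty, but it is the one hypothesis that cannot be dropped, so I would flag it explicitly rather than gloss over it.
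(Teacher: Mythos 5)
Your proposal is correct and follows essentially the same route as the paper: both reduce to the acyclicity of $\cone(\phi)\otimes_R M = \cone(\phi\otimes\id_M)$ and rest on the fact that a bounded-below acyclic complex of flat modules stays acyclic after $\otimes_R M$, which the paper phrases as ``the cone is a flat resolution of $0$, and flat resolutions compute $\Tor$.'' Your syzygy induction (or the hyper-$\Tor$ argument) simply proves that ingredient rather than citing it, and your emphasis on the bounded-below hypothesis is exactly the right point to flag.
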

\begin{proof}
Since $\phi$ is a quasi-isomorphism,  its mapping cone, $\cone(\phi)$, 
is acyclic.  The mapping cone is also 
a bounded-below chain complex of flat right $R$-modules, so it can be
viewed as a flat resolution of the $0$-module, possibly with a
shift. We suspend it, so that $\Sigma^a \cone(\phi)$ is a non-negative
chain complex whose bottom chain group is in degree zero. 
Since flat resolutions can be used to calculate
$\Tor$,  
\[H_*(\Sigma^a\cone(\phi)\otimes_R M) \cong\Tor_{*}^R(0, M) =0\]
for all $*$.  So $\Sigma^a\cone(\phi)\otimes_R M$ and hence
$\cone(\phi)\otimes_R M =\cone(\phi\otimes_R \id_M)$ is acyclic.  But
that forces $\phi\otimes_R \id_M$ to be a 
quasi-isomorphism. 
\end{proof}

Taking our identification of $\cL_{S^\sigma}^{C_2}(\und{R})$ with
$\und{B(R, R\otimes R,R) }^\fix$ from Theorem \ref{thm:barfix}
together with Corollary \ref{cor:hr} we obtain the following
comparison result between the homology groups of the $C_2$-Loday
construction for the flip circle $S^\sigma$ and $\und{R}^\fix$ on the one
hand and the reflexive homology groups on the other hand: 

\begin{thm} \label{thm:lodayishr}
Assume that $R$ is a commutative ring with involution and that $2$ is
invertible in $R$. If the underlying abelian group of $R$ is flat over $\Z$,
then
\[ \pi_*( \cL^{C_2}_{S^\sigma}(\und{R}^\fix )(C_2/C_2)) \cong \hr^{+,\Z}_*(R,R).  \] 
\end{thm}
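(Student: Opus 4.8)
The plan is to simply chain together the two structural results already established in the paper and then observe that the flatness hypothesis over $\Z$ lets us apply them. First I would invoke Theorem \ref{thm:barfix}: since $R$ is a commutative ring with involution and $2$ is invertible in $R$, we have an equivalence of simplicial $C_2$-Tambara functors
\[
\cL^{C_2}_{S^{\sigma}}(\Af) \cong \und{B(R, R\otimes R, R)}^\fix,
\]
where $R\otimes R$ carries the $C_2$-action of \eqref{eq:algnorm}. Evaluating at the trivial orbit $C_2/C_2$ gives
\[
\cL^{C_2}_{S^{\sigma}}(\Af)(C_2/C_2) \cong B(R, R\otimes R, R)^{C_2}
\]
as simplicial abelian groups, since the fixed-point Mackey functor takes the value of the $C_2$-fixed points at $C_2/C_2$. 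Taking homotopy groups and passing to the associated chain complex via the Dold--Kan correspondence, $\pi_*$ of the left-hand side is the homology of the chain complex $B(R, R\otimes R, R)^{C_2}$.

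Next I would match this with Corollary \ref{cor:hr}, applied in the special case $k = \Z$, $A = R$ (a commutative ring, so that $A\otimes A^{\op} = R \otimes R$ with the anti-involution becoming the action $\tau(a\otimes b) = \bar b \otimes \bar a$, exactly \eqref{eq:algnorm}), and $M = R$ as an $R$-bimodule with its given involution. The hypotheses of Theorem \ref{thm:resolutions} are met: $R$ is flat over $\Z$ by assumption, $R$ carries a compatible (anti-)involution, and $2$ is invertible in $R$. Hence \eqref{eq:hard} gives
\[
H_*\bigl(B^\Z_*(R, R\otimes R, R)^{C_2}\bigr) \cong \hr^{+,\Z}_*(R,R).
\]
Stringing the two identifications together yields
\[
\pi_*\bigl(\cL^{C_2}_{S^\sigma}(\und{R}^\fix)(C_2/C_2)\bigr) \cong H_*\bigl(B^\Z_*(R, R\otimes R, R)^{C_2}\bigr) \cong \hr^{+,\Z}_*(R,R),
\]
which is the claim.

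The only genuine point requiring care --- and the step I expect to be the main obstacle --- is checking that the $C_2$-action appearing in Theorem \ref{thm:barfix} on $B(R, R\otimes R, R)$ really is the same $C_2$-action ($r$, in the sign-twisted form of \eqref{eq:r}, restricted to $M = R$) under which Corollary \ref{cor:hr} computes reflexive homology, and that the identification of $\cL^{C_2}_{S^\sigma}(\Af)$ with $\und{B(R,R\otimes R,R)}^\fix$ is compatible with taking $C_2$-fixed points degreewise. Both points are essentially bookkeeping: the explicit $C_2$-action on each simplicial degree is spelled out after Theorem \ref{thm:barfix}, and on the bar construction $B^\Z_*(R, R\otimes R, R)$ the diagonal $\tau$-action together with the sign convention that makes $f+g$ over $2$ equivariant in the proof of Theorem \ref{thm:resolutions} is precisely $r$; one checks that the permutation of tensor factors and the bars in \eqref{eq:algnorm} reverses to the order-reversal in \eqref{eq:rn} after the simplicial subdivision matching $B(R,R\otimes R,R)$ with the Segal--Quillen subdivision of $\CH^\Z_\cdot(R;R)$. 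Once this compatibility is in place, the theorem is immediate.
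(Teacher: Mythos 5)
Your proposal is correct and is essentially identical to the paper's own proof: the paper likewise obtains Theorem \ref{thm:lodayishr} by combining Theorem \ref{thm:barfix} (evaluated at $C_2/C_2$, giving degreewise fixed points of $B(R,R\otimes R,R)$ with the diagonal flip action) with Corollary \ref{cor:hr} in the case $k=\Z$, $A=M=R$, where $A\otimes A^{\op}=R\otimes R$. The compatibility of actions you flag is indeed just the observation that the relevant action on the bar construction in Corollary \ref{cor:hr} is the sign-free diagonal $\tau$-action (the signed $r$ lives on the Hochschild complex side), which matches the action in Theorem \ref{thm:barfix}.
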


The relative version follows directly from Corollary \ref{cor:hr}, Theorem \ref{thm:relativeisgood}, and the identification in \eqref{eq:compare-flip}:
\begin{thm} \label{thm:lodayishrk}
Assume that $R$ is a commutative $k$-algebra with a $k$-linear
involution and that $2$ is 
invertible in $R$. If the underlying module of $R$ is flat over $k$,
then
\[ \pi_*(\cL^{C_2 ,\und{k}^c}_{S^\sigma}(\und{R}^\fix)(C_2/C_2)) \cong
  \hr^{+,k}_*(R,R).   \]  
\end{thm}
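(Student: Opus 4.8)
The plan is to assemble Theorem \ref{thm:lodayishrk} from the pieces that have already been set up, so that essentially no new computation is needed. First I would apply Theorem \ref{thm:relativeisgood} with $X = S^\sigma$ and the map $k \to R$, which gives $\cL^{C_2,\und{k}^c}_{S^\sigma}(\und{R}^\fix) \cong \und{\cL^k_{S^\sigma}(R)}^\fix$ as simplicial $C_2$-Mackey functors; here the $C_2$-action on each level combines the geometric flip on $S^\sigma_n$ with the involution on all copies of $R$. Evaluating at the trivial orbit $C_2/C_2$ and taking homotopy groups then yields $\pi_*(\cL^{C_2,\und{k}^c}_{S^\sigma}(\und{R}^\fix)(C_2/C_2)) \cong \pi_*\bigl(\cL^k_{S^\sigma}(R)^{C_2}\bigr)$, i.e.\ the homology of the $C_2$-fixed points of the simplicial $k$-module $\cL^k_{S^\sigma}(R)$.

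Next I would identify $\cL^k_{S^\sigma}(R)$ with a two-sided bar construction. Using the simplicial model of $S^\sigma$ as two copies of $\Delta^1$ glued along their endpoints (the same combinatorics that gives \eqref{eq:compare-flip} in the equivariant setting), one gets a levelwise identification $\cL^k_{S^\sigma}(R) \cong B^k_\cdot(R, R\otimes_k R, R)$, where $R\otimes_k R$ carries the $C_2$-action $\tau(a\otimes b)=\bar b\otimes\bar a$ from \eqref{eq:algnorm} and the flip-plus-involution action on the bar construction corresponds under this identification exactly to the action induced on $\cL^k_{S^\sigma}(R)$ by the $C_2$-structure. Since $R$ is commutative, $R\otimes_k R^\op = R\otimes_k R$, so this is precisely the complex appearing in Corollary \ref{cor:hr} with $A = R$ and $M = R$.

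Finally I would invoke Corollary \ref{cor:hr}, equation \eqref{eq:hard}, with $A = M = R$: under the hypotheses that $R$ is flat over $k$ and $2$ is invertible in $R$, we have $H_*\bigl(B^k_*(R, R\otimes_k R, R)^{C_2}\bigr) \cong \hr^{+,k}_*(R,R)$. Combining this chain of identifications gives
\[ \pi_*(\cL^{C_2,\und{k}^c}_{S^\sigma}(\und{R}^\fix)(C_2/C_2)) \cong \pi_*\bigl(\cL^k_{S^\sigma}(R)^{C_2}\bigr) \cong H_*\bigl(B^k_*(R, R\otimes_k R, R)^{C_2}\bigr) \cong \hr^{+,k}_*(R,R), \]
which is the claim.

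The main obstacle I anticipate is bookkeeping of the $C_2$-action across the two identifications: one must check that the action induced on $\cL^k_{X_n}(R)$ in Theorem \ref{thm:relativeisgood} (geometric flip of $X_n$ combined with the involution on every tensor factor of $R$) matches, level by level and compatibly with the simplicial structure maps, the action on $B^k_\cdot(R,R\otimes_k R,R)$ used in Corollary \ref{cor:hr} (the sign-twisted reversal $r$ together with $\tau(a\otimes b)=\bar b\otimes\bar a$). The sign $(-1)^{n(n+1)/2}$ in \eqref{eq:r}—\eqref{eq:sq} enters when one passes from the simplicial to the chain-complex picture, so the verification is really the assertion that the passage to normalized chains intertwines these two descriptions of the $C_2$-action; once that is in place everything else is a formal concatenation of the cited results. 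Since $2$ is invertible one could also sidestep part of this by working with $C_2$-coinvariants, as in the proof of Corollary \ref{cor:hr}.
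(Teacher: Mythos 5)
Your proposal is correct and follows essentially the same route as the paper, whose proof of Theorem \ref{thm:lodayishrk} is exactly the concatenation of Theorem \ref{thm:relativeisgood}, the bar-construction description of the $S^\sigma$-Loday construction (as in \eqref{eq:compare-flip}), and Corollary \ref{cor:hr}. The sign bookkeeping you worry about is not actually an issue at the step you place it: the $C_2$-action on $B^k_\cdot(R,R\otimes_k R,R)$ used in Theorem \ref{thm:resolutions} and Corollary \ref{cor:hr} is the plain simplicial diagonal action (the sign-twisted $r$ lives only on the Hochschild complex, inside the proof of Corollary \ref{cor:hr}), and this unsigned action visibly matches the flip-plus-involution action on $\cL^k_{S^\sigma}(R)$ from Theorem \ref{thm:relativeisgood}.
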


\section{Involutive Hochschild homology as a Loday construction}
\label{sec:ihh}

Involutive Hochschild cohomology was defined in \cite{braun}.
Fern\`andez-Val\`encia and Giansiracusa extended the definition to involutive
Hochschild homology. The input is an associative algebra with
anti-involution and in 
\cite{fvg} the authors work relative to a field $k$. 

A straightforward generalization of their definition \cite[Definition 3.3.1]{fvg} to arbitrary commutative ground rings is as follows:

\begin{defn}
  Let $k$ be a commutative ring, let $A$ be an associative algebra
  with anti-involution and let $M$ be an involutive $A$-bimodule. The
  involutive Hochschild homology groups of $A$ with coefficients in
  $M$ are 
  \[ \ihh_*^k(A;M) = \Tor_*^{A^{ie}}(A;M). \]
\end{defn}

Here $A^{ie}$ is the involutive enveloping algebra. As in the
classical case its role is to describe (involutive) $A$-bimodules:
There is an equivalence of categories between the category of
involutive $A$-bimodules and the category of modules over $A^{ie}$
\cite[Proposition 2.2.1]{fvg}. As a $k$-module 
\[ A^{ie} = A \otimes_k A \otimes_k k[C_2]\]
and the multiplication on $A^{ie}$ is determined by
\[ (a \otimes b \otimes \tau^i) \cdot (c \otimes d \otimes \tau^j) =
  (a \otimes b) \cdot \tau^i(c \otimes d) \otimes \tau^{i+j}.\]
Here, $\tau(c \otimes d)$ is again $\bar{d} \otimes \bar{c}$, so
\[ (a \otimes b \otimes \tau) \cdot (c \otimes d \otimes \tau^j) =
  (a\bar{d} \otimes \bar{c}b)  \otimes \tau^{1+j}.\]
Hence we can view $A^{ie}$ as a twisted group algebra $(A \otimes
A^{\op})[C_2]$. As before, every involutive algebra $A$ is an involutive
$A$-bimodule.  

Of course we know from the classical setting of Hochschild homology
that the above definition does not yield what you want if $A$ is not flat as
a $k$-module.

We obtain a comparison theorem between involutive Hochschild homology
and the homology of the $C_2$-Loday construction of the circle
$S^\sigma$ for $\und{R}^\fix$.

\begin{thm} \label{thm:ihh}
Let $R$ be a commutative ring with a $C_2$-action. Assume that $2$ is
invertible in $R$ and that the underlying abelian group of $R$ is
 flat. Then
\[ \pi_*(\cL^{C_2}_{S^{\sigma}}(\Af)(C_2/C_2)) \cong \ihh^\Z_*(R). \]
\end{thm}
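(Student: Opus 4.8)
The plan is to reduce Theorem \ref{thm:ihh} to Corollary \ref{cor:hr} (specifically \eqref{eq:hard}), via the identification already in hand from Theorem \ref{thm:barfix}. By Theorem \ref{thm:barfix}, evaluating $\cL^{C_2}_{S^\sigma}(\Af)$ at the trivial orbit $C_2/C_2$ gives the $C_2$-fixed points of the simplicial $C_2$-ring $B(R, R\otimes R, R)$, where $C_2$ acts on $R\otimes R$ by the flip-with-involution $\tau(a\otimes b)=\bar b\otimes\bar a$ of \eqref{eq:algnorm} and diagonally on the outer copies. So $\pi_*(\cL^{C_2}_{S^\sigma}(\Af)(C_2/C_2)) \cong H_*\bigl(B^\Z_*(R, R\otimes R, R)^{C_2}\bigr)$. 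Since $2$ is invertible and $R$ is flat over $\Z$, Corollary \ref{cor:hr}, equation \eqref{eq:hard}, applied with $k=\Z$, $A=R$, $M=R$ (noting that for a commutative ring $A\otimes A^\op = A\otimes A$ with the flip action, and the involution-compatibility hypotheses are automatic), identifies this with $\hr^{+,\Z}_*(R;R)$. It then remains to identify $\hr^{+,\Z}_*(R;R)$ with $\ihh^\Z_*(R)$, or, equivalently, to identify $H_*(B^\Z_*(R, R\otimes R, R)^{C_2})$ directly with $\Tor_*^{R^{ie}}(R;R)$.

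The second route is cleaner. First I would unwind $\ihh^\Z_*(R) = \Tor_*^{R^{ie}}(R;R)$ using the description $R^{ie} = (R\otimes R^\op)[C_2]$ as a twisted group algebra from the previous section. The standard way to compute $\Tor$ over a twisted group algebra $B[C_2]$ when $2$ is invertible is: a $B[C_2]$-module is the same as a $B$-module with a semilinear $C_2$-action, and since $|C_2|=2$ is invertible, $B[C_2]$ is a separable-over-$B$ sort of situation — concretely, one can compute $\Tor^{B[C_2]}(R,R)$ by first taking a $B$-flat resolution of $R$, forming $\mathrm{Tor}^B(R,R)$-type complex $B^\Z_*(R, R\otimes R, R)$, and then taking $C_2$-fixed points (which agrees with coinvariants since $2$ is invertible). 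This is exactly the content one wants: the bar complex $B^\Z_*(R, R\otimes R, R)$ with its diagonal $C_2$-action is a complex of $R^{ie}$-modules computing $R\otimes^{\mathbb{L}}_{R\otimes R} R$ compatibly with the $C_2$-structure, and taking $C_2$-fixed points computes $R\otimes^{\mathbb{L}}_{R^{ie}} R$ because the inclusion $R\otimes R\hookrightarrow R^{ie}$ together with the $C_2$-action lets us recover the $R^{ie}$-derived tensor product as the homotopy fixed points $=$ strict fixed points of the $R\otimes R$-derived tensor product. The flatness of $R$ over $\Z$ guarantees the bar complex genuinely computes the relevant $\Tor$, and $2$ invertible guarantees the passage to fixed points is exact and commutes with homology.

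Concretely, the key steps in order: (1) invoke Theorem \ref{thm:barfix} to write $\cL^{C_2}_{S^\sigma}(\Af)(C_2/C_2) \simeq B^\Z_*(R, R\otimes R, R)^{C_2}$; (2) observe that because $2$ is invertible the functor $(-)^{C_2}$ is exact on $\Z[1/2][C_2]$-modules and agrees with $(-)_{C_2}$, so it commutes with taking homology; (3) recognize $B^\Z_*(R, R\otimes R, R)$ as a $C_2$-equivariant flat resolution of $R$ over $R\otimes R$, hence a flat resolution of $R$ over $R^{ie} = (R\otimes R^\op)[C_2]$ after taking into account the group action — here one uses that $R^{ie} \otimes_{R\otimes R^\op} (\text{bar complex})$ with the $C_2$-action collapses, so that $R\otimes^{\mathbb{L}}_{R^{ie}} R \simeq (R\otimes^{\mathbb{L}}_{R\otimes R^\op} R)^{hC_2} \simeq (B^\Z_*(R, R\otimes R, R))^{C_2}$; (4) conclude $\pi_*(\cL^{C_2}_{S^\sigma}(\Af)(C_2/C_2)) \cong \Tor_*^{R^{ie}}(R;R) = \ihh^\Z_*(R)$. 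Alternatively one simply cites Corollary \ref{cor:hr} to land in $\hr^{+,\Z}_*(R;R)$ and then proves $\hr^{+,\Z}_*(R;R) \cong \ihh^\Z_*(R)$ by comparing the complex $\CH^\Z_*(R;R)/(1-r)$ of \cite[Proposition 2.4]{graves} with the twisted-group-algebra bar complex for $R^{ie}$.

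The main obstacle I expect is step (3): making precise the claim that taking strict $C_2$-fixed points of the (relative) bar complex over $R\otimes R^\op$ computes derived tensor over the twisted group algebra $R^{ie}$. One has to verify that $R^{ie}$ is flat — indeed free of rank $2$ — over $R\otimes R^\op$, so that an $(R\otimes R^\op)$-flat resolution of $R$ need not remain $R^{ie}$-flat, and that instead the correct statement is an identification of derived functors: $R\otimes^{\mathbb{L}}_{R^{ie}}R$ is computed by first resolving over $R\otimes R^\op$ and then applying the exact functor $(-)\otimes_{(R\otimes R^\op)[C_2]}R$, which, since $R = k\otimes_{\Z[C_2]}(\text{something})$... — more carefully, one checks that $-\otimes_{R^{ie}}R$ applied to an induced module $R^{ie}\otimes_{R\otimes R^\op}P$ gives $P\otimes_{R\otimes R^\op}R$ with its residual $C_2$-action, and then quotients/fixed-points by $C_2$; the invertibility of $2$ makes the Hochschild–Serre-type spectral sequence for the extension $R\otimes R^\op \to R^{ie}\to \Z[C_2]$ degenerate to the bare fixed-point statement. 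Once that bookkeeping is done the theorem follows formally. It may in fact be cleanest to phrase the whole argument through the already-proven \eqref{eq:hard} of Corollary \ref{cor:hr}, turning the obstacle into the single remaining comparison $\hr^{+,\Z}_*(R;R)\cong\ihh^\Z_*(R)$, which is a direct complex-level identification: both are computed (using flatness of $R$ and invertibility of $2$) by the $C_2$-fixed points of $B^\Z_*(R,R\otimes R,R)$, the reflexive side by \cite[Proposition 2.4]{graves} after the subdivision isomorphism with the Hochschild complex \cite{segal}, and the involutive side by the twisted-group-algebra resolution.
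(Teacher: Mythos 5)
Your proposal is correct in outline, but it takes a genuinely different route from the paper. The paper also starts from Theorem \ref{thm:barfix}, but then it does \emph{not} identify $H_*\bigl(B^\Z_*(R,R\otimes R,R)^{C_2}\bigr)$ with $\Tor_*^{R^{ie}}(R;R)$ by a change-of-rings/descent argument over the twisted group algebra (your step (3)), nor by passing through reflexive homology. Instead it proves Proposition \ref{prop:bistor} by an axiomatic characterization of $\Tor_*^{R^{ie}}(R;-)$ as a functor of the coefficient bimodule $M$: Lemma \ref{lem:h0} identifies $\pi_0$ with $R\otimes_{R^{ie}}M$ using the pushout description from \cite[Proposition 2.4.1]{fvg}, Lemma \ref{lem:ses} gives long exact sequences in $M$ (using flatness of $R$ over $k$ and exactness of $(-)^{C_2}$), and Lemma \ref{lem:proj} shows vanishing in positive degrees for projective coefficients by the untwisting isomorphism $(D\otimes_k k[C_2])^{C_2}\cong D$ applied to $P=R^{ie}$; Theorem \ref{thm:ihh} is then the case $k=\Z$, $M=R$. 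Your route B needs exactly the crossed-product bookkeeping you flag: the identification $M\otimes_{R^{ie}}N\cong (M\otimes_{R\otimes R^\op}N)_{C_2}$, its derived version (note it is naturally homotopy \emph{orbits}, which agree with fixed points only because $2$ is invertible), and the degeneration of the resulting spectral sequence; the induced-module computation you sketch there is essentially the paper's Lemma \ref{lem:proj}. Your route A (citing \eqref{eq:hard} and then comparing $\hr^{+,\Z}_*$ with $\ihh^\Z_*$) is also viable but cannot simply quote the literature, since the existing comparison (\cite[Theorem 9.1]{graves} via \cite[Proposition 3.3.3]{fvg}) is stated over fields of characteristic $\neq 2$, as the paper remarks; so you would end up proving the same twisted-group-algebra statement anyway. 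What each approach buys: the paper's axiomatic argument avoids setting up any spectral sequence and works with general coefficients $M$ from the start, while your derived/descent argument, once made precise, is more conceptual and yields the chain $\ihh^k_*\cong(\HH^k_*)_{C_2}\cong\hr^{+,k}_*$ over an arbitrary base with $2$ invertible as a byproduct.
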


And again over a general commutative $k$, there is a relative version:

\begin{thm} \label{thm:ihhk}
Let $k$ be a commutative ring and let $R$ be a commutative $k$-algebra with a $k$-linear $C_2$-action. Assume that $2$ is
invertible in $R$ and that the underlying $k$-module of $R$ is
 flat. Then
\[ \pi_*(\cL^{C_2,\und{k}^c}_{S^{\sigma}}(\Af)(C_2/C_2)) \cong \ihh^k_*(R). \]
\end{thm}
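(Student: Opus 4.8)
The plan is to reduce Theorem \ref{thm:ihhk} to the results already assembled in the excerpt, essentially by identifying the involutive enveloping algebra $A^{ie}$ with the relative norm-restriction term $N_e^{C_2,\und{k}^c}i_e^*(\und{R}^\fix)$ evaluated appropriately, and then matching the two $\Tor$-descriptions of the two sides. First I would invoke the identification in \eqref{eq:compare-flip} together with Theorem \ref{thm:relativeisgood}: since $2$ is invertible in $R$ and the $C_2$-action on $k$ is trivial, $\cL^{C_2,\und{k}^c}_{S^\sigma}(\Af) \cong \und{\cL^k_{S^\sigma}(R)}^\fix$, so evaluating at $C_2/C_2$ gives $\pi_*(\cL^{C_2,\und{k}^c}_{S^\sigma}(\Af)(C_2/C_2)) = \pi_*\big((\cL^k_{S^\sigma}(R))^{C_2}\big)$. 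Alternatively, and this is the route I expect to be cleaner, I would use the relative version of Theorem \ref{thm:barfix}: combining Proposition \ref{prop:orbits} (which says $N_e^{C_2,\und{k}^c}i_e^*(\und{R}^\fix) \cong \und{(R\otimes_k R)}^\fix$ with the flip action) with the bar-construction description, one gets $\cL^{C_2,\und{k}^c}_{S^\sigma}(\Af) \simeq B(\Af, N_e^{C_2,\und{k}^c}i_e^*\Af, \Af) \cong \und{B^k(R, R\otimes_k R, R)}^\fix$, where $C_2$ acts on $R\otimes_k R$ by $\tau(a\otimes b) = \bar b\otimes\bar a$ and diagonally on everything else.

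Next I would evaluate this at the trivial orbit $C_2/C_2$: the fixed-point Tambara functor sends $C_2/C_2$ to the $C_2$-fixed points, so the left-hand side becomes $\pi_*\big(B^k_*(R, R\otimes_k R, R)^{C_2}\big)$. Now I would apply Corollary \ref{cor:hr} (in its relative form with $M = A = R$, which holds verbatim since $R$ is commutative and the statement only used that $2$ is invertible and $R$ is $k$-flat): this gives $H_*\big(B^k_*(R, R\otimes_k R^\op, R)^{C_2}\big) \cong \hr^{+,k}_*(R,R)$. Here I should note that for the commutative ring $R$ the opposite algebra $R^\op$ is just $R$, and $A\otimes A^\op$ with the flip-and-bar $C_2$-action is precisely the $C_2$-algebra $R\otimes_k R$ of \eqref{eq:algnorm}, so the bar construction appearing in Corollary \ref{cor:hr} is the same object as the one coming out of the relative Theorem \ref{thm:barfix}. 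That already yields $\pi_*(\cL^{C_2,\und{k}^c}_{S^\sigma}(\Af)(C_2/C_2)) \cong \hr^{+,k}_*(R,R)$, so it only remains to identify $\hr^{+,k}_*(R,R)$ with $\ihh^k_*(R)$.

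For that last identification I would argue directly from the two $\Tor$-descriptions. By definition $\ihh^k_*(R) = \Tor_*^{R^{ie}}(R; R)$ where $R^{ie} = (R\otimes_k R^\op)[C_2]$ is the twisted group algebra, and since $2$ is invertible in $R$ hence in $R^{ie}$, the group algebra $k[C_2]$ splits as $k[1/2]$-modules into $\pm 1$-eigenspaces; this lets one compute $\Tor$ over $R^{ie}$ as the $C_2$-invariant (equivalently, $+1$-eigenspace) part of a $\Tor$ over $R\otimes_k R$ with the flip action, which is exactly $H_*\big(B^k_*(R, R\otimes_k R, R)^{C_2}\big)$. This is essentially the same eigenspace-splitting argument used in the proof of Corollary \ref{cor:hr}, now applied on the algebra side rather than the complex side. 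Matching this with Graves's description of $\hr^{+,k}_*$ as the homology of $\CH^k_*(R;R)/(1-r) \cong \CH^k_*(R;R)^{C_2}$ (using again that $2$ is invertible, as in the last step of the proof of Corollary \ref{cor:hr}), and using Theorem \ref{thm:resolutions} to replace $\CH^k_*$ by the equivariantly quasi-isomorphic $B^k_*(R, R\otimes_k R, R)$, closes the circle.

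The main obstacle I anticipate is bookkeeping rather than conceptual: making sure that all the $C_2$-actions line up on the nose across the three descriptions — the diagonal-with-bar action on the relative bar construction coming from Theorem \ref{thm:barfix}, the $r$-action (with its $(-1)^{n(n+1)/2}$ sign twist) on the Hochschild complex from \eqref{eq:r}, and the twisted-group-algebra structure on $R^{ie}$ — and checking that the sign twist, which is invisible after passing to $C_2$-fixed points levelwise but matters for the chain differential, is correctly absorbed. Once the relative forms of Theorem \ref{thm:barfix} and Corollary \ref{cor:hr} are in hand (both of which follow from the already-proved absolute versions by the same arguments with $\otimes$ replaced by $\otimes_k$ and $\und{\Z}^c$ by $\und{k}^c$), the theorem is a formal consequence, exactly parallel to how Theorem \ref{thm:lodayishrk} followed from Corollary \ref{cor:hr}, Theorem \ref{thm:relativeisgood}, and \eqref{eq:compare-flip}.
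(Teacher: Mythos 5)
Your opening step is the same as the paper's: via Theorem \ref{thm:relativeisgood} (equivalently Proposition \ref{prop:orbits} together with the relative form of Theorem \ref{thm:barfix}) both arguments identify $\cL^{C_2,\und{k}^c}_{S^\sigma}(\Af)$ with $\und{B^k(R,R\otimes_k R,R)}^\fix$, reducing the theorem to computing $\pi_*\bigl(B^k_*(R,R\otimes_k R,R)^{C_2}\bigr)$. From there the paper does \emph{not} pass through reflexive homology: it shows (Lemmata \ref{lem:h0}, \ref{lem:ses}, \ref{lem:proj} and Proposition \ref{prop:bistor}) that $M\mapsto \pi_*\und{B^k(R,R\otimes_k R,M)}^\fix(C_2/C_2)$ satisfies the axiomatic characterization of $\Tor_*^{R^{ie}}(R;-)$ --- the right $\pi_0$ via \cite[Proposition 2.4.1]{fvg}, long exact sequences from $k$-flatness of $R$ and exactness of fixed points, and vanishing on projectives via the untwisting isomorphism $(D\otimes_k k[C_2])^{C_2}\cong D$ --- and Theorem \ref{thm:ihhk} follows at once, with arbitrary involutive bimodule coefficients for free. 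You instead quote Corollary \ref{cor:hr} (which, as you say, is already stated over a general $k$) to get $\hr^{+,k}_*(R,R)$ and then bridge $\hr^{+,k}_*\cong\ihh^k_*$ via the crossed-product comparison $\Tor_*^{R^{ie}}(R;R)\cong \Tor_*^{R\otimes_k R}(R;R)^{C_2}$. That comparison is true under the stated hypotheses, but it is precisely where the paper's new content lies: the literature only provides it over a field of characteristic different from $2$ (\cite[Proposition 3.3.3]{fvg}; the paper's closing remark points out that Graves's comparison has too-restrictive hypotheses), so you must prove it, and your one-line eigenspace remark is not yet a proof. To complete it you would take an equivariant flat resolution such as $B^k_*(R,R\otimes_k R,R\otimes_k R)$ with the flip action, regard it as a complex of $R^{ie}$-modules, and verify that its levels are flat over $R^{ie}$ (an induction-from-$k[C_2]$/untwisting argument of exactly the same kind as the paper's Lemma \ref{lem:proj}, using both that $2$ is invertible and that $R$ is $k$-flat), or equivalently set up a Maschke-type relative homological algebra or crossed-product spectral sequence argument; the sign bookkeeping you flag is then harmless, since the matching of the induced $C_2$-action on $\Tor$ with the $r$-action is exactly Theorem \ref{thm:resolutions}. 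With that step supplied your argument is correct and yields both isomorphisms of the theorem simultaneously; the paper's axiomatic route avoids reflexive homology altogether and is what actually carries the weight you are attributing to a known identification.
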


We prove Theorem \ref{thm:ihh} by comparing $\ihh_*^\Z(R;M)$ for an
involutive $R$-bimodule $M$ to the homotopy groups at the 
$C_2/C_2$-level of the simplicial Mackey functor $\und{B(R,R\otimes R,
  M)}^\fix$ where $C_2$ acts on $R\otimes R$ by $\tau(a\otimes b)
=\bar b \otimes \bar a$.  The lemmata below should be used for $k=\Z$.
The proof of Theorem \ref{thm:ihhk} is similar, just over a general
commutative ground ring $k$. 

In the following we will always assume that $R$ is a commutative $k$-algebra
with a $k$-linear $C_2$-action, that $2$ is 
invertible in $R$ and that the underlying $k$-module of $R$ is
flat over $k$.

\begin{lem} \label{lem:h0} The zeroth homotopy group of the simplicial
  $k$-module $\und{B^k(R,
    R\otimes_k R, M)}^\fix(C_2/C_2)$ is isomorphic to the zeroth
  involutive Hochschild homology group of $R$ with coefficients in $M$: 
\[ \pi_0(\und{B^k(R, R\otimes_k R, M)}^\fix(C_2/C_2)) \cong R
  \otimes_{R^{ie}} M = \ihh_0^k(R;M). \]
\end{lem}

\begin{proof}
As $2$ is invertible, taking $C_2$-fixed points is isomorphic to
taking $C_2$-coinvariants and both functors are exact. Thus we have to
identify the quotient of 
$(R \otimes M)_{C_2}$ by the bimodule action and this yields 
$(R \otimes_{R \otimes_k R} M)_{C_2}$ which is isomorphic to $(M/\{am
-ma, a \in R, m \in M\})_{C_2}$. 
By \cite[Proposition 2.4.1]{fvg}, $R \otimes_{R^{ie}} M$ is isomorphic
to the pushout of
\[ \xymatrix{M \ar[r] \ar[d] & M_{C_2} \\
   M/\{am -ma, a \in R, m \in M\}&  }\]
and this proves the claim. 
\end{proof}

\begin{lem} \label{lem:ses}
Assume that $0 \ra M_1 \ra M_2 \ra M_3 \ra 0$ is a short exact sequence of
$R^{ie}$-modules and abbreviate the simplicial $k$-module
$\und{B^k(R, R\otimes_k R,  M_i)}^\fix(C_2/C_2)$ by $BM_i$. Then we get an induced
long exact sequence on homotopy groups   
\[ \xymatrix{ \ldots \ar[r] &\pi_nBM_1 \ar[r] &
   \pi_nBM_2 \ar[r] & \pi_nBM_3 \ar`r[d]`[l]`[llld]`[dll][dll]
    \\
&     \pi_{n-1}BM_1 \ar[r] & \ldots & }
\]
\end{lem}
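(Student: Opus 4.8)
The plan is to realize the short exact sequence $0 \to M_1 \to M_2 \to M_3 \to 0$ of $R^{ie}$-modules as a short exact sequence of simplicial abelian groups $BM_i$ and then to invoke the usual long exact sequence on homotopy groups. The one thing that needs checking is that, in each simplicial degree, the assignment $M \mapsto \und{B^k_*(R, R\otimes_k R, M)}^\fix(C_2/C_2)$ is an exact functor of the $R^{ie}$-module $M$.

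First I would record the functoriality in the last slot: a morphism of $R^{ie}$-modules is in particular a morphism of $R$-bimodules compatible with the two involutions, hence by naturality of the bar construction induces a morphism of the simplicial objects $B^k_*(R, R\otimes_k R, M)$ that is equivariant for the $C_2$-action described just before Lemma \ref{lem:h0}; applying $\und{(-)}^\fix$ and evaluating at $C_2/C_2$ degreewise then yields a natural map $BM \to BM'$. So the given short exact sequence produces a sequence $0 \to BM_1 \to BM_2 \to BM_3 \to 0$, and it remains to see that it is short exact in each degree. In degree $n$ we have $B^k_n(R, R\otimes_k R, M) = R \otimes_k (R\otimes_k R)^{\otimes_k n} \otimes_k M$; since $R$, and therefore $R\otimes_k R$, is flat over $k$, the functor $M \mapsto B^k_n(R, R\otimes_k R, M)$ is exact. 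Because $2$ is invertible in $R$, passing from a $C_2$-module to the value of its fixed-point Mackey functor at $C_2/C_2$ — that is, to its $C_2$-fixed points, which here agree with its $C_2$-coinvariants — is also exact, exactly as used in the proofs of Corollary \ref{cor:hr} and Lemma \ref{lem:h0}. Composing, $M \mapsto \und{B^k_n(R, R\otimes_k R, M)}^\fix(C_2/C_2)$ is exact for all $n$, so $0 \to BM_1 \to BM_2 \to BM_3 \to 0$ is degreewise short exact.

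Finally, a short exact sequence of simplicial abelian groups (equivalently, of non-negatively graded chain complexes) gives rise to the usual long exact sequence on homotopy groups, with connecting homomorphisms $\pi_n BM_3 \to \pi_{n-1} BM_1$; this is exactly the sequence claimed. I do not expect a genuine obstacle in this argument: the only inputs beyond formal nonsense are the flatness of $R$ over $k$ (so that $B^k_n(R,R\otimes_k R,-)$ is exact) and the invertibility of $2$ (so that the fixed-point functor at $C_2/C_2$ is exact), and both are among the standing assumptions of this section.
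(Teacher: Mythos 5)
Your proposal is correct and follows essentially the same route as the paper: degreewise exactness of $M \mapsto \und{B^k_n(R, R\otimes_k R, M)}^\fix(C_2/C_2)$, using flatness of $R$ over $k$ for the tensor factors and invertibility of $2$ for exactness of fixed points, and then the standard long exact sequence associated to a degreewise short exact sequence of simplicial abelian groups (the paper phrases this via the associated chain complexes and $H_*C_*(BM_i)\cong\pi_*(BM_i)$). Your additional remarks on functoriality and $C_2$-equivariance of the induced maps are a harmless elaboration of what the paper leaves implicit.
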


\begin{proof}
As we assume that $R$ is flat over $k$, tensoring with $R$
is exact, and as $2$ is invertible, taking fixed points is
exact. Therefore, in every simplicial degree $n$, the sequence
\[ 0 \ra (BM_1)_n \ra  (BM_2)_n \ra
  (BM_3)_n \ra 0\]
is short exact and hence we obtain a short exact sequence of
simplicial $k$-modules
\[ 0 \ra BM_1 \ra BM_2 \ra BM_3 \ra 0\]
which yields a long exact sequence on homotopy groups. 
\end{proof}

\begin{lem} \label{lem:proj}
Assume that $P$ is a projective $R^{ie}$-module. Then
$\pi_n\und{B^k(R, R\otimes_k R, P)}^\fix(C_2/C_2) \cong 0$ for all
positive $n$. 
\end{lem}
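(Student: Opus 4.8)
The plan is to reduce the claim to the vanishing of higher $\Tor$ over the ring $S:=R\otimes_k R$, using that $2$ is invertible to commute $C_2$-fixed points past homology. There is no serious obstacle here; the one point that needs care — rather than a genuine difficulty — is precisely this interchange, which is where the hypothesis on $2$ enters, together with the fact (built into the setup of Theorem \ref{thm:resolutions}) that the relevant $C_2$-action on the bar construction is simplicial, hence acts by honest chain maps with no sign twist.

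First I would record the only structural input needed about $P$. Since $R^{ie}=(R\otimes_k R)[C_2]$ is, as a left module over its subring $R\otimes_k R$, free of rank two on $\{1,\tau\}$, a free $R^{ie}$-module is in particular a free $S$-module. Hence a projective $R^{ie}$-module $P$, being a direct summand of a free $R^{ie}$-module, is a direct summand of a free $S$-module; that is, $P$ is projective, and in particular flat, over $S$. This observation uses only the explicit description of the involutive enveloping algebra recalled above, and it is what allows the argument to go through for projective (not merely free) $P$.

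Next I would unwind the left-hand side. By definition $\und{B^k_*(R,R\otimes_k R,P)}^\fix(C_2/C_2)$ is, in each simplicial degree $n$, the $C_2$-fixed subgroup of $B^k_n(R,S,P)$ for the diagonal $C_2$-action (the involution on $R$, the map $\tau(a\otimes b)=\bar b\otimes\bar a$ on $S$, and the involution on $P$ induced by $\tau\in C_2\subseteq R^{ie}$) — the same action as in Theorem \ref{thm:resolutions}. As recalled there, this action is simplicial, so it commutes with the differential of the associated chain complex, and $\pi_n$ of the left-hand side is the $n$-th homology of the subcomplex $C_*(B^k_*(R,S,P))^{C_2}$. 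Because $2$ is invertible, the functor $N\mapsto N^{C_2}$ is the projection onto the image of the idempotent $\frac{1+\tau}{2}$, hence is exact, and an exact functor commutes with homology; therefore
\[ \pi_n\,\und{B^k_*(R,R\otimes_k R,P)}^\fix(C_2/C_2)\;\cong\;\bigl(H_n(B^k_*(R,S,P))\bigr)^{C_2}. \]

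Finally I would note that $B^k_*(R,S,S)$ is the standard free $S$-resolution of $R$, so $B^k_*(R,S,P)=B^k_*(R,S,S)\otimes_S P$ computes $\Tor^S_*(R,P)$; since $P$ is flat over $S$ by the first step, this vanishes for $n>0$. (Equivalently: $B^k_*(R,S,S)$ has $H_0=R$ and $H_{>0}=0$, and tensoring over $S$ with the flat module $P$ is exact and hence preserves this vanishing.) Combining this with the displayed isomorphism gives $\pi_n\,\und{B^k_*(R,R\otimes_k R,P)}^\fix(C_2/C_2)=0$ for all $n>0$, as claimed.
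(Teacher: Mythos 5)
Your argument is correct, but it is not the route the paper takes, so let me compare. The paper's proof reduces to the case $P=R^{ie}$ (projectives are retracts of sums of copies of $R^{ie}$, and the construction respects sums and retracts), and then computes the fixed points levelwise via the isomorphism $(D\otimes_k k[C_2])^{C_2}\cong D$ for $2$ invertible; this identifies the fixed-point complex with the bar construction $B^k(R,R\otimes_k R,R\otimes_k R)$ itself, whose positive homotopy vanishes by the extra degeneracy. You instead keep $P$ general, observe that $R^{ie}$ is free of rank two over $S=R\otimes_k R$ so that $P$ is projective, hence flat, over $S$, and then commute $C_2$-fixed points past homology using exactness of $(-)^{C_2}=\tfrac{1+\tau}{2}(-)$ when $2$ is invertible, reducing everything to $\Tor^S_{>0}(R,P)=0$. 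Your interchange step is legitimate (the diagonal action is simplicial because $\tau(s\cdot p)=\tau(s)\cdot(\tau p)$ in the twisted group algebra, and $2$ acts invertibly on every level since it does on $R$), and there is no circularity: your Tor groups are over $S$, not over $R^{ie}$, so the axiomatic characterization in Proposition \ref{prop:bistor} is untouched. What each approach buys: the paper's computation is more explicit and identifies the entire fixed-point simplicial object for $P=R^{ie}$, which is in the spirit of its Mackey-functor calculations; yours is shorter, avoids the reduction to the free case, and in fact proves the vanishing for any $R^{ie}$-module that is flat over $S$. One small inaccuracy in wording: $B^k_*(R,S,S)$ is not a \emph{free} $S$-resolution in general (its levels are extended modules $(R\otimes_k S^{\otimes n})\otimes_k S$, flat over $S$ under the standing flatness of $R$ over $k$), but your parenthetical argument --- $H_0=R$, $H_{>0}=0$ by the extra degeneracy, and $-\otimes_S P$ exact for flat $P$ --- makes this immaterial.
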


\begin{proof}
In the category of $R^{ie}$-modules, $R^{ie}$ is a projective
generator and every module can be written as a quotient
of a direct sum of copies of $R^{ie}$. Our construction sends
a direct sum of modules to a direct sum of simplicial objects, yielding
a direct sum of associated chain complexes. Retracts of
modules give retracts of the associated chain complexes.  It therefore
suffices to  check the claim for $P=R^{ie}$. 

If $D$ is any $k$-module with a $C_2$-action such that $2$ acts
invertibly on $D$, then there is an
isomorphism
\[ (D \otimes_k  k[C_2])^{C_2} \cong D \]
where on the left hand side we consider the diagonal $C_2$-action:
First note that $D \otimes_k  k[C_2]$ with the diagonal action is
isomorphic to $D \otimes_k  k[C_2]$ where the $C_2$-action is only on
the right-hand factor. The isomorphism $\psi \colon D \otimes_k  k[C_2]
\ra D \otimes_k  k[C_2]$ sends a generator $d \otimes \tau^i$ to
$\tau^{-i}d \otimes \tau^i$. Then, as $2$ acts invertibly, we have
\[ (D \otimes_k  k[C_2])^{C_2} \cong (D \otimes_k  k[C_2])_{C_2} = (D
  \otimes_k  k[C_2]) \otimes_{ k[C_2]}  k. \]
So in total, $(D \otimes_k  k[C_2])^{C_2} \cong D$. 

Therefore, in every simplicial degree $n$ we can identify
\[  \und{B^k_n(R, R\otimes_k R, R^{ie})}^\fix(C_2/C_2) = (R \otimes_k (R
  \otimes_k R)^{\otimes_k n} \otimes_k (R \otimes_k R \otimes_k  k[C_2]))^{C_2}\]
with $R \otimes_k (R
  \otimes_k R)^{\otimes_k n} \otimes_k (R \otimes_k R)$. But then we are left
  with the bar construction $B^k(R, R \otimes_k R, R \otimes_k R)$
  and this has trivial homotopy groups in positive degrees. 
  
\end{proof}  

\begin{prop} \label{prop:bistor}
Assume that $R$ is a commutative $k$-algebra with a $k$-linear involution such that $2$
is invertible in $R$ and assume that $M$ is an
involutive $R$-bimodule. Then
\[ \pi_*\und{B^k(R, R\otimes_k R, M)}^\fix(C_2/C_2) \cong
  \ihh^k_*(R;M). \] 
\end{prop}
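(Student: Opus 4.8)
The plan is to prove Proposition \ref{prop:bistor} by the standard dimension-shifting (effaceability) argument for $\Tor$, using the three preceding lemmas as the three axioms that characterize a $\delta$-functor. Concretely, one checks that the sequence of functors $M \mapsto \pi_*\und{B^k(R, R\otimes_k R, M)}^\fix(C_2/C_2)$ from $R^{ie}$-modules to graded $k$-modules and the sequence $M \mapsto \ihh^k_*(R;M) = \Tor_*^{R^{ie}}(R;M)$ are both homological $\delta$-functors which agree in degree $0$ and vanish on projectives in positive degrees; hence they are naturally isomorphic.

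First I would record the three ingredients: Lemma \ref{lem:h0} gives the agreement in degree zero, $\pi_0\und{B^k(R, R\otimes_k R, M)}^\fix(C_2/C_2) \cong R \otimes_{R^{ie}} M \cong \Tor_0^{R^{ie}}(R;M)$; Lemma \ref{lem:ses} supplies the long exact sequence and connecting homomorphisms making $\pi_*\und{B^k(R, R\otimes_k R, -)}^\fix(C_2/C_2)$ a $\delta$-functor (one should note the naturality of the connecting maps, which follows from the naturality of the snake lemma applied levelwise); and Lemma \ref{lem:proj} shows this $\delta$-functor is effaceable, i.e.\ vanishes in positive degrees on projective $R^{ie}$-modules. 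On the other side, $\Tor_*^{R^{ie}}(R;-)$ is the universal example of such a $\delta$-functor. Then I would invoke the uniqueness theorem for effaceable homological $\delta$-functors: given a projective resolution $P_\bullet \to M$, break it into short exact sequences, use effaceability to identify $\pi_n\und{B^k(R, R\otimes_k R, M)}^\fix(C_2/C_2)$ with the homology of $\pi_0$ applied to $P_\bullet$, and conclude the isomorphism with $\Tor_n^{R^{ie}}(R;M)$. Naturality in $M$ comes for free from the construction of the comparison maps.

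The main obstacle — really the only non-formal point — is checking that the comparison isomorphism is natural and compatible with the connecting homomorphisms on both sides, so that the $\delta$-functor machinery actually applies rather than just giving an abstract isomorphism in each fixed degree. Here one uses that the map $R \otimes_{R^{ie}} M \xrightarrow{\cong} \pi_0\und{B^k(R, R\otimes_k R, M)}^\fix(C_2/C_2)$ from Lemma \ref{lem:h0} is natural in $M$ (it was built from the natural pushout description in \cite[Proposition 2.4.1]{fvg} and the natural identification of fixed points with coinvariants), and that both long exact sequences arise from genuinely functorial short exact sequences of chain complexes, so the standard naturality of connecting maps applies. Everything else is the textbook comparison of effaceable $\delta$-functors, which I would simply cite.

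Finally, once Proposition \ref{prop:bistor} is in hand, Theorem \ref{thm:ihh} follows by taking $M = R = \Af(C_2/e)$ viewed as an involutive $R$-bimodule, combining with the identification $\cL^{C_2}_{S^\sigma}(\Af) \cong \und{B(R, R\otimes R, R)}^\fix$ of Theorem \ref{thm:barfix}, and Theorem \ref{thm:ihhk} follows the same way with the relative bar construction over $k$ in place of over $\Z$, using Theorem \ref{thm:relativeisgood} together with \eqref{eq:compare-flip}.
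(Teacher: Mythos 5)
Your proposal is correct and follows essentially the same route as the paper: the authors likewise deduce Proposition \ref{prop:bistor} by observing that Lemmata \ref{lem:h0}, \ref{lem:ses} and \ref{lem:proj} give $\pi_*\und{B^k(R,R\otimes_k R,-)}^\fix(C_2/C_2)$ the same axiomatic characterization as $\Tor_*^{R^{ie}}(R;-)$, i.e.\ the standard comparison of effaceable homological $\delta$-functors that you spell out. Your added remarks on naturality of the degree-zero identification and of the connecting maps, and the deduction of Theorems \ref{thm:ihh} and \ref{thm:ihhk}, match the paper's intent.
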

\begin{proof}
  Lemmata \ref{lem:h0}, \ref{lem:ses} and \ref{lem:proj} imply that
  $\pi_*\und{B^k(R, R\otimes_k R, -)}^\fix(C_2/C_2)$ has the
  same axiomatic description as $\Tor_*^{R^{ie}}(R;-)$.
\end{proof}

\begin{proof}[Proof of Theorems \ref{thm:ihh} and \ref{thm:ihhk}]
Theorem \ref{thm:ihh} is a special case of Proposition
\ref{prop:bistor} working with $k=\Z$ (although we are working over
the $C_2$-Burnside Tambara functor, not over $\Z^c$) and with $M =
R$. Theorem \ref{thm:ihhk} is the relative version. 
\end{proof}

\begin{rem}
Graves states a
comparison result in \cite[Theorem 9.1]{graves} between reflexive
homology, $\hr^{+,k}_*(A;M)$, and involutive Hochschild homology,
$\ihh_*^k(A;M)$. The assumptions are slightly too restrictive there:
Fern\`andez-Val\`encia and Giansiracusa prove in \cite[Proposition
3.3.3]{fvg} that $\ihh^k_*(A;M) \cong \hh^k_*(A;M)_{C_2}$ if the
characteristic of the ground field is different from $2$ and Graves
shows in \cite[Proposition 2.4]{graves}, that $\hh^k_*(A;M)_{C_2} \cong
\hr^{+,k}_*(A;M)$ if $2$ is invertible in the ground ring. The assumption
on $A$ being projective as a $k[C_2]$-module comes for free if we work
over a field of characteristic different from $2$ thanks to Maschke's
theorem. For an arbitrary ring $R$, we also get that an arbitrary
$R[G]$-module $M$ is projective if $M$ is projective as an $R$-module
and if $|G|$ is invertible in $R$ \cite[Proposition 4.4]{merling}. 

\end{rem}

\begin{rem}
  If a finite group $G$ carries a homomorphism $\varepsilon \colon G \ra C_2$,
  then one can consider an associated crossed simplicial group and the
  corresponding (co)homology theory, see for instance \cite{kp,akmp}.

For an arbitrary finite group $C_2 \neq G \neq \{e\}$ without an interesting 
homomorphism to $C_2$, there is only the version of an associated
crossed simplicial group by viewing $G$ as a 
constant simplicial group because there is no meaningful way in which
$G$ can act on the simplicial category, as the automorphisms of
$\Delta$ are isomorphic to $C_2$ (see for instance \cite[Proposition
1.13]{dk}).

On the other hand, if $G$ is a group with $n$ elements $\{g_1, \ldots,
g_n\}$, we can consider the 
\emph{unreduced suspension of $G$, $SG$}. This is the graph
\[ \xymatrix{
    & \bullet & \\
    &  & \\
    & \ldots & \\
    & \ar@/^10ex/[uuu]^{g_1} \ar@/^4ex/[uuu]^{g_2}\ar@/_4ex/[uuu]_{g_{n-1}}
    \ar@/_10ex/[uuu]_{g_n} \bullet & }\]
and the group $G$ acts by sending an element $g \in G$ and an edge
labelled by $g_i$ to the edge $gg_i$. We can model this graph by a finite
simplicial $G$-set. 

Thus if $R$ is a commutative algebra with a $G$-action, then
\begin{equation} \label{eq:ghomology}
  \pi_*\cL^G_{SG}(\und{R}^\fix)(G/G)\end{equation}
is a perfectly fine homology theory.  
We propose \eqref{eq:ghomology} as a generalization of reflexive
homology to arbitrary finite groups, at least if $|G|$ is invertible,
and will investigate its properties in future work. 

We are grateful to the referee who pointed out that Hahn and Wilson
\cite[Question 6.3]{hw} 
suggest that $H\und{\F_p} \wedge^L_{N_e^{C_p}H\F_p} H\und{\F_p}$ might
be relevant for the Segal conjecture for the group $C_p$. Here,
$H\F_p$ is the Eilenberg-MacLane spectrum of $\F_p$ and $H\und{\F_p}$
denotes the $C_p$-equivariant Eilenberg-MacLane spectrum associated to
the constant Tambara functor on $\F_p$. The spectral analogue of our Loday
construction, $\cL^{C_p}_{SC_p}(H\und{\F_p})$, can be identified with
$H\und{\F_p} \wedge^L_{N_e^{C_p}H\F_p} H\und{\F_p} = B(H\und{\F_p}, 
{N_e^{C_p}H\F_p}, H\und{\F_p}^)$, similar to \cite[\S 7.5]{lrz-gloday}. 

\end{rem}

\section{The cases $\und{\F_2}^c$ and $\und{\Z}^c$} \label{sec:exs}

For our results we had to assume that $2$ is invertible in our
commutative ring and that the
underlying abelian group is flat. So it is a natural
question to ask what happens if we drop these assumptions. We first study the
simplest and most extreme case. 

\subsection{Comparison for $\und{\F_2}^c$}
  We consider $\F_2$ with the trivial $C_2$-action, so the fixed point
  Tambara functor is the constant Tambara functor: $\und{\F_2}^c =
  \und{\F_2}^\fix$.  Graves calculates
  reflexive homology of the ground ring in \cite[Proposition
  5.1]{graves} and in the case of $\F_2$ we obtain 
  \[ \hr^{+,\F_2}_*(\F_2) \cong H_*(BC_2, \F_2) \]
  and this is $\F_2$ in all non-negative degrees. 
  Note that here it doesn't matter whether we view $\F_2$ as a
  commutative $\F_2$-algebra or as a commutative ring (a commutative
  $\Z$-algebra). 
  Similarly, we can calculate the involutive Hochschild homology of
  $\F_2$ as an involutive $\F_2$-algebra (or as a commutative
  $\Z$-algebra) and obtain 
  \[ \ihh^{\F_2}_*(\F_2; \F_2) = \Tor_n^{\F_2[C_2]}(\F_2, \F_2) \cong
    H_*(BC_2; \F_2). \]
Hence, involutive Hochschild homology and reflexive homology agree in
this case.

\bigskip

If we compare this to the $2$-sided bar construction $B(\F_2,
\F_2\otimes \F_2, \F_2) \cong  B^{\F_2}(\F_2,
\F_2\otimes \F_2, \F_2)$, then this bar construction is isomorphic to 
the constant simplicial object with value $\F_2$ and therefore here we
obtain
\[ \pi_n\und{B(\F_2, \F_2\otimes \F_2, \F_2)}^\fix(C_2/C_2) = \pi_nB(\F_2, \F_2\otimes \F_2,
  \F_2) = \begin{cases} \F_2, & n = 0, \\ 0, &
    \text{otherwise.} \end{cases} \]
  Hence in this case $\pi_*\und{B(\F_2, \F_2\otimes \F_2,
    \F_2)}^\fix(C_2/C_2)$  agrees neither  with reflexive homology
  nor with involutive Hochschild homology.

\bigskip
What about $\pi_*B(\und{\F_2}^c, N_e^{C_2}(\F_2),
  \und{\F_2}^c)(C_2/C_2)$? 
  Note that  
  \[N_e^{C_2}(\F_2)(C_2/C_2) \cong \Z/4\Z \quad  \text{  and } 
N_e^{C_2}(\F_2)(C_2/e) \cong \F_2. \] 
In $\und{\F_2}^c \Box N_e^{C_2}(\F_2)$ we obtain:
\begin{align*}
C_2/C_2: & \quad  \left(\F_2 \otimes \Z/4\Z \oplus (\F_2 \otimes \F_2 \otimes
  \F_2)/C_2\right)/\mathrm{FR} \\
C_2/e: & \quad \F_2 \otimes \F_2 \otimes
  \F_2 \cong \F_2
\end{align*}
The $C_2$-Weyl action is trivial on $\F_2 \otimes \F_2 \otimes \F_2 \cong
\F_2$. Frobenius reciprocity yields
\[ [1 \otimes 1 \otimes 1] = [\res(1) \otimes 1 \otimes 1] \sim 1
  \otimes \trace(1 \otimes 1) = 2 \cdot 1 \otimes 1 \otimes 1 = 0\]
so at the $C_2/C_2$-level we are left with one copy of $\F_2$ and we
obtain $\und{\F_2}^c \Box N_e^{C_2}(\F_2) \cong \und{\F_2}^c$.

This identifies $B(\und{\F_2}^c, N_e^{C_2}(\F_2),
  \und{\F_2}^c)(C_2/C_2)$ with the constant simplicial object
with value $\F_2$, and therefore
\[ \pi_n\cL^{C_2}_{S^\sigma}(\und{\F_2}^c)(C_2/C_2) \cong \begin{cases} \F_2, & n = 0, \\ 0, &
    \text{otherwise.} \end{cases} \]

At the free orbit, we also get the constant simplicial object
with value $\F_2$,  and in total we get an isomorphism 
of simplicial Tambara functors between 
$\cL^{C_2}_{S^\sigma}(\und{\F_2}^c)$  and $\und{B(\F_2, \F_2\otimes
  \F_2, \F_2)}^c$. 

\subsection{Comparison for $\und{\Z}^c$}
We consider the ring of integers and this only carries a trivial
$C_2$-action. We know that norm restriction of $\und{\Z}^c$ gives the
$C_2$-Burnside Tambara functor, $N_e^{C_2}i_e^*\und{\Z}^c \cong
\und{A}$. This is the monoidal unit for the $\Box$-product.
We showed in \cite[Lemma 5.1]{lrz} that for two arbitrary commutative 
rings $A$ and $B$, $\und{A}^c \Box \und{B}^c \cong \und{(A \otimes
  B)}^c$ and hence
\[ \und{\Z}^c \Box \und{\Z}^c \cong \und{(\Z \otimes \Z)}^c \cong
  \und{\Z}^c. \]

\begin{prop}
  There is an isomorphism of simplicial $C_2$-Tambara functors
  \[ \cL_{S^\sigma}^{C_2}(\und{\Z}^c) \cong \und{\Z}^c\]
  where the right-hand side denotes the constant simplicial
  $C_2$-Tambara functor with value $\und{\Z}^c$. 
\end{prop}
\begin{proof}
  By the above arguments we get for an arbitrary simplicial degree $n$:
\begin{align*}
\cL_{S^\sigma}^{C_2}(\und{\Z}^c)_n & = \und{\Z}^c \Box
                                     (N_e^{C_2}i_e^*\und{\Z}^c)^{\Box n}
                                     \Box \und{\Z}^c \\
                                   & \cong \und{\Z}^c \Box \und{\Z}^c \\
  & \cong \und{\Z}^c.
\end{align*}
  The simplicial structure maps induce the identity maps under these
  isomorphisms. 
\end{proof}
\begin{cor}
  The homotopy groups of $\cL_{S^\sigma}^{C_2}(\und{\Z}^c)$ are
  \[ \pi_*(\cL_{S^\sigma}^{C_2}(\und{\Z}^c)) \cong \begin{cases}
      \und{\Z}^c, & *=0, \\
      0, & * > 0.\end{cases}\] 
\end{cor}

\begin{cor}
  For the $C_2$-Tambara functor $\und{\Z}^c$ the homotopy groups
  \[ \pi_*(\cL_{S^\sigma}^{C_2}(\und{\Z}^c))(C_2/C_2) \]
  are neither isomorphic to $\hr^{+,\Z}_*(\Z)$ nor to $\ihh^\Z_*(\Z)$. 
\end{cor}
\begin{proof}
We saw above that $\pi_*(\cL_{S^\sigma}^{C_2}(\und{\Z}^c))(C_2/C_2)$
is concentrated in degree $*=0$ with value $\Z$ whereas
$\hr^{+,\Z}_*(\Z)$ and $\ihh^\Z_*(\Z)$ both give $H_*(C_2; \Z)$. 
\end{proof}
\section{The case of rings and algebras with anti-involution} \label{sec:Green}
We assume now that $R$ is an associative ring with
anti-involution. In this case the fixed point Mackey functor $\und{R}^\fix$ 
is \emph{not} an associative Green functor: For $a, b \in R^{C_2}$ we get that
$\tau(ab) = \bar{b}\bar{a} = ba$ and as $a$ and $b$ do not necessarily
commute, $ab$ is not, in general, a fixed point. But $\Af$ \emph{does}
carry the structure of a discrete $E_\sigma$-ring \cite[Example
6.12]{akgh} and it is also a Hermitian Mackey functor in the
sense of \cite[Definition 1.1.1]{do}

We can still define a
replacement of the norm-restriction object that we call
$\tilde{N}_e^{C_2}i_e^*(\und{R}^\fix)$ in order to avoid confusion
with the commutative case. We claim that this can be done in the
setting of associative $C_2$-Green functors. 

\begin{defn} We define $\tilde{N}_e^{C_2}i_e^*(\und{R}^\fix)$ at the
  free level as 
  \[ \tilde{N}_e^{C_2}i_e^*(\und{R}^\fix)(C_2/e) := R \otimes R^\op \]
  and at the trivial orbit $C_2/C_2$ we define
  \[ \tilde{N}_e^{C_2}i_e^*(\und{R}^\fix)(C_2/C_2) := (\Z\{R\} \oplus (R \otimes
    R^\op)/C_2)/\mathrm{TR},  \]
  where the Tambara reciprocity relation, $\mathrm{TR}$,
  identifies $\{a+b\} \sim \{ a\} + \{b\} +[a\otimes \bar{b}]$ for all
  $a,b\in R$, just as in  
  the norm-restriction construction in the commutative case.

  The restriction map is
  \[ \res\{a\} := a \otimes \bar{a}, \qquad \res[a \otimes b] := a
    \otimes b + \bar{b} \otimes \bar{a} \]
  and the transfer sends $a \otimes b$ to
  \[ \tr(a \otimes b) := [a \otimes b]. \]
\end{defn}  
The definition above is completely analogous to the commutative case, so indeed,
this \emph{does} define a $C_2$-Mackey functor.

Note that this definition of the norm agrees with the norm defined in
\cite[Definition 3.13]{hm} as a $C_2$-Mackey functor. In \cite{hm}
Hill and Mazur 
use the notation $N^G_H$ for the norm functor for Mackey functors and
they use $\mathcal{N}_H^G$ for the norm functor for Tambara functors. 

The norm functor for Mackey functors and the restriction functor are
both strong symmetric monoidal. In our case we start with a ring
with an anti-involution and we consider the norm functor with a 
non-trivial Weyl action on the tensor factors of $R$. However, it is
straightforward to check that this additional Weyl action does not
interfere with the product:  
\begin{lem}
We can endow $\tilde{N}_e^{C_2}i_e^*(\und{R}^\fix) =
\tilde{N}_e^{C_2}R$ with the structure of an associative $C_2$-Green functor. \qed
\end{lem}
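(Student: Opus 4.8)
The plan is to equip $\tilde N_e^{C_2}i_e^*(\und R^\fix)$ with a $C_2$-Green functor structure by transporting the multiplicative bookkeeping of the commutative norm-restriction into the associative setting, using that the simplicial model of $S^\sigma$ only ever requires glued copies of the ordered $1$-simplex. First I would record the Mackey functor structure: on $R\otimes R^\op$ the Weyl $C_2$ acts by $\tau(a\otimes b)=\bar b\otimes\bar a$, $\res(\{a\})=a\otimes\bar a$, $\res([a\otimes b])=a\otimes b+\bar b\otimes\bar a$, $\trace(a\otimes b)=[a\otimes b]$, and $\norm(a\otimes b)=\{a\bar b\}$; one checks these are well defined modulo $\mathrm{TR}$ exactly as in the commutative case since $\mathrm{TR}$ involves only the additive structure. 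The multiplication at the free level $C_2/e$ is the usual ring multiplication on $R\otimes R^\op$, i.e. $(a\otimes b)(c\otimes d)=ac\otimes db$; associativity is inherited from $R$ and $R^\op$, and this multiplication is $C_2$-equivariant since $\tau((a\otimes b)(c\otimes d))=\overline{db}\otimes\overline{ac}=\bar b\bar d\otimes\bar c\bar a=(\bar b\otimes\bar a)(\bar d\otimes\bar c)$.

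The substantive part is defining the multiplication at $C_2/C_2$ and checking Frobenius reciprocity, i.e. that the trace is a map of bimodules over the fixed level. Guided by Lemma \ref{lem:prodnorm} and the displayed product formula there, I would posit
\[ [a\otimes b]\cdot[c\otimes d] = [ac\otimes db] + [a\bar d\otimes\bar c b], \quad \{a\}\cdot[c\otimes d] := \res(\{a\})\cdot[c\otimes d] = [ac\otimes d\bar a]+[a\bar d\otimes\bar c\bar a],\]
and $\{a\}\cdot\{c\}:=\{ac\}$ — but one must verify this last formula is forced and consistent. The clean way is to first define the multiplication only on the $(R\otimes R^\op)/C_2$-summand by the boxed formula, check it descends through $\mathrm{TR}$ and is associative and $C_2/C_2$-commutative-where-needed, then extend to the $\Z\{R\}$-summand via $\norm$ and $\res$: set $\{a\}\cdot y := \trace(\res(\{a\})\cdot\res(y))/2$ won't work without $2$ inverted, so instead one uses that $\mathrm{TR}$ already identifies $\{a\}+\{b\}-\{a+b\}$ with $[a\otimes\bar b]$, and one defines the full product by specifying it on generators and verifying all $\mathrm{TR}$-relations are respected — this is the bookkeeping-heavy step. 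Then Frobenius reciprocity $\res([a\otimes b]\cdot\trace(c\otimes d)) = \res([a\otimes b])\cdot\res(\trace(c\otimes d))$ is checked by direct expansion, and likewise the projection formula for $\norm$; these computations are routine but must be done carefully because $R$ is noncommutative.

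I expect the main obstacle to be \emph{well-definedness and associativity of the $C_2/C_2$-multiplication in the presence of the $\Z\{R\}$-summand}, since $\norm$ is only additive-to-multiplicative (it satisfies $\norm(x+y)=\norm(x)+\norm(y)+\trace(x\bar y)$, not additivity), so one cannot simply define the product on $\Z\{R\}$ by linearity and reduce to the other summand. The resolution is that $\mathrm{TR}$ is exactly engineered so that, working in the quotient, the relation $\{a+b\}=\{a\}+\{b\}+[a\otimes\bar b]$ makes $\norm$ multiplicative after passing to the Green functor; concretely I would define the product on all of $\tilde N_e^{C_2}i_e^*(\und R^\fix)(C_2/C_2)$ by lifting to the free group on the generators $\{a\}$ and $[a\otimes b]$, giving the product by the formulas above together with $\{a\}\cdot\{c\}=\{a c\}$ — justified because $\res(\{a\}\{c\})$ should be $\res(\{a\})\res(\{c\})=(a\otimes\bar a)(c\otimes\bar c)=ac\otimes\bar a\bar c=ac\otimes\overline{ac}=\res(\{ac\})$ — and then verifying that the two-sided ideal generated by the $\mathrm{TR}$-relations is closed under this multiplication, which reduces to the single identity $\{a+b\}\cdot y-\{a\}\cdot y-\{b\}\cdot y = [a\otimes\bar b]\cdot y$ for $y$ a generator, checked case by case. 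Associativity is then inherited from associativity at the free level via the fact that $\res$ is an injective ring map into $R\otimes R^\op$ on the relevant summand, so equality of two triple products can be tested after applying $\res$; the only triple products not visibly controlled this way are those with three $\{-\}$-factors, handled by the formula $\{a\}\{b\}\{c\}=\{abc\}$ and the computation above.
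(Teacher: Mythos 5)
Your overall strategy (define the product on the generators $\{a\}$ and $[a\otimes b]$, check compatibility with $\mathrm{TR}$ and with the Weyl action, check that $\res$ is a ring map and that Frobenius reciprocity holds) is the paper's strategy, and your formulas $[a\otimes b][c\otimes d]=[ac\otimes db]+[a\bar d\otimes\bar c b]$ and $\{a\}\{b\}=\{ab\}$ agree with the paper's. But your mixed product is wrong. Frobenius reciprocity forces $\{a\}\cdot[c\otimes d]=\{a\}\cdot\tr(c\otimes d)=\tr\bigl(\res(\{a\})\cdot(c\otimes d)\bigr)=[ac\otimes d\bar a]$, a \emph{single} term (and similarly $[a\otimes b]\cdot\{c\}=[ac\otimes \bar c b]$, which you never specify). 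What you wrote, $[ac\otimes d\bar a]+[a\bar d\otimes\bar c\bar a]$, is $\tr\bigl(\res(\{a\})\cdot\res([c\otimes d])\bigr)$, i.e.\ you have multiplied by $[a\otimes\bar a]=\tr(\res\{a\})$ rather than by $\{a\}$. With your formula the unit axiom already fails: $\{1\}\cdot[c\otimes d]=[c\otimes d]+[\bar d\otimes\bar c]=2[c\otimes d]$, and $\res$ is not multiplicative (your product restricts to twice $\res\{a\}\cdot\res[c\otimes d]$). This is not a cosmetic slip, since the whole point of the construction is that $\{a\}$ behaves like a "norm class" whose multiplication is half of that of its transfer, and no division by $2$ is available.

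Two further steps of your plan do not go through. First, you propose to test associativity after applying $\res$, "since $\res$ is injective on the relevant summand"; but $\tilde N_e^{C_2}i_e^*(\und R^\fix)$ is not a fixed-point Mackey functor and its restriction is not injective in general: the lemma assumes neither commutativity nor that $2$ is invertible, and already for trivial involutions the value at $C_2/C_2$ is Burnside-like (e.g.\ $N_e^{C_2}(\F_2)(C_2/C_2)\cong\Z/4$ restricting onto $\F_2$), so kernel elements abound. The paper simply verifies associativity, $\mathrm{TR}$-compatibility of the product, multiplicativity of $\res$, and the unit map from the Burnside Mackey functor ($1\mapsto\{1\}$, $t\mapsto[1\otimes 1]$) by direct computation. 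Second, you include $\norm(a\otimes b)=\{a\bar b\}$ as part of the structure and plan to check a projection formula for it; but the statement asks only for a Green functor precisely because no multiplicative norm exists in the associative setting: $\norm\bigl((a\otimes b)(c\otimes d)\bigr)=\{ac\bar b\bar d\}$ whereas $\norm(a\otimes b)\norm(c\otimes d)=\{a\bar b c\bar d\}$, and these differ for noncommutative $R$. That part of the plan should be dropped rather than dismissed as routine.
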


\begin{prop}
The $C_2$-Mackey functor $\und{R}^\fix$ is an  
$\tilde{N}_e^{C_2}i_e^*(\und{R}^\fix)$-bimodule. 
\end{prop}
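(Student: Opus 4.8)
The plan is to write down explicit left and right action maps of the Green functor $\tilde{N}_e^{C_2}i_e^*(\und{R}^\fix)$ on $\und{R}^\fix$ and then to verify the module axioms together with the Mackey-functor compatibilities. Together these give exactly the bimodule structure needed to form the two-sided bar construction $B(\und{R}^\fix,\tilde{N}_e^{C_2}i_e^*(\und{R}^\fix),\und{R}^\fix)$, the associative analog of $\cL^{C_2}_{S^\sigma}(\und{R}^\fix)$. In the commutative case the module structure came from the counit $N_e^{C_2}i_e^*(\und{R}^\fix)\to\und{R}^\fix$ of the norm adjunction; there is no such counit here, so the actions have to be produced by hand. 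The key simplification is that $\res$ is an inclusion on the fixed-point Mackey functor $\und{R}^\fix$, so the action at the trivial orbit $C_2/C_2$ is forced by the action at the free orbit $C_2/e$.

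I would define the left action at $C_2/e$ to be the standard module structure of $R=\und{R}^\fix(C_2/e)$ over $R\otimes R^{\op}=\tilde{N}_e^{C_2}i_e^*(\und{R}^\fix)(C_2/e)$, namely $(a\otimes b)\cdot x:=axb$. This forces, at $C_2/C_2$,
\[ \{a\}\cdot x:=ax\bar{a},\qquad [a\otimes b]\cdot x:=axb+\bar{b}x\bar{a}, \]
the unique elements of $R^{C_2}$ whose restrictions are obtained by letting $\res\{a\}=a\otimes\bar{a}$, respectively $\res[a\otimes b]=a\otimes b+\bar{b}\otimes\bar{a}$, act at the free level. The right action is $x\cdot(a\otimes b):=\bar{a}x\bar{b}$ at $C_2/e$ and, again forced,
\[ x\cdot\{a\}:=\bar{a}xa,\qquad x\cdot[a\otimes b]:=\bar{a}x\bar{b}+bxa \]
at $C_2/C_2$. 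The two actions differ by the anti-involution; this is precisely the choice that makes the additional $C_2$-action on the bar construction commute with its two outer face maps, one of which uses the right action and the other the left one.

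Then comes the verification. One checks that the trivial-orbit formulas are well defined: they are symmetric under the Weyl identification $[a\otimes b]=[\bar{b}\otimes\bar{a}]$, they respect Tambara reciprocity (for instance $\{a+b\}\cdot x=\{a\}\cdot x+\{b\}\cdot x+[a\otimes\bar{b}]\cdot x$, and likewise on the right), and every displayed element lies in the fixed set $R^{C_2}$ by $\overline{amb}=\bar{b}\bar{m}\bar{a}$. Unitality holds because $\{1\}$ is the unit of the Green functor and $(1\otimes1)\cdot x=x$. Associativity at $C_2/e$ is the associativity of $R$ as an $R\otimes R^{\op}$-module; at $C_2/C_2$ one matches the mixed cases against the Green-functor multiplication, e.g.\ $([a\otimes b]\{c\})\cdot x=[ac\otimes\bar{c}b]\cdot x$ expands to the same expression as $[a\otimes b]\cdot(\{c\}\cdot x)$, and similarly for $\{a\}[b\otimes c]$ and $[a\otimes b][c\otimes d]$. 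Finally there are the Mackey-functor conditions: compatibility with the conjugation maps, $\overline{s\cdot x}=\tau(s)\cdot\bar{x}$ and $\overline{x\cdot s}=\bar{x}\cdot\tau(s)$ with $\tau(a\otimes b)=\bar{b}\otimes\bar{a}$ the Weyl action at the free level (the same equivariance that pinned the formulas down); compatibility with $\res$; and Frobenius reciprocity, which is what makes the action maps descend from the relevant box products, concretely $\tr(\res(s)\cdot n)=s\cdot\tr(n)$ and $\tr(n\cdot\res(s))=\tr(n)\cdot s$ for $s$ at $C_2/C_2$ and $n$ at $C_2/e$, each following by expanding both sides with $\tr(m)=m+\bar{m}$.

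I expect the only real obstacle to be organizational. The checks at the trivial orbit must be done simultaneously for well-definedness through Tambara reciprocity and the Weyl identification, for landing in the non-ring fixed set $R^{C_2}$, and for compatibility with $\res$, $\tr$ and the three mixed Green-functor products; each individual identity just amounts to moving bars past one another via $\overline{xy}=\bar{y}\bar{x}$, but there are many. The cleanest exposition is probably to note once that $\res$ is injective on $\und{R}^\fix$, so that every identity at $C_2/C_2$ follows from the corresponding free-orbit identity together with the definitions of $\res$ and $\tr$, reducing the genuine work to the free orbit and the Frobenius relations.
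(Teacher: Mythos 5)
Your proposal follows essentially the same route as the paper: explicit action formulas at both orbits, then verification of the Weyl, Tambara-reciprocity and Frobenius-reciprocity compatibilities, with unitality easy and associativity left as a routine (if lengthy) expansion, exactly as in the paper; your left action agrees with the paper's verbatim, and your observation that injectivity of $\res$ forces (and lets you check) the $C_2/C_2$-formulas from the free level is a nice organizational improvement on the paper's case-by-case checks. One point to flag: your right action at the free level, $x\cdot(a\otimes b)=\bar a x\bar b$, is not the paper's, which takes $x\cdot(a\otimes b)=bxa$ (the direct analogue of the counit-plus-multiplication action in the commutative case). Both are genuine right $R\otimes R^{\op}$-module structures, both are Weyl-equivariant, and both restrict to the same $C_2/C_2$-formulas $x\cdot\{a\}=\bar a x a$ and $x\cdot[a\otimes b]=bxa+\bar a x\bar b$ (the two free-level actions agree on the $\tau$-invariant elements $\res\{a\}$ and $\res[a\otimes b]$), so your structure is legitimate and is in fact isomorphic to the paper's via the conjugation automorphism of $\und{R}^\fix$; nothing downstream breaks. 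However, your stated reason for the twist --- that it is ``precisely the choice'' making the extra $C_2$-action commute with the outer face maps --- is not correct: the untwisted action $x\cdot(a\otimes b)=bxa$ is equally equivariant (check $\overline{bxa}=\bar a\bar x\bar b=\bar x\cdot(\bar b\otimes\bar a)$), so equivariance does not single out your formula, and if you want the bar construction to literally coincide with the Loday construction on $S^\sigma$ (rather than merely be isomorphic to it) you should use the paper's convention.
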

\begin{proof}
We know that 
\[     (\tilde{N}^{C_2}_ei^{*}_e \Af) \Box \Af =
   \begin{cases}
\bigl( ( \Z\{R\} \oplus (R\otimes R^\op)/C_2 ) /\mathrm{TR}  \otimes R^{C_2} \oplus (R\otimes R^\op \otimes R)/C_2 \bigr)/\mathrm{FR}& \text{ at } C_2/C_2 \\
   R \otimes R^\op \otimes R& \text{ at } C_2/e. 
  \end{cases}
\]
We define the left $\tilde{N}^{C_2}_ei^{*}_e \Af$-module structure of $\Af$ by
\[ (a \otimes b) \otimes c \mapsto acb\]
at the free level. At the trivial level we have three types of
terms:
\begin{enumerate}
\item
  For $a \in R$ and $x \in R^{C_2}$ we send $\{a\} \otimes x$ to
  $ax\bar{a}$.
\item
Whereas for $a, b  \in R$ and $x \in R^{C_2}$ we define
  \[ [a \otimes b] \otimes x \mapsto axb +\bar{b}x\bar{a}. \]
  The resulting elements are fixed points under the anti-involution.
\item
  The $C_2$-action on $R \otimes R^\op \otimes R$ sends a generator $a
  \otimes b \otimes y$ to $\bar{b} \otimes \bar{a} \otimes
  \bar{y}$. We send a $C_2$-equivalence class $[a \otimes b \otimes
  y]$ to $ayb + \bar{b}\bar{y}\bar{a}$.
 \end{enumerate}

 We have to check that this action is well-defined and satisfies
 associativity and a unit condition.

A direct inspection shows that $[a \otimes b] \otimes x$ and
$[\bar{b} \otimes \bar{a}] \otimes x$ map to the same 
element. Similarly, the value on $[a \otimes b \otimes y]$ and
$[\bar{b} \otimes \bar{a} \otimes \bar{y}]$ agrees. It is also
straightforward to see that the module 
  structure respects Tambara reciprocity. For Frobenius reciprocity we
  have to compare three expressions:

  \begin{itemize}
\item 
$[a \otimes b] \otimes x$ is $\tr(a \otimes b) \otimes x$ and this is
identified with $[a \otimes b \otimes \res(x)] = [a \otimes b \otimes
x]$. Both terms are mapped to $axb + \bar{b}x\bar{a}$ because $x$ is a
fixed point.
\item
  A term $\{a\} \otimes \tr(y)$ is sent to $a(y + \bar{y})\bar{a}$. It
  is identified with $[\res\{a\} \otimes y] = [a \otimes \bar{a}
  \otimes y]$ and this goes to $ay\bar{a}+ a\bar{y}\bar{a}$. 
\item
  We have
  \[ [a \otimes b] \otimes (y + \bar{y}) = [a \otimes b] \otimes
    \tr(y) = \res([a \otimes b]) \otimes y. \]
  All these terms are mapped to $ayb + \bar{b}y\bar{a} + a\bar{y}b +
  \bar{b}\bar{y}\bar{a}$. 
\end{itemize}

As $\{1\}$ acts neutrally at the trivial level and as $1 \otimes 1$
acts neutrally at the free level, the unit condition is satisfied. Associativity can be proven with a very tedious calculation.

This shows that the map $\tilde{N}_e^{C_2}i_e^*\Af \Box \Af \ra \Af$ defined
above yields a left $\tilde{N}_e^{C_2}i_e^*\Af$-module structure on $\Af$. 

The following is a  sketch of the construction of the right
$\tilde{N}_e^{C_2}i_e^*\Af$-module structure on $\Af$: We have to
define $\Af \Box \tilde{N}_e^{C_2}i_e^*\Af 
\ra \Af$, that is: a map from 
\[ \Af \Box (\tilde{N}^{C_2}_ei^{*}_e \Af) =
   \begin{cases}
R^{C_2} \otimes \bigl( ( \Z\{R\} \oplus (R\otimes R^\op)/C_2 )
/\mathrm{TR} \oplus (R \otimes R\otimes R^\op)/C_2 \bigr)/\mathrm{FR}& \text{ at } C_2/C_2 \\
R \otimes R \otimes R^\op & \text{ at } C_2/e
  \end{cases}
\]
to $\Af$.
At the free level we send $a \otimes (b \otimes c)$ to $cab$ and this
propagates to the trivial level where we map $[y \otimes a \otimes b]$
to $bya + \bar{a}\bar{y}\bar{b}$ and $x \otimes [a \otimes b]$ to $bxa
+ \bar{a}x\bar{b}$. A term $x \otimes \{a\}$ goes to $\bar{a}xa$. Then
a proof dual to the above shows that this indeed gives a well-defined
right module structure and that this right-module structure is
compatible with the left-module structure so that we actually obtain a
bimodule structure. 
\end{proof}

We use this bimodule structure of
$\Af$ over $\tilde{N}_e^{C_2}i_e^*(\und{R}^\fix)$ for the definition of
$\cL_{S^\sigma}^{C_2}(\Af)$ by declaring $C_2/e \otimes \Af$ to be
$\tilde{N}_e^{C_2}i_e^*(\und{R}^\fix)$ and of course $C_2/C_2 \otimes
\Af$ is just $\Af$. As the simplices in $S^\sigma$ are lined up on two
copies of $\Delta(-,[1])$, that are just glued at the endpoints, the
associativity of $R$ suffices to obtain well-defined face and degeneracy maps
and
therefore a well-defined Loday construction $\cL_{S^\sigma}^{C_2}(\Af)$. As a
simplicial $C_2$-Mackey functor, $\cL_{S^\sigma}^{C_2}(\Af)$ is isomorphic to
$B(\Af, \tilde{N}_e^{C_2}i_e^*(\und{R}^\fix), \Af)$.

We now state and prove the analogue of Theorems \ref{thm:lodayishr}
and \ref{thm:ihh}:

\begin{thm} \label{thm:anti-invring}
Assume that $R$ is an associative ring with anti-involution and that
$2$ is invertible in $R$. If the underlying abelian group of $R$ is flat, then
\[ \ihh_*^\Z(R) \cong \pi_*( \cL^{C_2}_{S^\sigma}(\und{R}^\fix
  )(C_2/C_2)) \cong \hr^{+,\Z}_*(R,R). \]  
\end{thm}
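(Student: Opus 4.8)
The plan is to follow the same strategy as in the commutative case (Theorems \ref{thm:lodayishr} and \ref{thm:ihh}), but replacing all appeals to the multiplicative structure of $\und{R}^\fix$ and to the norm by the pseudo norm-restriction term $\tilde{N}_e^{C_2}i_e^*(\und{R}^\fix)$ and its bimodule $\und{R}^\fix$. The three identifications in the statement really split into two independent comparisons: one between the homotopy of the Loday construction and reflexive homology, and one between the homotopy of the Loday construction and involutive Hochschild homology. For the first, I would run the argument of Theorem \ref{thm:resolutions} and Corollary \ref{cor:hr} verbatim: these only use associativity of $A$ (there called $A$, here $R$) and flatness over $\Z$, so nothing changes. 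For the second, I would redo Lemmata \ref{lem:h0}, \ref{lem:ses}, \ref{lem:proj} and Proposition \ref{prop:bistor} with $k = \Z$ and $M = R$, the only new ingredient being an identification of $\cL_{S^\sigma}^{C_2}(\und{R}^\fix)(C_2/C_2)$ at each simplicial level.

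The key new computation is therefore to show that, levelwise, the $C_2/C_2$-value of $\cL_{S^\sigma}^{C_2}(\und{R}^\fix) \cong B(\und{R}^\fix, \tilde{N}_e^{C_2}i_e^*(\und{R}^\fix), \und{R}^\fix)$ is isomorphic to the corresponding level of $B^\Z(R, R\otimes R^\op, R)$. Since $2$ is invertible, taking $C_2$-fixed points agrees with taking $C_2$-coinvariants and both are exact, so the only task is to identify the coinvariants of the box product in each simplicial degree. As in the proofs of Lemmata \ref{lem:fixprod} and \ref{lem:prodnorm}, the Frobenius and Tambara reciprocity relations collapse every summand of $(\und{R}^\fix \Box (\tilde{N}_e^{C_2}i_e^*\und{R}^\fix)^{\Box n} \Box \und{R}^\fix)(C_2/C_2)$ into the innermost coinvariant summand, where one is left with $(R \otimes (R\otimes R^\op)^{\otimes n} \otimes R)/C_2 \cong R \otimes (R\otimes R^\op)^{\otimes n} \otimes R / C_2$, with no residual relations because $2$ is invertible. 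I would then observe that since all identifications land in the coinvariant part, they are compatible with the (one-sided, hence associativity-only) face and degeneracy maps of the bar construction, exactly as argued below Theorem \ref{thm:barfix}. This yields an isomorphism of simplicial abelian groups $\cL_{S^\sigma}^{C_2}(\und{R}^\fix)(C_2/C_2) \cong B^\Z(R, R\otimes R^\op, R)$, hence $\pi_* \cL_{S^\sigma}^{C_2}(\und{R}^\fix)(C_2/C_2) \cong \pi_* B^\Z(R, R\otimes R^\op, R)$.

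Granting this, the first isomorphism $\ihh_*^\Z(R) \cong \pi_*(\cL^{C_2}_{S^\sigma}(\und{R}^\fix)(C_2/C_2))$ follows by checking that $\pi_* B^\Z(R, R \otimes R^\op, -)$ satisfies the same axiomatic characterization as $\Tor_*^{R^{ie}}(R;-)$: the $\pi_0$ computation is the argument of Lemma \ref{lem:h0} (using $R^{ie} = (R\otimes R^\op)[C_2]$ exactly as in the definition in section \ref{sec:ihh}), the long exact sequence is Lemma \ref{lem:ses} (exactness of tensoring with $R$ over $\Z$ plus exactness of fixed points), and the vanishing on projectives is Lemma \ref{lem:proj} with the identity $(D \otimes_\Z \Z[C_2])^{C_2} \cong D$, which reduces the case $P = R^{ie}$ to the acyclicity of $B^\Z(R, R\otimes R^\op, R\otimes R^\op)$ in positive degrees. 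The second isomorphism $\pi_*(\cL^{C_2}_{S^\sigma}(\und{R}^\fix)(C_2/C_2)) \cong \hr^{+,\Z}_*(R,R)$ follows from the levelwise identification combined with Corollary \ref{cor:hr}, since $B^\Z(R, R\otimes R^\op, R)^{C_2}$ computes $\hr^{+,\Z}_*(R,R)$.

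The main obstacle is verifying that the reciprocity relations really do collapse the box product to the naive quotient $R \otimes (R\otimes R^\op)^{\otimes n}\otimes R/C_2$ with no extra relations, and that the resulting isomorphism is natural enough in the simplicial direction to commute with the face maps — the face maps now involve the genuinely noncommutative multiplications $[a\otimes b][c\otimes d] = [ac \otimes db] + [a\bar d \otimes \bar c b]$ in $\tilde{N}_e^{C_2}i_e^*\und{R}^\fix$ and the two-sided module action on $\und{R}^\fix$, so one must check that these are precisely the maps that, under the identification, become the bar differentials of $B^\Z(R, R\otimes R^\op, R)$. This is a bookkeeping exercise rather than a conceptual difficulty, but it is where the noncommutative case genuinely diverges from the commutative proofs and must be done by hand.
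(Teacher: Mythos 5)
Your proposal follows essentially the same route as the paper: its proof likewise establishes the associative analogue of Lemma \ref{lem:prodnorm}, namely $\tilde{N}_e^{C_2}i_e^*\und{R}^\fix \Box \und{M}^\fix \cong \und{(R\otimes R^{op}\otimes M)}^\fix$ for $2$ invertible, runs the Section 5 bar-construction argument to get $\cL_{S^\sigma}^{C_2}(\und{R}^\fix) \cong B(\und{R}^\fix,\tilde{N}_e^{C_2}i_e^*\und{R}^\fix,\und{R}^\fix) \cong \und{B(R,R\otimes R^{op},R)}^\fix$, and then reuses the reflexive-homology comparison (Theorem \ref{thm:resolutions} and Corollary \ref{cor:hr}, already stated for associative algebras) and the $\Tor$-axiom arguments of Lemmata \ref{lem:h0}--\ref{lem:proj} with $R\otimes R$ replaced by $R\otimes R^{op}$. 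One notational caution: in two places you assert $\cL_{S^\sigma}^{C_2}(\und{R}^\fix)(C_2/C_2)\cong B^\Z(R,R\otimes R^{op},R)$ as simplicial abelian groups, whereas the $C_2/C_2$-level is the $C_2$-fixed points (equivalently coinvariants, since $2$ is invertible) of that bar construction, i.e.\ $\und{B^\Z(R,R\otimes R^{op},R)}^\fix(C_2/C_2)$ --- your own subsequent use of Lemmata \ref{lem:h0}--\ref{lem:proj} and of $B^\Z(R,R\otimes R^{op},R)^{C_2}$ shows this is a slip of notation rather than a gap in the argument.
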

\begin{proof}
We only point out where the differences to the proof in the
commutative case are. As in  Lemma \ref{lem:prodnorm}, we can show (by
literally using the same proof) that there is an isomorphism of
$C_2$-Mackey functors
\[ \tilde{N}_e^{C_2}i_e^* \und{R}^{\fix} \Box \und{M}^{\fix} \cong \und{(R \otimes R^{op} \otimes
  M)}^{\fix}, \]
 if $2$ is invertible in $R$ and
if $R$ is an
associative ring with anti-involution. 

The arguments in \S 5 go through with the difference that we have
to replace $R \otimes R$ by $R \otimes R^{op}$ and Theorem  \ref{thm:barfix}
gives an isomorphism  of $C_2$-Mackey functors
\[ \cL_{S^\sigma}^{C_2}(\Af) \cong B(\Af, \tilde{N}_e^{C_2}i_e^*\Af, \Af) \cong
  \und{B(R, R \otimes R^{op}, R)}^\fix. \]
Section 6 is
already formulated for associative algebras and also the
homological algebra arguments in section 7 go through but we have to replace $R
\otimes R$ by the enveloping algebra $R \otimes R^{op}$. 
\end{proof}

In the setting where we choose a commutative ring $k$ and $A$ is an
associative $k$-algebra with an anti-involution that fixes $k$, we first have to define
a relative analogue of the norm.

Note that the unit map $k \ra A$ induces a map of $C_2$-Green functors
$N_e^{C_2}i_e^*{\und{k}^c} \ra \tilde{N}_e^{C_2}i_e^*{\und{A}^\fix}$. 
\begin{defn}
  We define $\tilde{N}_e^{C_2, \und{k}^c}(\und{A}^\fix)$ as
  \[ \tilde{N}_e^{C_2, \und{k}^c}(\und{A}^\fix) :=
    \tilde{N}_e^{C_2}i_e^*{\und{A}^\fix}
    \Box_{N_e^{C_2}i_e^*{\und{k}^c}} \und{k}^c. \]
\end{defn}

With this we can define $\cL_{S^\sigma}^{C_2,\und{k}^c}(\und{A}^\fix)$ for an
associative $k$-algebra $A$ with anti-involution and obtain an isomorphism of 
simplicial $C_2$-Mackey functors
\[ \cL_{S^\sigma}^{C_2,\und{k}^c}(\und{A}^\fix) \cong B(\und{A}^\fix, \tilde{N}_e^{C_2, \und{k}^c}i_e^*(\und{A}^\fix), \und{A}^\fix).\]

We get an
analogue of Theorems  \ref{thm:lodayishrk} and \ref{thm:ihhk}. 

\begin{thm} \label{thm:anti-invalg}
Assume that $A$ is an associative $k$-algebra with a $k$-linear
anti-involution and that $2$ is 
invertible in $A$. If the underlying module of $A$ is flat over $k$,
then
\[ \ihh_*^k(A) \cong \pi_*(\cL^{C_2, \und{k}^c}_{S^\sigma}(\und{A}^\fix)(C_2/C_2))
  \cong \hr^{+,k}_*(A,A).   \] 
\end{thm}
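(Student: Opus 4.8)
The plan is to mirror the proof of Theorem \ref{thm:anti-invring}, replacing every absolute construction over $\Z$ by its relative version over $k$, and then to invoke the relative analogues of the structural results already established. First I would record that all the work from the previous section goes through verbatim with $R\otimes R$ replaced by the $k$-relative enveloping algebra $A\otimes_k A^{\op}$: in particular the pseudo norm-restriction term $\tilde{N}_e^{C_2}i_e^*(\und{A}^\fix)$ is defined using $A\otimes_k A^{\op}$ and $\Z\{A\}$ exactly as before (with all tensor products over $k$), and it carries an associative $C_2$-Green functor structure, with $\und{A}^\fix$ a bimodule over it, by the same calculations. The relative norm $\tilde{N}_e^{C_2,\und{k}^c}(\und{A}^\fix)$ is then the indicated relative box product, and the Loday construction $\cL_{S^\sigma}^{C_2,\und{k}^c}(\und{A}^\fix)$ is identified with the two-sided bar construction $B(\und{A}^\fix, \tilde{N}_e^{C_2,\und{k}^c}(\und{A}^\fix), \und{A}^\fix)$ as stated just above the theorem.

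Next I would establish the relative analogue of Lemma \ref{lem:prodnorm}, namely that for an involutive $A$-bimodule $M$ which is flat over $k$ and with $2$ invertible in $A$, there is an isomorphism of $C_2$-Mackey functors
\[ \tilde{N}_e^{C_2,\und{k}^c}(\und{A}^\fix)\Box_{\und{k}^c}\und{M}^\fix \cong \und{(A\otimes_k A^{\op}\otimes_k M)}^\fix, \]
the point being, exactly as in the commutative case handled in Theorem \ref{thm:relativeisgood} and Lemma \ref{lem:assemble}, that forming the relative box product over $\und{k}^c$ and then passing to fixed points agrees with forming the relative tensor over $k$ and then passing to fixed points, because with $2$ invertible the fixed-point functor is exact and commutes with the relevant coequalizers. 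Applying this levelwise to the bar construction gives an isomorphism of simplicial $C_2$-Mackey functors
\[ \cL_{S^\sigma}^{C_2,\und{k}^c}(\und{A}^\fix) \cong \und{B^k(A, A\otimes_k A^{\op}, A)}^\fix, \]
and with coefficients in $M$ the same argument yields $\cL_{S^\sigma}^{C_2,\und{k}^c}(\und{A}^\fix;\und{M}^\fix)\cong \und{B^k(A, A\otimes_k A^{\op}, M)}^\fix$.

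With this identification in hand, the two desired isomorphisms follow from results already proved. For the left-hand isomorphism $\ihh_*^k(A)\cong \pi_*(\cL^{C_2,\und{k}^c}_{S^\sigma}(\und{A}^\fix)(C_2/C_2))$ I would invoke Proposition \ref{prop:bistor}: its proof (via Lemmata \ref{lem:h0}, \ref{lem:ses}, \ref{lem:proj}) only uses associativity of $A$, flatness of $A$ over $k$, invertibility of $2$, and the description of $A^{ie}$ as the twisted group algebra $(A\otimes_k A^{\op})[C_2]$, so it applies to the associative algebra $A$ with the enveloping algebra $A\otimes_k A^{\op}$, giving $\pi_*\und{B^k(A, A\otimes_k A^{\op}, M)}^\fix(C_2/C_2)\cong \ihh_*^k(A;M)$, specialized to $M=A$. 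For the right-hand isomorphism $\pi_*(\cL^{C_2,\und{k}^c}_{S^\sigma}(\und{A}^\fix)(C_2/C_2))\cong \hr^{+,k}_*(A,A)$ I would use Corollary \ref{cor:hr}, whose proof rests on Theorem \ref{thm:resolutions} and Lemma \ref{lem:flatalsogood}, both stated for an associative $k$-algebra $A$ with anti-involution over a general commutative $k$; the $C_2$-fixed points of $B^k(A, A\otimes_k A^{\op}, A)$ compute $\hr^{+,k}_*(A,A)$ because $2$ is invertible and the fixed-point complex agrees with the coinvariant complex $\CH^k_*(A;A)/(1-r)$, which computes reflexive homology by \cite[Proposition 2.4]{graves}. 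The main obstacle is the bookkeeping in the first paragraph: one must check honestly that the verification that $\tilde{N}_e^{C_2}i_e^*(\und{A}^\fix)$ is an associative $C_2$-Green functor and that $\und{A}^\fix$ is a bimodule over it is insensitive to whether the ambient ground ring is $\Z$ or $k$ — i.e.\ that every tensor, every Weyl-class identification, and the associativity check survive base change to $k$ — together with the relative base-change statement for fixed points analogous to Theorem \ref{thm:relativeisgood}; once that is in place the rest is a formal assembly of already-proved results.
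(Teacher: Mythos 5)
Your proposal is correct and follows essentially the same route as the paper: identify the relative Loday construction (via the relative pseudo-norm) with the fixed-point Mackey functor of $B^k(A, A\otimes_k A^{\op}, A)$, then quote Corollary \ref{cor:hr} and Proposition \ref{prop:bistor} with $A\otimes_k A^{\op}$ in place of $A\otimes_k A$. The only cosmetic difference is that the paper justifies the relative identification by adapting the adjunction argument of Proposition \ref{prop:orbits} to associative rings with anti-involution, obtaining $\cL_{S^\sigma}^{C_2,\und{k}^c}(\und{A}^\fix)\cong\und{\cL_{S^\sigma}^k(A)}^\fix$, whereas you argue levelwise via a relative analogue of Lemma \ref{lem:prodnorm} together with exactness of fixed points when $2$ is invertible.
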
  

\begin{proof}
We have to adapt the statement and the proof of Proposition
\ref{prop:orbits} and 
claim that for an associative $k$-algebra $A$ with anti-involution we
obtain that  
\[  \tilde{N}_e^{C_2,\und{k}^c}i_e^*(\und{A}^\fix)  \cong \und{(A
    \otimes_k A^{op})}^\fix, \]  
where $C_2$ acts on $A \otimes_k A^{op}$ by $\tau(a\otimes b)=\bar b
\otimes \bar a$. 

The proof goes through, when we consider the adjunction between
the full subcategory of 
$C_2$-fixed point Mackey functors of associative rings with
anti-involution and the category of rings with anti-involution. Then
the proof of the adjunction can be copied. This 
yields an analogue of Theorem \ref{thm:relativeisgood} in the
associative setting. 
\[ \cL_{S^\sigma}^{C_2,\und{k}^c}(\und{A}^\fix) \cong
  \und{\cL_{S^\sigma}^k(A)}^\fix \] 

The other changes are similar to the absolute case of an associative
ring with anti-involution but of course now we have to replace $A \otimes_k A$ 
by the enveloping algebra $A \otimes_k A^{op}$.  

\end{proof}
\begin{bibdiv}
\begin{biblist}

\bib{akgh}{article}{
    author={Angelini-Knoll, Gabriel},
    author={Gerhardt, Teena},
    author={Hill, Michael}, 
    title={Real topological Hochschild homology via the norm and Real Witt vectors}, 
JOURNAL = {Adv. Math.},
    VOLUME = {482},
      YEAR = {2025},
    NUMBER = {part A},
     PAGES = {Paper No. 110568},
 }

\bib{akmp}{misc}{
  author={Angelini-Knoll, Gabriel},
    author={Merling, Mona},
    author={P\'eroux, Maximilien}, 
    title={Topological $\Delta G$-homology of rings with twisted $G$-action}, 
   note={preprint:  arXiv:2409.18187}, 
  }
 

 \bib{bghl}{article}{
  AUTHOR = {Blumberg, Andrew J.},
  author = {Gerhardt, Teena},
  author={Hill, Michael A.}, 
  author= {Lawson, Tyler},
     TITLE = {The {W}itt vectors for {G}reen functors},
   JOURNAL = {J. Algebra},
      VOLUME = {537},
      YEAR = {2019},
     PAGES = {197--244},
}


   \bib{braun}{article}{
author={Braun, Christopher}, 
 TITLE = {Involutive {$A_\infty$}-algebras and dihedral cohomology},
   JOURNAL = {J. Homotopy Relat. Struct.},
    VOLUME = {9},
      YEAR = {2014},
    NUMBER = {2},
     PAGES = {317--337},
  }

  \bib{costello}{article}{
AUTHOR = {Costello, Kevin},
     TITLE = {Topological conformal field theories and {C}alabi-{Y}au
              categories},
   JOURNAL = {Adv. Math.},
    VOLUME = {210},
      YEAR = {2007},
    NUMBER = {1},
     PAGES = {165--214},
  }

  \bib{do}{article}{
    AUTHOR = {Dotto, Emanuele},
    author = {Ogle, Crichton},
     TITLE = {{$K$}-theory of {H}ermitian {M}ackey functors, real traces,
              and assembly},
   JOURNAL = {Ann. K-Theory},
    VOLUME = {4},
      YEAR = {2019},
    NUMBER = {2},
     PAGES = {243--316},
  }

\bib{dmpr}{article}{
  AUTHOR = {Dotto, Emanuele},
  author = {Moi, Kristian},
  author= {Patchkoria, Irakli},
  author = {Reeh, Sune Precht},
     TITLE = {Real topological {H}ochschild homology},
   JOURNAL = {J. Eur. Math. Soc. (JEMS)},
    VOLUME = {23},
      YEAR = {2021},
    NUMBER = {1},
     PAGES = {63--152},
}

\bib{dk}{incollection}{
  AUTHOR = {Dyckerhoff, Tobias},
  author={Kapranov, Mikhail},
     TITLE = {Crossed simplicial groups and structured surfaces},
 BOOKTITLE = {Stacks and categories in geometry, topology, and algebra},
    SERIES = {Contemp. Math.},
    VOLUME = {643},
     PAGES = {37--110},
 PUBLISHER = {Amer. Math. Soc., Providence, RI},
      YEAR = {2015}, 
}

   \bib{fvg}{article}{
     AUTHOR = {Fern\`andez-Val\`encia, Rams\`es},
     author = {Giansiracusa, Jeffrey},
     TITLE = {On the {H}ochschild homology of involutive algebras},
   JOURNAL = {Glasg. Math. J.},
    VOLUME = {60},
      YEAR = {2018},
    NUMBER = {1},
     PAGES = {187--198},
  }

\bib{fl}{article}{
  AUTHOR = {Fiedorowicz, Zbigniew},
  author= {Loday, Jean-Louis},
     TITLE = {Crossed simplicial groups and their associated homology},
   JOURNAL = {Trans. Amer. Math. Soc.},
    VOLUME = {326},
      YEAR = {1991},
    NUMBER = {1},
     PAGES = {57--87},
   }

  \bib{graves}{article}{
 AUTHOR = {Graves, Daniel},
     TITLE = {Reflexive homology},
   JOURNAL = {Proceedings of the Royal Society of Edinburgh Section A:
     Mathematics}, 
   volume = {154},
   number = {5},
   year = {2024}, 
   pages = {1378--1405},
  }
  \bib{hw}{article}{
    AUTHOR = {Hahn, Jeremy},
    author={Wilson, Dylan},
     TITLE = {Real topological {H}ochschild homology and the {S}egal
              conjecture},
   JOURNAL = {Adv. Math.},
    VOLUME = {387},
      YEAR = {2021},
     PAGES = {Paper No. 107839, 17},
  }

  \bib{hill-hopkins}{article}{
    author={Hill, Michael},
    author={Hopkins, Michael},
    title={Equivariant symmetric monoidal structures}, 
note={preprint: arXiv:1610.03114}, 
  }


\bib{hm}{article}{
  AUTHOR = {Hill, Michael A.},
  author= {Mazur, Kristen},
     TITLE = {An equivariant tensor product on {M}ackey functors},
   JOURNAL = {J. Pure Appl. Algebra},
    VOLUME = {223},
      YEAR = {2019},
    NUMBER = {12},
     PAGES = {5310--5345},
   }

\bib{horev}{misc}{
  AUTHOR={Horev, Asaf}, 
TITLE={Genuine equivariant factorization homology}, 
NOTE={preprint, arXiv:1910.07226}, 
}

\bib{hoyer}{misc}{
AUTHOR={Hoyer, Rolf},
TITLE={Two topics in stable homotopy theory},
YEAR={2014},
NOTE={Dissertation, the University of Chicago}, }

\bib{kp}{article}{
  AUTHOR = {Koam, Ali N. A.},
  author={Pirashvili, Teimuraz},
     TITLE = {Cohomology of oriented algebras},
   JOURNAL = {Comm. Algebra},
      VOLUME = {46},
      YEAR = {2018},
    NUMBER = {7},
    PAGES = {2947--2963},
  }

\bib{lewis}{article}{
author={Lewis, Chloe}, 
title={Computational tools for Real topological Hochschild homology}, 
note={Dissertation, University of Michigan, available at \url{https://d.lib.msu.edu/etd/51050}}, 
year={2023}, 
}

  \bib{lrz}{article}{
      AUTHOR = {Lindenstrauss, Ayelet},
      AUTHOR = {Richter, Birgit},
      AUTHOR = {Zou, Foling},
      TITLE ={Examples of \'etale extensions of Green functors},
JOURNAL = {Proc. Amer. Math. Soc. Ser. B},
    VOLUME = {11},
      YEAR = {2024},
     PAGES = {287--303},
    }

\bib{lrz-gloday}{article}{
author={Lindenstrauss, Ayelet},
    author={Richter, Birgit},
    author={Zou, Foling}, 
    title={Loday Constructions of Tambara functors}, 
   journal={Journal of Algebra}, 
   year = {2025},
   pages = {278--306},
   volume = {683},
 }

    \bib{mazur}{misc}{
      AUTHOR={Mazur, Kristen},
      TITLE={On the structure of Mackey functors and Tambara functors}, 
      YEAR = {2013}, 
      NOTE={Dissertation, University of Virginia}, 
}

\bib{merling}{article}{
author={Merling, Mona}, 
TITLE = {Equivariant algebraic {K}-theory of {$G$}-rings},
   JOURNAL = {Math. Z.},
    VOLUME = {285},
      YEAR = {2017},
    NUMBER = {3-4},
    PAGES = {1205--1248},
  }

  \bib{ssw}{misc}{
title={Algebraically Closed Fields in Equivariant Algebra}, 
author={Schuchardt, Jason},
author={Spitz, Ben},
author= {Wisdom, Noah},  
note={preprint:  arXiv:2505.05539}, 
  }

\bib{segal}{article}{
    AUTHOR = {Segal, Graeme},
     TITLE = {Configuration-spaces and iterated loop-spaces},
   JOURNAL = {Invent. Math.},
    VOLUME = {21},
      YEAR = {1973},
     PAGES = {213--221},
}



   \end{biblist}
\end{bibdiv}

\end{document}